\documentclass[11pt,reqno]{amsart}
\pdfoutput=1

\usepackage{amsmath}
\usepackage{amsfonts}
\usepackage{amssymb}
\usepackage{enumerate}
\usepackage{amstext}
\usepackage{amsbsy}
\usepackage{amsopn}
\usepackage{bbm,amsthm}
\usepackage{amscd}
\usepackage[pdftex]{color}
\usepackage{amsxtra}
\usepackage{upref}
\usepackage{epstopdf}
\usepackage{graphicx,color}
\usepackage{hyperref}
\usepackage{longtable}
\usepackage{graphicx}
\usepackage[english]{babel}
\usepackage{soul}
\usepackage{array}
\usepackage{todonotes}

\DeclareFontFamily{OML}{rsfs}{\skewchar\font'177}
\DeclareFontShape{OML}{rsfs}{m}{n}{ <5> <6> rsfs5 <7> <8> <9> rsfs7
  <10> <10.95> <12> <14.4> <17.28> <20.74> <24.88> rsfs10 }{}
\DeclareMathAlphabet{\mathfs}{OML}{rsfs}{m}{n}

\newtheorem{theorem}{Theorem}
\newtheorem*{maintheorem}{Main Theorem}

\newtheorem{lemma}[theorem]{Lemma}
\newtheorem{proposition}[theorem]{Proposition}
\newtheorem{corollary}[theorem]{Corollary}

\theoremstyle{definition}

\theoremstyle{remark}
\newtheorem{remark}[theorem]{\bf Remark}

\numberwithin{equation}{section}
\numberwithin{theorem}{section}

\newcommand{\intav}[1]{\mathchoice {\mathop{\vrule width 6pt height 3 pt depth  -2.5pt
\kern -8pt \intop}\nolimits_{\kern -6pt#1}} {\mathop{\vrule width
5pt height 3  pt depth -2.6pt \kern -6pt \intop}\nolimits_{#1}}
{\mathop{\vrule width 5pt height 3 pt depth -2.6pt \kern -6pt
\intop}\nolimits_{#1}} {\mathop{\vrule width 5pt height 3 pt depth
-2.6pt \kern -6pt \intop}\nolimits_{#1}}}

\newcommand{\intavl}[1]{\mathchoice {\mathop{\vrule width 6pt height 3 pt depth  -2.5pt
\kern -8pt \intop}\limits_{\kern -6pt#1}} {\mathop{\vrule width 5pt
height 3  pt depth -2.6pt \kern -6pt \intop}\nolimits_{#1}}
{\mathop{\vrule width 5pt height 3 pt depth -2.6pt \kern -6pt
\intop}\nolimits_{#1}} {\mathop{\vrule width 5pt height 3 pt depth
-2.6pt \kern -6pt \intop}\nolimits_{#1}}}

\newcommand{\un}{\underline}

\newcommand{\ve}{\varepsilon}

\newcommand{\wh}{\widehat}
\newcommand{\vf}{\varphi}

\newcommand{\R}{\mathbb{R}}
\newcommand{\N}{\mathbb{N}}

\newcommand{\Z}{\mathbb{Z}}

\newcommand{\Hol}[1]{{\textrm{H\"ol}}_{#1}}

\renewcommand{\exp}[1]{{\rm exp}_{#1}}

\newcommand{\Sas}{d_{\rm Sas}}

\newcommand{\nuh}{{\rm NUH}}

\newcommand{\g}{{\overline g}}
\newcommand{\dd}{{\overline d}}
\newcommand{\mm}{M^*}
\newcommand{\reg}{M\backslash{\rm Sing}}
\newcommand{\sing}{{\rm Sing}}
\newcommand{\Sh}{{\widehat\Lambda}}

\newcommand{\vertiii}[1]{{\left\vert\kern-0.2ex\left\vert\kern-0.2ex\left\vert #1 
    \right\vert\kern-0.2ex\right\vert\kern-0.2ex\right\vert}}
\newcommand{\hatg}{{\widehat g}}

\title[Symbolic dynamics for non-uniformly hyperbolic flows]{Symbolic dynamics for \\ non-uniformly hyperbolic flows}
\author{J\'er\^ome Buzzi, Sylvain Crovisier, Yuri Lima, Chiyi Luo, and Dawei Yang}
\date{\today}
\keywords{}

\begin{document}

\setcounter{tocdepth}{1}

\maketitle

\begin{abstract}
We construct symbolic dynamics for non-uniformly hyperbolic flows, in any dimension, possibly with fixed points.
More precisely, for each $\chi>0$, we code a set which has full measure for every $\chi$-hyperbolic invariant probability measure that gives zero mass to the set of fixed points. 
As a main application, we prove that a three dimensional $C^\infty$ flow with positive topological entropy on a closed manifold has finitely many ergodic measures of maximal entropy. 
For flows in any dimension, we also provide applications to the number of periodic orbits and to the Bernoulli property for equilibrium states of Hölder continuous potentials. 

The main technical result of this paper is a new method for handling singularities of vector fields by modifying the Riemannian metric. 
This technique is analogous to a blowup, which allows many results for nonsingular vector fields to be directly applied to vector fields with singularities.
\end{abstract}

\section{Introduction}

In 2013, Sarig constructed countable Markov partitions for $C^{1+\beta}$ ($\beta>0$) surface diffeomorphisms $f$ with positive topological entropy \cite{Sarig-JAMS}.
These partitions define finite-to-one symbolic extensions of $f$ by topological Markov shifts (TMS).
Representing a smooth dynamical system through a symbolic model allows its statistical analysis, including questions on the number and properties of measures of maximal entropy.
Such symbolic representations have played a central role in several recent developments, for instance in the proof by Buzzi, Crovisier and Sarig that every $C^\infty$ surface diffeomorphism with positive topological entropy is {\em strongly positively recurrent} (SPR) \cite{BCS-SPR}.
It is worth emphasizing that, in the setting of surface diffeomorphisms, positive topological entropy places us in the context of {\em non-uniform hyperbolicity}.

When studying the existence of symbolic representations for flows, a natural distinction arises according to whether or not the flow admits fixed points (also called singularities).
In the absence of fixed points -- equivalently, when the flow has positive speed, or when the vector field generating the flow has no zeroes -- the construction of countable Markov partitions has been successfully implemented for non-uniformly hyperbolic flows by Lima and Sarig \cite{Lima-Sarig}, Buzzi, Crovisier and Lima  \cite{BCL23} and Lima, Mongez and Nascimento \cite{LMN}. 
An ingredient used in this context is that flow boxes have uniform dynamical and geometrical properties.

In this paper, we construct Markov partitions for flows {\em with fixed points}, in any dimension.
Allowing the vector field to have zeroes introduces a major difficulty, since the aforementioned uniform dynamical and geometrical properties of flow boxes used in \cite{Lima-Sarig,BCL23,LMN} are no longer available.
Before describing our approach to this issue, let us state the main theorem.
We need some definitions.

Let $M$ be a $C^\infty$ closed Riemannian manifold, let $X$ be a $C^{1+\beta}$ vector field on $M$ with $\beta>0$, and let $\vf=\{\vf^t\}_{t\in\R}$ be the flow generated by $X$. 
Let $\chi>0$.
\medskip

\noindent
\label{def-chi-hyperbolic}
{\sc $\chi$-hyperbolic measure:} A $\vf$-invariant probability measure $\mu$ on $M$ is {\em $\chi$-hyperbolic} if $\mu$-a.e. point has well-defined Lyapunov exponents and if all those along directions transverse to $X$ lie outside of the interval $[-\chi,\chi]$.

\medskip
Let $\sing=\{x\in M:X(x)=0\}$ be the set of zeroes of $X$, which we call the {\em singular set}.
We are interested in coding $\chi$-hyperbolic measures that give measure zero to $\sing$.
The main result of this paper is the following theorem. 
See below for definitions.

\begin{maintheorem}\label{maintheorem}
Let $X$ be a $C^{1+\beta}$ vector field ($\beta>0$) on a $C^\infty$ closed Riemannian manifold $M$. 
For each $\chi>0$, there exist a locally compact topological Markov flow $(\Sigma_r,\sigma_r)$ and a map $\pi_r:\Sigma_r\to M$ which satisfy $\pi_r\circ \sigma_r^t=\vf^t\circ\pi_r$ for all $t\in\R$, such that:
\begin{enumerate}[{\rm (1)}]
\item The roof function $r$ is H\"older continuous and $\pi_r$ is $X$-scaled H\"older continuous.
\item $\pi_r[\Sigma_r^\#]$ has full measure for every $\chi$-hyperbolic measure on $M$ with $\mu(\sing)=0$.
\item $\pi_r$ is finite-to-one on $\Sigma_r^\#$, i.e. $\operatorname{Card}(\{z\in \Sigma_r^\#:\pi_r(z)=x\})<\infty$, for all $x\in \pi_r[\Sigma_r^\#]$.
\end{enumerate}
\end{maintheorem}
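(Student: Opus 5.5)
The plan is to reduce to the nonsingular case treated in \cite{Lima-Sarig,BCL23,LMN}, by modifying the Riemannian metric on $\reg$ in the spirit of a blowup. Concretely, I will define on $\reg$ a new metric $\g=\frac{g}{|X|^2}$, or a smoothed version $\g_x=\rho(x)^{-2}g_x$ with $\rho$ comparable to $|X(x)|$. The properties I want for $\g$ are:
\begin{enumerate}[(a)]
\item $X$ has unit (or uniformly bounded-away-from-zero-and-infinity) $\g$-norm;
\item $(\reg,\g)$ is complete;
\item it has bounded geometry: the injectivity radius is bounded below and the exponential maps are uniformly $C^{1+\beta'}$ on balls of fixed radius.
\end{enumerate}
Property (c) holds because a $\g$-ball of radius $r_0$ around $x$ is a $g$-ball of radius about $r_0|X(x)|$, on which $|X|$ varies by a bounded factor since $X$ is Lipschitz. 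Rescaling the chart by $|X(x)|$ identifies each such ball with a unit Euclidean ball. I will then replace $X$ by the rescaled vector field $\hatg$-normalized field $\widehat X=X/|X|$, or rather keep $X$ but work with the time-$t$ maps in these rescaled charts.

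The key step is to check that, in the rescaled charts, the flow $\vf^t$ for $|t|\le 1$ has uniformly bounded $C^{1+\beta}$ norms. I will show that the rescaled map $y\mapsto |X(\vf^t x)|^{-1}\exp{\vf^t x}^{-1}\vf^t(\exp{x}(|X(x)|y))$ is $C^{1+\beta}$ close to the linear map $d\vf^t$, with constants independent of $x$. This holds because $|X(\vf^tx)|/|X(x)|\in[e^{-L},e^{L}]$ with $L=\mathrm{Lip}(X)$. The H\"older seminorm of the derivative scales favorably: rescaling by $|X(x)|$ multiplies it by $|X(x)|^\beta\le C$. Once this is established, the flow on $(\reg,\g)$ is a nonsingular $C^{1+\beta}$ flow on a manifold of bounded geometry with uniform flow boxes. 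I then plan to check that Lyapunov exponents are unchanged for $\mu$ with $\mu(\sing)=0$. The conformal factor $|X|$ along orbits contributes $\frac1t\log\frac{|X(\vf^tx)|}{|X(x)|}$, which tends to $0$ a.e. by Poincar\'e recurrence and the fact that $\log|X|$ has a bounded cocycle increment. As a result, $\chi$-hyperbolic measures for $g$ are $\chi$-hyperbolic for $\g$.

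With these uniform estimates, I will rerun the construction of \cite{BCL23} (Pesin charts on transverse sections, $\ve$-double charts, the graph transform, coarse graining, and the Bowen--Sinai refinement), working in $(\reg,\g)$. That construction needs compactness only through uniform bounds on geometry and derivatives, together with local compactness. The output is a locally compact TMF $(\Sigma_r,\sigma_r)$ coding a set of full measure for every $\chi$-hyperbolic $\mu$ with $\mu(\sing)=0$, and finite-to-one on $\Sigma_r^\#$; this gives items (2) and (3). For item (1), the roof function is H\"older because the return times are H\"older in $\g$-coordinates, and these coordinates are canonical in the symbolic metric. The projection $\pi_r$ is H\"older with respect to $\g$. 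Since $d_{\g}$ locally equals $d_g/|X|$, this is exactly the $X$-scaled H\"older continuity for $g$.

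The main obstacle I expect is the transfer of Pesin theory. Non-compactness of $(\reg,\g)$ means the countable coverings and the discreteness and inverse-theorem arguments must avoid any use of compactness. I will handle this by localizing the parameters $q(x)$ to also satisfy $q(x)\le$ an $\ve$-tempered function of $d_{\g}$-injectivity data. Since the injectivity data is uniformly bounded, this reduces to the standard temperedness of the Pesin parameter. Precompactness of $\ve$-double charts with parameters bounded below is then recovered in the rescaled charts, via a countable exhaustion of $\reg$ by compact sets $\{|X|\ge 1/n\}$.
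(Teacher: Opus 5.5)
Your architecture is the paper's: a conformal blow-up $\g=g/\rho^2$ on $\reg$ with $\rho$ comparable to $\|X\|$, and invariance of Lyapunov exponents by recurrence. It also matches in rerunning \cite{BCL23,LMN} on the resulting complete noncompact manifold with an exhaustion, and in transferring H\"older continuity through $d\approx\|X\|\,\dd$.

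\textbf{The gap is property (c), bounded geometry, which is the technical core of the paper.}

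\begin{itemize}
\item \emph{Your justification is only $C^0$.} You argue that $\|X\|$ varies by a bounded factor on $g$-balls of radius about $r_0\|X(x)\|$. After rescaling, this only makes the metric on the unit ball $C^0$-comparable to a Euclidean one. The injectivity radius and uniform $C^2$ bounds on exponential maps depend on derivatives of the metric. Curvature involves second derivatives of the conformal factor, so a factor pinched between two constants but oscillating at small scales can have tiny injectivity radius.
\item \emph{The choice $\rho=\|X\|$ fails outright.} Since $X$ is only $C^{1+\beta}$, $g/\|X\|^2$ is only $C^{1+\beta}$ and its Christoffel symbols are only $C^\beta$. Geodesics need not even be unique.
\item \emph{An unspecified ``smoothed version'' is not enough.} The smoothing must be done at scale $\|X\|$ itself, with scale-invariant derivative control.
\end{itemize}

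What is needed is a $C^\infty$ function $f$ with $\frac12\le f/\|X\|\le 2$ and $f^{k-1}\|D^kf\|\le C_k$. The paper obtains and uses it in three steps:
\begin{itemize}
\item Proposition~\ref{lemma-f} builds $f$ from a Besicovitch cover by balls $B(x,\|X(x)\|/(20L))$ and a partition of unity whose bumps are rescaled to these radii.
\item Proposition~\ref{prop-derivative-new-metrics} shows that, after the homothety of ratio $\ve f(x_0)$, the coefficients $\hatg_{ij}(y)=(f(0)/f(x))^2g_{ij}(x)$ of $g/(\ve f)^2$ have $k$-th derivatives $O(\ve^k)$.
\item Proposition~\ref{prop-inj-radius} deduces a positive injectivity radius by comparing geodesics with straight lines (Gr\"onwall).
\end{itemize}

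Note also that your key step uses rescaled $g$-charts $y\mapsto\exp{x}(\|X(x)\|y)$ rather than $\g$-exponential maps. That is Liao's route, as in \cite{Li-Liu}, and it could bypass bounded geometry of $\g$. But then \cite{BCL23,LMN} cannot be quoted as is: you would have to verify their chart axioms yourself, including transition maps between charts at different scales.

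A smaller point concerns your handling of noncompactness. Capping $q(x)$ by tempered injectivity data is vacuous, since the injectivity radius of $\g$ is uniformly bounded below. What noncompactness actually requires is the following.
\begin{itemize}
\item \emph{A locally finite global Poincar\'e section.} In \cite{Lima-Sarig} the range of displacements depends on the total number of flow boxes. This must be replaced by the local multiplicity of a cover by $\dd$-balls (Proposition~\ref{prop-sections}).
\item \emph{Discreteness of the alphabet restricted to double charts whose centers lie in a bounded region of $(\mm,\dd)$.} Your exhaustion by the compact sets $\{\|X\|\ge 1/n\}$ does provide this.
\item \emph{A recurrence condition in the definition of $\nuh^\#$ requiring returns to a compact part of $\mm$.} This keeps $\pi[\Sigma^\#]\subset\nuh^\#$.
\end{itemize}
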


A more precise version of the Main Theorem is stated as Theorem~\ref{t.main.singular}. 
Observe that all $\chi$-hyperbolic measures that do not give mass to $\sing$ are coded. 
Under the assumption that $\mu(\sing)=0$, recurrence implies that the Lyapunov exponent in the flow direction is zero.

A topological Markov flow is the unit speed vertical flow on a suspension space whose basis is a topological Markov shift $\Sigma$ and whose roof function $r$ is continuous and uniformly bounded away from zero and infinity.
We endow $(\Sigma_r,\sigma_r)$ with a natural metric $d_r$, called the {\em Bowen-Walters metric}, that extends a natural metric $d$ on $\Sigma$ and that makes $\sigma_r$ a continuous flow. 
For this metric, $r$ and $\pi_r$ are H\"older continuous. 
Actually, the presence of fixed points improves the H\"older regularity of $\pi_r$, as we now explain. 
We say that $\pi_r$ is {\em $X$-scaled H\"older continuous} if there are $\kappa\in(0,1)$ and $C>0$ such that
$$ d(\pi_r(z),\pi_r(w))\leq C\|X(\pi_r(z))\|d_r(z,w)^\kappa $$
for every $z=(v,t),w=(u,s)\in\Sigma_r$ with $v,u$ are in the same cylinder set.

The set $\Sigma_r^\#$ is the {\em regular} set of $(\Sigma_r,\sigma_r)$, consisting of all elements of $\Sigma_r$ for which the symbolic coordinate has a symbol repeating infinitely often in the future and a symbol repeating infinitely often in the past. 
See Section \ref{s.notation} for the precise definitions.

\subsection{Scheme of proof}\label{section-scheme}
Our main theorem will follow from two constructions:
\begin{enumerate}[(1)]
    \item a change of metric making the flow nonsingular and noncompact but with some estimates which we call ``tame";
    \item a coding theorem applying to such tame settings.
\end{enumerate}

Let $g$ be the Riemannian metric on $M$, and let $\mm=\reg$. 
The first construction introduces a rescaled metric $\g$ on $\mm$ such that $(\mm,\g)$ is a $C^\infty$ complete Riemannian manifold with controlled geometry and the restriction of $X$ to $\mm$ is a {\em non-singular} flow with uniformly bounded $C^{1+\beta}$ norm.
The metric is defined by 
$$
\g=\frac{g}{\ve^2 f^2}
$$
where $f:\mm\to (0,+\infty)$ is a carefully chosen positive and bounded $C^\infty$ function, comparable to $\|X\|$, and $\ve>0$ is small. 
For the precise statement, we need two definitions.

\medskip
\label{def-tame-manifold}
\noindent
{\sc Tame manifold:}
A $C^\infty$ Riemannian manifold $(N,g_{N})$ is called {\em tame} if:
\begin{enumerate}[{\rm (1)}]
\item it is complete;
\item $\|g_N\|_{C^2}<\infty$;
\item its injectivity radius is positive.
\end{enumerate}
In practice, we verify condition (2) by estimating derivatives of the coefficients of $g_N$.
\medskip

\noindent
{\sc Tame vector field:} A $C^{1+\beta}$ vector field $X$ on a $C^\infty$ Riemannian manifold $(N,g_{N})$ is {\em tame} if:
\begin{enumerate}[{\rm (1)}]
\item $0<\inf_{x\in N} \|X(x)\|\leq \sup_{x\in N}\|X(x)\|<\infty$;
\item $DX$ is uniformly bounded;
\item there are constants $r,C>0$ such that $DX$ has $\beta$-H\"older norm bounded by $C$ inside any ball of radius $r$.   
\end{enumerate}

Now we are ready to state the theorems performing the two constructions. Denote the restriction of $X$ to $\mm$ also by $X$.

\begin{theorem}\label{theorem-metric}
Let $X$ be a $C^{1+\beta}$ vector field ($\beta>0$) on a $C^\infty$ closed Riemannian manifold $(M,g)$, and let $\mm=M\setminus{\rm Sing}$. 
There is $f:\mm\to(0,+\infty)$ a $C^\infty$ bounded function with 
$$
\frac{1}{2}\leq \frac{f(x)}{\|X(x)\|}\leq 2,\ \text{ for all }x\in\mm,
$$
such that for all $\ve>0$ small enough the metric $\g=g/(\ve f)^2$ satisfies:
\begin{enumerate}[{\rm (1)}]
\item $(\mm,\g)$ is a {\em tame} $C^\infty$ Riemannian manifold and
there is $C>0$ such that
$$ d(x,y)\leq C\|X(x)\|\dd(x,y),\ \text{ for all $x,y\in\mm$ s.t. $d(x,y)<C^{-1}\|X(x)\|$}. $$
\item $X$ is a {\em tame} $C^{1+\beta}$ vector field on $(\mm,\g)$.
\end{enumerate}
\end{theorem}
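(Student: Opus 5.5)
The plan has three steps: build a smooth version $f$ of $\|X\|$ whose derivatives are controlled in the natural scale; read off the tameness of $\g$ by rescaling balls of $g$-radius comparable to $\ve f(x)$ to unit size; and then check tameness of $X$ in the rescaled charts.

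\textbf{Step 1: construction of $f$.} The function $x\mapsto\|X(x)\|$ is only Lipschitz, since $X$ is $C^{1+\beta}$. I will build a $C^\infty$ function $f$ on $\mm$ with $\tfrac12\le f/\|X\|\le 2$ and, for $k=1,2,3$,
$$|\nabla^k f|_g\le C_k f^{1-k}.$$
The construction uses a Whitney-type partition of unity adapted to the scale $\|X\|$. Take a Whitney covering of the open set $\mm$ by $g$-balls $B(x_i,c\|X(x_i)\|)$, with bounded overlap. Because $\|X\|$ is Lipschitz with constant $L=\|DX\|_\infty$, choosing $c\ll 1/L$ makes $\|X\|$ vary by a factor at most $1+O(cL)$ on each ball. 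Take bump functions $\psi_i$ with $|\nabla^k\psi_i|\lesssim (c\|X(x_i)\|)^{-k}$, normalize them, and set
$$f=\sum_i \|X(x_i)\|\,\psi_i.$$
The required bounds then follow by differentiating this sum. Boundedness of $f$ is immediate.

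\textbf{Step 2: tameness of $(\mm,\g)$.} Near $x\in\mm$, fix a $g$-normal chart. Rescale the ball $B_g(x,\rho f(x))$ by the factor $(\ve f(x))^{-1}$, where $\rho$ is small and fixed. In the rescaled coordinates $u$,
$$\g_u=\frac{f(x)^2}{f(x+\ve f(x)u)^2}\,g_{x+\ve f(x)u}.$$
By Step 1, the ratio $f(x)/f(\cdot)$ has $u$-derivatives of order $k$ bounded by $C\ve^k$, and $g$ in these coordinates is $C^\infty$-close to Euclidean. So on a $\g$-ball of radius about $\rho/\ve$, the metric $\g$ is $C^2$-close to the Euclidean metric. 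This gives condition (2) and a uniform lower bound on the injectivity radius, which is condition (3).

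For completeness, condition (1), I will show that $\dd$-distance to $\sing$ is infinite. Along any curve $\gamma$ approaching $\sing$, the Lipschitz bound gives $f(\gamma(s))\lesssim L\,d(\gamma(s),\sing)$. Hence the $\g$-length is at least
$$\int\frac{|\dot\gamma|}{\ve L\, d(\gamma,\sing)}=\infty,$$
since $\int dr/r$ diverges. Bounded $\dd$-balls therefore stay a positive $g$-distance from $\sing$ and are relatively compact.

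The comparison estimate in (1) comes from the same local picture. If $d(x,y)<C^{-1}\|X(x)\|$, then $f\ge c\|X(x)\|$ along the $g$-geodesic from $x$ to $y$. Integrating gives $\dd(x,y)\gtrsim d(x,y)/(\ve\|X(x)\|)$, which is the claimed inequality after adjusting $C$ (which may depend on $\ve$).

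\textbf{Step 3: tameness of $X$.} The $\g$-norm of $X$ is $\|X\|_g/(\ve f)$, which lies in $[1/(2\ve),2/\ve]$; this is condition (1). For conditions (2) and (3), work in the rescaled chart at $x$. There the vector field is
$$\tilde X(u)=\frac{X(x+\ve f(x)u)}{\ve f(x)}.$$
Its derivative is $D_u\tilde X=DX$ evaluated at the corresponding point, so it is bounded by $\|DX\|_\infty$. Its $\beta$-Hölder constant in $u$ is
$$(\ve f(x))^{\beta}\,\mathrm{Hol}_\beta(DX),$$
which is small. The covariant derivative with respect to $\g$ differs from $D_u$ by Christoffel symbols of $\g$ in these coordinates, which are $O(\ve)$, multiplied by $|\tilde X|\sim 1/\ve$. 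This product is $O(1)$, and its Hölder norm is controlled by the $C^2$ bound from Step 2. This yields conditions (2) and (3).

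The main obstacle is Step 1 together with the Christoffel-symbol bookkeeping in Step 3. The derivative bounds on $f$ must be sharp, of the form $f^{1-k}$, uniformly up to $\sing$. This matters because $X$ itself is only $C^{1+\beta}$, so no smoothness of $\|X\|$ beyond Lipschitz is available. That is why the Whitney-scale mollification is essential, rather than simply smoothing $\|X\|$ directly.
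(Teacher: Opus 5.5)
Your proposal is correct and follows essentially the same route as the paper: a partition of unity adapted to the scale $\|X\|$ (the paper uses a Besicovitch cover by balls of radius $\|X(x)\|/(20L)$) giving $f^{k-1}\|D^kf\|\le C_k$, then the homothety of ratio $\ve f(x)$ showing that the $k$-th derivatives of the metric coefficients are $O(\ve^k)$, completeness from the divergence of $\int dr/r$, injectivity radius from near-flatness (which the paper makes precise by an ODE/Gr\"onwall comparison of geodesics with straight lines), and the scale-invariance of $DX$ with Hölder constant improved to $(\ve f(x))^\beta$. Your explicit Christoffel-symbol check that the $\g$-covariant derivative of $X$ stays bounded is a detail the paper glosses over by computing $\vertiii{DX_x}_\ve=\|DX_x\|$ in charts, so it adds rigor rather than changing the approach.
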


Above, $\dd$ is the distance induced by $\g$.
The proof of Theorem \ref{theorem-metric} is in Section \ref{section-theorem-metric}.

Our second construction generalizes the symbolic dynamics of  \cite{BCL23,LMN} to this possibly noncompact setting. 
We have the following general coding theorem for tame manifolds and flows.Recall that $\chi$-hyperbolicity  has been defined on page~\pageref{def-chi-hyperbolic}.

\begin{theorem}\label{theorem-coding}
Let $X$ be a {\em tame} $C^{1+\beta}$ vector field ($\beta>0$) on a {\em tame} $C^\infty$ Riemannian manifold $(\mm,\g)$.
For each $\chi>0$, there exist a locally compact topological Markov flow $(\Sigma_r,\sigma_r)$ and a map $\pi_r:\Sigma_r\to \mm$ such that $\pi_r\circ \sigma_r^t=\vf^t\circ\pi_r$ for all $t\in\R$, satisfying:
\begin{enumerate}[{\rm (1)}]
\item The roof function $r$ and the projection $\pi_r$ are H\"older continuous.
\item $\pi_r[\Sigma_r^\#]$ has full measure for every $\chi$-hyperbolic probability measure on $\mm$.
\item $\pi_r$ is finite-to-one on $\Sigma_r^\#$, i.e. $\operatorname{Card}(\{z\in \Sigma_r^\#:\pi_r(z)=x\})<\infty$, for all $x\in \pi_r[\Sigma_r^\#]$.
\end{enumerate}
\end{theorem}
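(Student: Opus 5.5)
The plan is to run the Lima--Sarig/BCL23/LMN coding scheme for flows, replacing every use of compactness of the ambient manifold by the uniform bounds in the definitions of tame manifold and tame vector field. The approach has four stages.

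\emph{Stage 1: local sections and the Poincar\'e map.} Using the positive injectivity radius and $\|\g\|_{C^2}<\infty$, the exponential maps $\exp{x}$ are uniformly $C^2$ diffeomorphisms on balls of a fixed radius $\mathfrak r$, with uniform bounds on the distortion of the metric. Since $\|X\|$ is bounded above and below and $DX$ is bounded and uniformly $\beta$-H\"older at scale $r$, the flow boxes have uniform size. So, as in \cite{Lima-Sarig,BCL23}, we can pick a countable family of transverse discs (global section) $\Lambda$ with uniformly bounded geometry, and a return time bounded between two positive constants. The Poincar\'e return map $f_\Lambda$ is then uniformly $C^{1+\beta}$ in charts.

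Compactness of $M$ was used in \cite{BCL23,LMN} mainly to make $\Lambda$ a finite union of discs. Here we only need $\Lambda$ to be locally finite, with every bound uniform, and this follows from tameness: cover $\mm$ by a maximal $\mathfrak r$-separated set, which has bounded local multiplicity thanks to the curvature bounds coming from $\|\g\|_{C^2}$.

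\emph{Stage 2: Pesin charts.} Next we build Pesin charts for the $\chi$-hyperbolic points of $f_\Lambda$ and $\ve$-double charts, as in \cite{Sarig-JAMS,BCL23}. The graph-transform and overlap estimates only involve the $C^{1+\beta}$ norms in exponential charts, so they carry over verbatim.

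\emph{Stage 3: countable coarse-graining.} The key step, and the one I expect to be the main obstacle, is the discreteness/coarse-graining lemma. We must show there is a countable collection $\mathfrak A$ of double charts such that:
\begin{enumerate}[(a)]
\item for each bounded region and each lower bound on the chart sizes, only finitely many charts are involved (local finiteness);
\item every $\chi$-hyperbolic recurrent orbit is shadowed by a regular chain in $\mathfrak A$.
\end{enumerate}
In the compact case this comes from precompactness of the parameter space $\{(x,C(x),Q(x))\}$. Here I would first exhaust $\mm$ by compact sets $K_n$ (closed balls, which are compact by completeness and Hopf--Rinow). On each $K_n$ I apply the compact argument with a finite $\epsilon$-net depending on $n$ and on the chart size $e^{-\ell}$. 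The union over $n$ and $\ell$ is countable. Local compactness of $\Sigma$ then follows because each vertex has finitely many successors; that count is controlled by local finiteness near the vertex, combined with the bounded displacement of the flow over one return time.

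For item (b), any invariant probability measure on $\mm$ is tight, so almost every point returns infinitely often to some $K_n$ with good parameters. This gives the relevance and recurrence needed to hit $\Sigma^\#$, which yields item (2).

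\emph{Stage 4: Bowen--Sinai refinement and conclusion.} We then apply the Bowen--Sinai refinement to the resulting cover by rectangles. As in \cite{BCL23}, this produces a countable Markov partition and a finite-to-one coding on $\Sigma^\#$; the Bowen relation argument is local, so it is unaffected by noncompactness. Finally we suspend with roof function $r$ equal to the return time composed with $\pi$. Its H\"older continuity, and that of $\pi_r$ for the Bowen--Walters metric, follow from the uniform $C^{1+\beta}$ bounds exactly as in \cite{LMN}. This gives items (1) and (3).
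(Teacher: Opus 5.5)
Your global architecture matches the paper's:
\begin{itemize}
\item a locally finite section, built using the local multiplicity of a cover by balls of fixed radius instead of the total number of flow boxes;
\item an exhaustion or distance parameter in the coarse graining, so that discreteness means finitely many charts with $p^s,p^u>t$ in a bounded region;
\item Poincar\'e recurrence to a compact set at times of large $q$, to land in $\Sigma^\#$;
\item a purely local Bowen--Sinai refinement.
\end{itemize}

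\textbf{The failing step.} The step that fails as written is in Stages 1--2. The return map $f_\Lambda$ of a section made of countably many discs is not uniformly $C^{1+\beta}$: it is discontinuous at points whose orbit hits the boundary of the discs. Suppose you build Pesin charts and $\varepsilon$-double charts for $f_\Lambda$. Then their size $Q(x)$ must also be bounded by the distance from $x$ and $f_\Lambda^{\pm1}(x)$ to this discontinuity set. The regular set then has full measure only for measures whose orbits do not approach the discontinuities exponentially fast, i.e.\ measures adapted to $\Lambda$ in the sense of \cite{Lima-Sarig}. This gives a coding that depends on the measure. It does not give item (2), which asks for all $\chi$-hyperbolic measures simultaneously; that is precisely the improvement of \cite{BCL23} over \cite{Lima-Sarig}.

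\textbf{What the paper does instead.} Following \cite{BCL23,LMN}, the paper makes three substitutions:
\begin{itemize}
\item it uses two sections $\Lambda\subset\widehat\Lambda$ with $d(\Lambda,\partial\widehat\Lambda)>0$ and $\inf r_{\widehat\Lambda}>R_0$;
\item it replaces $f_\Lambda$ by the holonomy maps $g_x^{\pm}$, which are smooth on neighbourhoods of uniform size in $\widehat\Lambda$;
\item it defines $\mathrm{NUH}$ through the linear Poincar\'e flow induced by a one-form $\theta$ with $\ker\theta=T\widehat\Lambda$ on $\widehat\Lambda$.
\end{itemize}
Then $Q(x)$ depends only on the hyperbolicity at $x$, and the Pesin charts $\Psi_x\colon B(0,10Q(x))\to\widehat\Lambda$ never see $\partial\Lambda$.

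In the tame setting, the genuinely new work is to obtain $d(\Lambda,\partial\widehat\Lambda)>0$, $\inf r_{\widehat\Lambda}>0$ and the disjointness/partial-order property uniformly over countably many discs (Proposition~\ref{prop-sections}). With that substitution, the rest of your plan goes through.

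\textbf{A smaller repair in Stage 3.} If you take a finite net on each whole ball $K_n$ and take the union over $n$, then a fixed $K_m$ receives charts of a given size from infinitely many nets, and your property (a) fails. Take the nets on the shells $K_n\setminus K_{n-1}$, or nest them consistently.
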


The proof of Theorem \ref{theorem-coding} is given in Section \ref{section-theorem-coding}. 
Its general strategy is the same as that of \cite{BCL23,LMN}. We will focus on detailing all the changes. 
They are related to the non-compactness of $\mm$: while all {\em local} arguments of \cite{BCL23,LMN} work, the {\em global} arguments need to be justified and/or adapted to {\em semi-local} statements (for instance, the Poincaré section we choose will only be {\em locally finite}). 

\begin{remark}\label{rmk-exponents}
To define Lyapunov exponents one uses the metric and applies the Oseledets theorem, which requires some integrability assumptions. We claim that Lyapunov exponents do exist in the tame setting of Theorem~\ref{theorem-coding} and are preserved by the change  of the metric in Theorem~\ref{theorem-metric}.

Indeed, tame vector fields give rise to bounded cocycles (defined by the variational ODE) since both $X$ and $DX$ are uniformly bounded. 
Hence the integrability condition of the Oseledets theorem is satisfied for any tame vector field and therefore all invariant probability measures have well-defined Lyapunov exponents in the tame setting of Theorem~\ref{theorem-coding}.

Poincaré recurrence guarantees that a measurable change of metric cannot modify the values of the Lyapunov exponents as long as the exponents exist for both the initial and the modified metrics. 
Hence, any invariant measure $\mu$ on $M\setminus\sing$ has the same Lyapunov exponents with respect to both metrics $g$ and $\overline g$ of Theorem~\ref{theorem-metric}.
\end{remark}

During the preparation of this manuscript, we learned that Li and Liu proved a theorem essentially equivalent to our Main Theorem, see \cite[Theorem A]{Li-Liu}. 
\footnote{The fifth author of this paper gave a public talk of the Main Theorem on March 2026; see the information available at \url{http://tianyuanmc.jlu.edu.cn/info/1021/3335.htm}.} 
Both proofs use the general scheme of \cite{BCL23,LMN}. 
There is, though, a crucial distinction on how this is implemented. 
Li and Liu work directly with the original metric, and adapt the arguments of \cite{BCL23,LMN} to flow boxes with sizes comparable to $\|X\|$. 
After a scaling, they recover uniform estimates of these and related objects. 
This approach was first introduced by Liao in a series of papers \cite{Liao1979,Liao1985,Liao1989}, and constitutes an effective tool when dealing with singular flows. 
The modern presentation and its geometric meaning were first given by Gan and Yang \cite{Gan-Yang-2018}.
Li and Liu then apply the approach of \cite{BCL23,LMN} to the rescaled objects.

Our proof uses rescaling in a {\em different way}: instead of rescaling the objects separately, we rescale the Riemannian metric.
The vector field becomes nonsingular, whilst the manifold is no longer compact: this setting is much closer to the context of \cite{BCL23,LMN}, and provides an efficient proof of the Main Theorem. 
Additionally, we believe that the generality of Theorem \ref{theorem-metric} will allow its application to other questions about singular flows.

\subsection{Applications}

We provide four applications of the Main Theorem:
\begin{enumerate}[$\circ$]
\item A lower exponential bound on the number of closed orbits.
\item The construction of irreducible coding of homoclinic classes of measures.
\item The Bernoulli property for equilibrium states of Hölder continuous potentials.
\item The finiteness of the set of ergodic measures of maximal entropy in dimension three.
\end{enumerate}
These applications have also been obtained by Li and Liu \cite{Li-Liu}.
Each of them requires additional hypotheses on the flow, as we now describe.

A {\em simple closed orbit} of length $\ell$ is a length-parametrized curve $\gamma(t)=\vf^t(x)$, $0\leq t\leq\ell$, such that $\gamma(0)=\gamma(\ell)$ and $\gamma(0)\neq \gamma(t)$ for $0<t<\ell$. Defining the {\em trace} of $\gamma$ to be the set $\{\gamma(t):0\leq t\leq\ell\}$, we let $[\gamma]$ denote the equivalence class of the relation $\gamma_1\sim\gamma_2\Leftrightarrow\gamma_1,\gamma_2$ have equal lengths and traces. Let ${\rm Per}_T(\vf):=\#\{[\gamma]:\gamma\textrm{ is simple and }\ell(\gamma)\leq T\}$, and ${\rm Per}_{T,\chi}(\vf)$ be the number of such periodic orbits that are $\chi$-hyperbolic (the Lyapunov exponents transverse to the flow direction are outside $[-\chi,\chi]$).

\begin{theorem}\label{Thm-Flows-periodic}
Let $X$ be a $C^{1+\beta}$ vector field ($\beta>0$) on a $C^\infty$ closed Riemannian manifold $M$. 
If $\vf$ has a hyperbolic measure of maximal entropy, then $\exists C>0$ such that ${\rm Per}_T(\varphi)\geq C{e^{hT}}/{T}$ for all $T$ large enough.
More precisely, if $\vf$ has a $\chi$-hyperbolic measure of maximal entropy,
then $\exists C>0$ such that ${\rm Per}_{T,\chi}(\varphi)\geq C{e^{hT}}/{T}$ for all $T$ large enough.
\end{theorem}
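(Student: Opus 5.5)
The plan is to follow the scheme used for nonsingular flows in \cite{Lima-Sarig,BCL23}: lift the measure of maximal entropy to the topological Markov flow given by the Main Theorem, then count periodic orbits on the symbolic side. Let $\mu$ be a $\chi$-hyperbolic measure of maximal entropy, where $h=h_{top}(\vf)$; we may take $\mu$ ergodic by ergodic decomposition. We first check that $\mu(\sing)=0$. If $\mu$ gave positive mass to $\sing$, ergodicity would force $\mu$ to be a Dirac mass at a fixed point. That measure has zero entropy, which is impossible when $h>0$. (If $h=0$ the bound reads ${\rm Per}_T\ge C/T$, which is trivial once one closed orbit exists, and the coding below gives one anyway.) By item (2) of the Main Theorem, $\mu$ is carried by $\pi_r[\Sigma_r^\#]$. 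Since $\pi_r$ is finite-to-one there, the standard lifting argument gives an ergodic $\sigma_r$-invariant probability $\widehat\mu$ on $\Sigma_r^\#$ with $\pi_{r*}\widehat\mu=\mu$ and $h(\widehat\mu)=h(\mu)=h$. Finite-to-one factors preserve entropy, and Abramov's formula relates this to the base. Then $\widehat\mu$ lives on a single irreducible component $\Sigma'$ of $\Sigma$.

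Next we pass to the base shift. Let $\nu$ be the induced measure on $\Sigma'$, so that $h_\nu(\sigma)=h\int r\,d\nu$, and hence $P_G(-hr)\ge 0$ for the Gurevich pressure on $\Sigma'$. Conversely, any $\sigma$-invariant measure on $\Sigma'$ gives a flow-invariant measure, which projects to a $\vf$-invariant measure of entropy at most $h$. By the variational principle this yields $P_G(-hr)\le0$, so $P_G(-hr)=0$ and $\nu$ is an equilibrium state for $-hr$. Sarig's theory for countable Markov shifts then says that $-hr$ (H\"older by item (1)) is positive recurrent on $\Sigma'$. We reduce to the aperiodic case: if the period is $p$, we restrict to a spectral component under $\sigma^p$, with $r$ replaced by the corresponding Birkhoff sum.

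The counting step follows \cite[\S 8]{Lima-Sarig}. Fix a vertex $a$. Positive recurrence with zero pressure gives a sharp lower bound for weighted periodic points, of the form
$$\sum_{\sigma^n\underline v=\underline v,\ v_0=a} e^{-h r_n(\underline v)}\asymp 1 .$$
Combining this with a local renewal-type argument, we obtain a lower bound of order $e^{hT}/T$ on the number of $\sigma_r$-periodic orbits through $[a]$ with period in $[T-1,T]$, in the style of the renewal theorem/Lalley's counting for suspension flows. The key input is that $r$ is not cohomologous to a function valued in a discrete subgroup; if it were, one obtains the bound along a lattice of periods, and summing over $T'\le T$ still gives ${\rm Per}_T\ge Ce^{hT}/T$.

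Each such symbolic periodic orbit lies in $\Sigma_r^\#$, and $\pi_r$ sends it to a closed orbit of $\vf$ in $\mm$, of length at most the symbolic period. It stays away from $\sing$ and is $\chi$-hyperbolic, since it is shadowed by $\chi$-hyperbolic Pesin charts as in \cite{BCL23}. Distinct symbolic orbits could project to the same closed orbit, but finite-to-one-ness is not uniform, so we use instead the bounded multiplicity over a fixed cylinder $[a]$ (Bowen's counting argument from \cite{Lima-Sarig}). This shows that at most $N(a)$ symbolic periodic orbits through $[a]$ share an image. Iterates (non-simple orbits) contribute only $O(e^{hT/2})$ and are discarded, which gives ${\rm Per}_{T,\chi}(\vf)\ge Ce^{hT}/T$.

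I expect the main obstacle to be this counting step together with the multiplicity bound: getting an $e^{hT}/T$ lower bound, and not merely exponential growth, for a possibly non-locally-compact irreducible component with unbounded-above degrees. I would first try to restrict to a finite subgraph whose Gurevich pressure for $-hr$ is close to zero, but that only gives $e^{(h-\epsilon)T}$. So I would instead rely on positive recurrence and Sarig's generalized Ruelle–Perron–Frobenius theorem on the first-return loop through $[a]$, as in \cite{Lima-Sarig}.
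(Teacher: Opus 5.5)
Your proposal follows the same route as the paper. The paper deduces the statement from the Main Theorem, in the detailed form of Theorem~\ref{t.main.singular}, by repeating the argument of \cite[Theorem 8.1]{Lima-Sarig}; items (2) and (3) of that theorem supply the entropy-preserving lift and the multiplicity bound $C(a,a)$ over a fixed symbol that you invoke.

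Two minor corrections. First, the TMF given by the Main Theorem is locally compact, so your concern about non-locally-compact components does not arise. Second, your parenthetical on $h=0$ only works when $\mu(\sing)=0$: for a Dirac mass at a hyperbolic singularity the coding gives nothing and there may be no closed orbit at all, so that case has to be excluded from the statement rather than proved.
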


\begin{proof}
Theorem \ref{Thm-Flows-periodic} is deduced from the Main Theorem above exactly as in \cite[Theorem 8.1]{Lima-Sarig}.
\end{proof}

In applications, it is useful to work with {\em irreducible} Markov shifts since, among other properties, they are topologically transitive and  carry at most one equilibrium state for each H\"older continuous potential (see Section~\ref{s.notation}).
This is related to the notion of homoclinically related measures and of \emph{homoclinic classes of measures} (see Section~\ref{sec.homoclinic}).
We prove the following theorem.

\begin{theorem}\label{thm.homoclinic}
In the setting of the Main Theorem, let $\mu$ be an ergodic hyperbolic measure with $\mu(\sing)=0$.
Then $\Sigma_{r}$ contains an irreducible component ${\Sigma'_{r}}$ with the following properties:
\begin{enumerate}[{\rm (1)}]
\item $\Sigma_r'$ lifts all ergodic $\chi$-hyperbolic measures $\nu$ with $\nu(\sing)=0$ that are homoclinically related to $\mu$.
\item If $\overline{\nu}$ is a $\sigma_r$-invariant probability measure, then its projection $\nu=(\pi_r)_*{\overline\nu}$
is homoclinically related to $\mu$. 
\end{enumerate}
\end{theorem}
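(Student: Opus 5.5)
We plan to follow the approach of \cite{BCS-SPR} and of its flow versions in \cite{BCL23,LMN}, applied in the tame setting given by Theorem~\ref{theorem-metric}. We pass to $(\mm,\g)$, where $X$ is a tame nonsingular vector field. By Remark~\ref{rmk-exponents}, $\mu$ and every $\nu$ with $\nu(\sing)=0$ keep the same exponents, so $\chi$-hyperbolicity and the notion of homoclinic relation are unchanged (stable and unstable manifolds of $\mu$-typical points are the same sets for both metrics). We then work inside the coding of Theorem~\ref{theorem-coding}, which gives the Main Theorem. Its base $\Sigma$ is built from a locally finite family of $\varepsilon$-double charts (Pesin charts on the Poincaré section) and is made Markov by a Bowen--Sinai refinement.

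First we choose the component. Since $\mu$ is ergodic and lifts to $\Sigma_r^\#$, a lift $\overline\mu$ can be taken ergodic, because the ergodic components of a lift still project to $\mu$. An ergodic measure on a countable Markov shift lives on a single irreducible component, defined by the communication relation on vertices. Call this component $\Sigma'_r$. For (1), let $\nu$ be ergodic, $\chi$-hyperbolic, with $\nu(\sing)=0$ and homoclinically related to $\mu$.

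The key step is to show that the vertices visited by an ergodic lift $\overline\nu$ communicate with those visited by $\overline\mu$. We would take the approach of \cite[\S 3]{BCS-SPR}:
\begin{enumerate}[(a)]
\item Take typical points $x$ for $\mu$ and $y$ for $\nu$. Using the homoclinic relation, pick a transverse intersection point $z\in W^u(x)\pitchfork W^s(\vf^t y)$, and another one with the roles of $x$ and $y$ exchanged.
\item Build a pseudo-orbit of double charts that shadows the past of $x$, then the transverse intersection, then the future of $y$. Transversality together with the Pesin estimates gives admissible manifolds at the intersection point, with controlled parameters; this is the graph-transform argument of the inverse problem.
\item Closing with recurrent charts (Poincaré recurrence gives charts $v,w$ repeating infinitely often along the lifts) produces a finite path in the graph from a vertex of $\overline\mu$ to a vertex of $\overline\nu$, and one back.
\end{enumerate}
Hence $\overline\nu$ is carried by $\Sigma'_r$. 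The irreducible-component structure of the refined partition must be handled as in \cite{BCS-SPR}, using the ``$\epsilon$-generalized pseudo-orbit'' characterization of which refined symbols can follow each other. Local finiteness of the section in the noncompact tame setting enters only locally, so we expect it to cause no new difficulty.

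For (2), let $\overline\nu$ be a $\sigma_r$-invariant probability measure on $\Sigma'_r$. Its ergodic components are again on $\Sigma'_r$, so it is enough to treat the ergodic case. By Poincaré recurrence, $\overline\nu$-a.e.\ point lies in $\Sigma_r^\#$, so its projection $\nu$ is hyperbolic: the charts give the $\chi$-contraction, and $\nu(\sing)=0$ because $\pi_r$ takes values in $\mm$. Irreducibility gives a periodic orbit $p$ of $\Sigma'_r$ passing through both a recurrent vertex of $\overline\nu$ and one of $\overline\mu$. The admissible stable and unstable manifolds attached to the charts then cross transversely, which shows that $\nu$ is homoclinically related to $\pi_r(p)$, and $\pi_r(p)$ to $\mu$. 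Transitivity of the homoclinic relation then concludes.

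The main obstacle is the key step (b)–(c): turning a homoclinic relation into an actual path in the refined Markov graph. To do this we would first prove a flow version of the inverse theorem of \cite{BCS-SPR}: every point whose orbit is $\chi$-hyperbolic with recurrent Pesin parameters is shadowed by a regular chain, and two chains with the same recurrent vertices glue.
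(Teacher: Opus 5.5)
Your proposal takes essentially the same approach as the paper: its proof of this statement simply runs the argument of \cite[Theorem 1.1]{LMN} on the coding constructed here (equivalently, on the tame coding of Theorem~\ref{t.main.tame}), whose relevant properties are local and unaffected by the non-compactness of $\mm$. The step you flag as the main obstacle, turning homoclinic connections into paths in the refined Markov graph, is not re-derived in the paper but imported verbatim from that reference, so you do not need to reprove an inverse theorem yourself.
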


\begin{proof}
Exactly as in \cite[Theorem 1.1]{LMN}.
\end{proof}

This implies the following result for equilibrium states. 
Call a potential $\psi:M\to\R$ {\em admissible} if $\psi\circ\pi_r:\Sigma_r\to\R$ is H\"older continuous, for $\pi_r$ given by the Main Theorem. 
Clearly, every H\"older continuous $\psi$ is admissible.

\begin{corollary}\label{cor.local-uniq}
In the setting of the Main Theorem, let $\mu$ be an ergodic hyperbolic measure, and let $\psi:M\to \mathbb{R}$ be an admissible potential.
Then there is at most one hyperbolic measure $\nu$ with $\nu(\sing)=0$ which is homoclinically related to $\mu$ and satisfies 
$$    h(\vf,\nu)+\int \psi  d\nu
    =\sup\left\{h(\vf,\eta) +\int \psi  d\eta:\begin{array}{l}\eta \text{ \em ergodic, hyperbolic with $\eta(\sing)=0$,}\\
\text{\em  and homoclinically related to $\mu$}\end{array}\right\}.
$$
When it exists, this measure is Bernoulli up to a period.
\end{corollary}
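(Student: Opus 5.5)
The plan is to reduce Corollary~\ref{cor.local-uniq} to the uniqueness of equilibrium states on irreducible topological Markov flows, using Theorem~\ref{thm.homoclinic} as the bridge.

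First, suppose $\nu_1,\nu_2$ are two hyperbolic measures with $\nu_i(\sing)=0$, both homoclinically related to $\mu$ and both attaining the supremum. Fix $\chi>0$ smaller than the absolute values of all transverse exponents of $\mu,\nu_1,\nu_2$. This is possible since they are ergodic with finitely many exponents, and ergodicity is implicit: an equilibrium measure within the class has ergodic components achieving the sup. Build $(\Sigma_r,\sigma_r,\pi_r)$ from the Main Theorem with this $\chi$, and let $\Sigma_r'$ be the irreducible component given by Theorem~\ref{thm.homoclinic}. By item (1) of that theorem, $\nu_i$ lifts to an ergodic $\sigma_r$-invariant probability $\overline\nu_i$ on $\Sigma_r'$ with $(\pi_r)_*\overline\nu_i=\nu_i$.

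Next, I will compare pressures. Because $\pi_r$ is finite-to-one on $\Sigma_r^\#$ (item (3) of the Main Theorem), Abramov/Ledrappier–Walters give $h(\sigma_r,\overline\eta)=h(\vf,(\pi_r)_*\overline\eta)$ for every ergodic $\overline\eta$ carried by $\Sigma_r^\#$. In addition, $\int\psi\circ\pi_r\,d\overline\eta=\int\psi\,d(\pi_r)_*\overline\eta$. By item (2) of Theorem~\ref{thm.homoclinic}, every ergodic invariant measure on $\Sigma_r'$ projects to a measure homoclinically related to $\mu$. Such a projection is hyperbolic and gives no mass to $\sing$: the latter because $\pi_r$ takes values in $\mm$, and the former because the homoclinic relation forces hyperbolicity. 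Consequently the supremum of $h+\int\psi\circ\pi_r$ over ergodic measures on $\Sigma_r'$ equals the supremum in the statement. Thus $\overline\nu_1,\overline\nu_2$ are both equilibrium states for the Hölder potential $\psi\circ\pi_r$ on the irreducible topological Markov flow $\Sigma_r'$.

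To conclude, pass to the base shift. Equilibrium states of the flow for $\psi\circ\pi_r$ correspond, via the Abramov formula, to equilibrium states of the shift for $\Psi-P\,r$. Here $\Psi(v)=\int_0^{r(v)}\psi\circ\pi_r(v,t)\,dt$ is Hölder, since $r$ is Hölder by item (1), and $P$ is the flow pressure. Buzzi–Sarig uniqueness for Hölder potentials on irreducible countable Markov shifts then gives $\overline\nu_1=\overline\nu_2$, hence $\nu_1=\nu_2$. For the Bernoulli property, the shift equilibrium state is Bernoulli up to a period by Sarig's results. Its suspension is then Bernoulli, by Ornstein–Weiss/Ratner-type results for suspensions, unless the roof is cohomologous to a function valued in a discrete subgroup; this alternative corresponds to the "period" allowance, as in \cite{LMN}. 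Since $\pi_r$ is finite-to-one, it is a measure-theoretic isomorphism up to a finite extension, and in fact an isomorphism after the standard argument, as in \cite{Lima-Sarig}, so the property transfers to $\nu$.

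The main obstacle is the identification of suprema in the second step. Specifically, we need the measures on $\Sigma_r'$ to project to measures admissible in the stated sup (hyperbolic, not charging $\sing$). We also need to handle the non-compactness issue that the flow pressure on $\Sigma_r'$ may a priori be infinite or not attained. I would address this by noting that the pressure is bounded by $h_{\rm top}(\vf)+\sup|\psi|<\infty$ via the projection, and that the supremum is attained by $\overline\nu_1$ by assumption. The Buzzi–Sarig uniqueness then applies without needing positive recurrence in advance.
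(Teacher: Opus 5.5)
Your uniqueness argument is the paper's route. The paper only cites \cite[Corollary 1.2]{LMN}, and that proof is your chain: lift to the irreducible component $\Sigma_r'$ of Theorem~\ref{thm.homoclinic}; compare the two variational problems using entropy preservation under the finite-to-one map together with item (2); rewrite flow equilibrium states as shift equilibrium states of $\Psi-Pr$; apply Buzzi--Sarig.

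One caveat concerns re-choosing $\chi$ according to $\mu,\nu_1,\nu_2$. That requires $\psi\circ\pi_r$ to be H\"older for the \emph{new} coding. This is automatic for H\"older $\psi$. But if admissibility only refers to the coding of the given $\chi$, you should keep that coding and read ``hyperbolic'' as ``$\chi$-hyperbolic''.

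The Bernoulli half does not hold up as written, in two places.

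\textbf{The suspension step.} Bernoullicity of a suspension flow does not follow formally from Bernoullicity (up to a period) of the base together with a non-arithmetic roof. Ratner's theorem gets it from the Gibbs property and a H\"older roof over a \emph{finite}-state shift. It does not apply as such to the countable-state flow on $\Sigma_r'$. The countable-state result is exactly what the paper imports from \cite{LLS-2016}, working directly with the flow equilibrium state on the irreducible topological Markov flow and identifying its Pinsker factor. If $r$ is arithmetic, the Pinsker factor is a rotation and the measure is Bernoulli$\times$rotation \cite[Theorem 4.7, Lemma 4.8]{LLS-2016}. Otherwise the Pinsker factor is trivial and the measure is Bernoulli \cite[Theorem 5.1]{LLS-2016}. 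You should invoke this rather than Sarig's theorem on the base plus ``Ratner-type'' results.

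\textbf{The transfer step.} Finite-to-one does not make $\pi_r$ a measure-theoretic isomorphism, so you need a real argument to pass from the lifted measure to $\nu$. In the non-arithmetic case, Ornstein's theorem that factors of Bernoulli flows are Bernoulli suffices. In general, use the canonical lift of Theorem~\ref{t.main.singular}(7). It gives a flow-equivariant measurable right inverse $\iota$ of $\widehat\pi_{\widehat r}$, so $\widehat\pi_{\widehat r}$ is an isomorphism between $\iota_*\nu$ and $\nu$. Moreover, $\iota_*\nu$ is an equilibrium state on the irreducible component carrying it: all measures on one component project to measures homoclinically related to each other, hence to $\mu$. So \cite{LLS-2016} applies to $\iota_*\nu$.
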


\begin{proof}
The uniqueness follows as in \cite[Corollary 1.2]{LMN}.
The Bernoulli property is obtained as in \cite{LLS-2016}, where the authors consider an irreducible topological Markov flow $(\Sigma_r,\sigma_r)$ and an equilibrium state of a Hölder continuous potential and analyze its Pinsker factor:
\begin{enumerate}[$\circ$]
\item If $r$ is {\em arithmetic}, then the Pinsker factor
is isomorphic to a rotation \cite[Theorem 4.7]{LLS-2016},
and the measure is Bernoulli times a rotation
\cite[Lemma 4.8]{LLS-2016}.
\item If $r$ is {\em not arithmetic}, then the Pinsker factor is trivial
and the measure is Bernoulli \cite[Theorem 5.1]{LLS-2016}.
\end{enumerate}
This concludes the proof of the corollary.
\end{proof}

The above corollary reduces the question of the number of equilibrium states to the study of the following two questions: (a) the eventual non-hyperbolic equilibrium states, perhaps proving that they do not exist; (b) the number of homoclinic classes that may carry an equilibrium state, perhaps proving that only one can have maximal pressure. For possibly singular, three dimensional  flows and potential $\psi\equiv0$, the Ruelle inequality shows that all measures maximizing the entropy are hyperbolic, solving (a); a variant of the strong positive recurrence introduced in \cite{BCS-SPR} for diffeomorphisms yields the finiteness in (b).
We thus prove the following result.

\begin{theorem}\label{Thm:finiteness-C-infty}
Let $M$ be a $C^\infty$ closed three dimensional Riemannian manifold, and let $X$ be a $C^\infty$ vector field on $M$.
If $\vf$ has positive topological entropy, then $\vf$ has finitely many ergodic measures of maximal entropy.
\end{theorem}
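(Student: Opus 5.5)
The plan is to reduce finiteness to two facts: every ergodic measure of maximal entropy (m.m.e.) is hyperbolic and sits inside a homoclinic class, and only finitely many homoclinic classes can carry an m.m.e. Corollary~\ref{cor.local-uniq} then gives at most one m.m.e. per class.

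\emph{Step 1: reduction to hyperbolic measures away from $\sing$.} Let $h=h_{top}(\vf)>0$ and let $\mu$ be an ergodic m.m.e.; such measures exist because $C^\infty$ flows are asymptotically $h$-expansive (Newhouse/Yomdin), so entropy is upper semicontinuous. If $\mu(\sing)>0$, ergodicity makes $\mu$ a Dirac mass at a zero of $X$, and its entropy would be $0<h$. Hence $\mu(\sing)=0$. The flow-direction exponent is then zero, and there are two transverse exponents. Ruelle's inequality applied to $\vf^1$ and to $\vf^{-1}$ gives $\lambda^+(\mu)\ge h>0$ and $\lambda^-(\mu)\le -h<0$. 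So every ergodic m.m.e. is $\chi$-hyperbolic for any fixed $\chi<h$, say $\chi=h/2$.

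\emph{Step 2: one m.m.e. per homoclinic class.} Fix $\chi=h/2$ and take the coding from the Main Theorem. By Theorem~\ref{thm.homoclinic}, each ergodic m.m.e. $\mu$ lifts to an irreducible component $\Sigma'_r$. The lift is an m.m.e. of $\Sigma'_r$, since $\pi_r$ is finite-to-one on $\Sigma_r^\#$ and so preserves entropy. By Corollary~\ref{cor.local-uniq} with $\psi\equiv0$, there is at most one m.m.e. in each homoclinic class. It therefore remains to show that only finitely many homoclinic classes carry an m.m.e.

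\emph{Step 3: finiteness of classes.} This is the main obstacle. I would adapt the argument of \cite{BCS-SPR}, which combines Yomdin theory with a quantitative Pesin-block argument.

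First, I would argue by contradiction. Suppose there are infinitely many distinct ergodic m.m.e.'s $\mu_n$, pairwise not homoclinically related. Pass to a weak-$*$ limit $\mu$. By upper semicontinuity (from $C^\infty$ and Yomdin–Newhouse for flows), $\mu$ is an m.m.e. Each of its ergodic components has entropy $h$, hence by Step~1 gives zero mass to $\sing$. In particular no mass escapes to $\sing$, which is exactly where the rescaled metric of Theorem~\ref{theorem-metric} degenerates.

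Next, I would establish a quantitative statement: $\mu_n$-typical points have local stable and unstable manifolds of size bounded below, on a set of uniformly positive measure away from $\sing$. This should follow from the ``entropy-continuity implies uniform size of Pesin manifolds'' mechanism in \cite{BCS-SPR}. I would run it on a Poincaré section or time-one map in the tame metric $\g$: Yomdin estimates control the local entropy at scales comparable to $\|X\|$, and Theorem~\ref{theorem-metric} turns these into uniform scales in $\g$.

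Finally, since $\dim M=3$, the manifolds are curves transverse inside 2-dimensional sections. Uniform-size unstable curves of $\mu_n$ then intersect transversally the uniform-size stable curves of $\mu_m$ for $n,m$ large, because both accumulate on typical points of $\mu$. This forces $\mu_n$ and $\mu_m$ to be homoclinically related, a contradiction.

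The delicate point is uniformity near the singular set. The key is that the limiting mass lies at a positive distance from $\sing$ (in $\|X\|$-scale), so the Yomdin/reparametrization estimates only need to be done at points where $\|X\|$ is bounded below on a set of large measure.
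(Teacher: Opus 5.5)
\paragraph{Gap in Step 3.} Your architecture is the paper's. It has four parts:
\begin{itemize}
\item every ergodic m.m.e.\ satisfies $\mu(\sing)=0$ and is hyperbolic by Ruelle;
\item there is at most one m.m.e.\ per homoclinic class, via Corollary~\ref{cor.local-uniq};
\item there is a Pesin block charged uniformly by all measures of nearly maximal entropy (Proposition~\ref{Prop:SPR-FLow});
\item local product structure on that compact block gives the conclusion (Lemma~\ref{Lem:local-Pro}, Proposition~\ref{Prop:HC-PES}).
\end{itemize}
The gap is in Step~3, which is the real content of the theorem. You do not establish the uniform Pesin block, and the one concrete idea you give for the singular set is insufficient. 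A lower bound $\|X(x)\|\ge\delta$ at the base point, on a set of large $\mu_n$-measure, does not give local stable/unstable manifolds of uniform size. That size is governed by how fast the forward and backward orbits of $x$ approach $\sing$, i.e.\ by tempered two-sided bounds on $\|D\vf^n|_{E^c(x)}\|=\|X(\vf^n x)\|/\|X(x)\|$ (condition (PB2)). An orbit can be far from $\sing$ at time $0$ and still make deep, fast excursions towards it, which destroys the domination of $E^u$ over the flow direction. Passing to $\g$ does not remove this. There the flow direction is bounded, but the same ratio reappears in the $\g$-norms of $D\vf^t|_{E^{u/s}}$ (equivalently in $\overline\Psi$). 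You would also have to redo the Yomdin/continuity-of-exponents machinery of \cite{BCS-SPR} on a non-compact manifold.

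\paragraph{How the paper closes it.} The paper stays in the original metric with the time-one map. It takes the uniform $E^u,E^s$ estimates on a set of $\nu_i$-measure $>1-\tau'$ from \cite[Theorem C]{BLY-ESPR}, and treats the flow direction separately:
\begin{enumerate}
\item The limit $\mu$ of the $\nu_i$ is an m.m.e.\ with $\mu(\sing)=0$. So for $L$ large, the open set $\{e^{-\ve L/2}<\|D\vf^L|_{E^c}\|<e^{\ve L/2}\}$ has $\mu$-measure $>1-\rho/2$, hence $\nu_i$-measure $>1-\rho$ for $i$ large.
\item Birkhoff's theorem and Lemma~\ref{Lem:Pliss-Like}, applied to $a_n=\frac1L\log\|D\vf^L|_{E^c(\vf^{n-1}x)}\|$, then give $\ell_2^{-1}e^{-m\ve}<\|D\vf^m|_{E^c(x)}\|<\ell_2e^{m\ve}$ for all $m>0$, on a set of $\nu_i$-measure $>1-\tau'$.
\item \cite[Lemma 2.20]{BCS-SPR} assembles the $E^u,E^s$ and $E^c$ bounds into $\nu_i({\rm PES}^\ve_{\chi,\ell})>1-\tau/2$.
\end{enumerate}
This Pliss step is the missing idea: it turns ``the limit does not charge $\sing$'' into orbitwise temperedness, uniformly in $i$.

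\paragraph{Final step.} Your last step should be a pigeonhole argument. Cover the compact block by finitely many $r/2$-balls; two measures charging the same ball are homoclinically related by Lemma~\ref{Lem:local-Pro}. ``Both accumulate on typical points of $\mu$'' does not by itself produce the positive-measure sets required in the definition of homoclinic relation.
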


The analogous result for nonsingular three dimensional flows was proved recently by Zang \cite{Yuntao2025}, and Burguet, Luo and Yang proved the finiteness of the set of equilibrium states for a large class of equilibrium states \cite{BLY-ESPR}. 

The proof of Theorem \ref{Thm:finiteness-C-infty} is in Section \ref{section-finiteness}. 
It follows  a  strategy similar to \cite{BCS-SPR}, proving a version of the strong recurrence property for flows.  

\subsection{Preliminaries}\label{s.notation}

We fix a $C^\infty$ closed Riemannian manifold $M$ of dimension $n$, and let $X:M\to TM$ be a $C^{1+\beta}$ vector field ($\beta>0$) and $\varphi=(\varphi^t)_{t\in \R}$ be the flow generated by $X$.
We will denote the value of the vector field $X$ at $x$ by $X(x)$.
Given a set $Y\subset M$ and an interval $I\subset\R$, write $\vf^I(Y):=\bigcup_{t\in I}\vf^t(Y)$. 

The Riemannian metric on $M$ induces a Riemannian metric $\Sas(\cdot,\cdot)$ on $TM$, called the {\em Sasaki metric} (see for instance~\cite{Burns-Masur-wilkinson}). 
By the regularity of $X$, there is $L>0$ such that
$$
d_{\rm Sas}(X(x),X(y))\leq L d(x,y),\ \text{ for all }x,y\in M.
$$
In the sequel, we fix finitely many charts and write the above inequality as 
$\|X(x)-X(y)\|\leq L d(x,y)$ when $x,y$ are in the image of a common chart.

Let $\mathfs G=(V,E)$ be an oriented graph, where $V,E$ are the vertex and edge sets.
We denote edges by $v\to w$, and assume that $V$ is countable.

\medskip
\noindent
{\sc Topological Markov shift (TMS):} It is a pair $(\Sigma,\sigma)$
where
$$
\Sigma:=\{\text{$\Z$-indexed paths on $\mathfs G$}\}=
\left\{\un{v}=\{v_n\}_{n\in\Z}\in V^{\Z}:v_n\to v_{n+1}, \forall n\in\Z\right\}
$$
is the symbolic space and $\sigma:\Sigma\to\Sigma$, $[\sigma(\un v)]_n=v_{n+1}$, is the {\em left shift}. 
We endow $\Sigma$ with the distance $d(\un v,\un w):={\rm exp}[-\inf\{|n|\in\Z:v_n\neq w_n\}]$.
The {\em regular set} of $\Sigma$ is
$$
\Sigma^\#:=\left\{\un v\in\Sigma:\exists v,w\in V\text{ s.t. }\begin{array}{l}v_n=v\text{ for infinitely many }n>0\\
v_n=w\text{ for infinitely many }n<0
\end{array}\right\}.
$$

\medskip
We only consider TMS that are \emph{locally finite}, i.e.
for all $v\in V$ the number of ingoing edges $u\to v$ and outgoing edges $v\to w$ is finite.

\medskip
Given $(\Sigma,\sigma)$ a TMS, let $r:\Sigma\to(0,+\infty)$ be continuous.
For $n\geq 0$, let
$r_n=r+r\circ\sigma+\cdots+r\circ \sigma^{n-1}$ be $n$-th {\em Birkhoff sum} of $r$,
and extend this definition for $n<0$
in the unique way such that the {\em cocycle identity} holds: $r_{m+n}=r_m+r_n\circ\sigma^m$, $\forall m,n\in\Z$.

\medskip
\noindent
{\sc Topological Markov flow (TMF):} The TMF defined
by $(\Sigma,\sigma)$ and the \emph{roof function} $r$ is the pair $(\Sigma_r,\sigma_r)$ where
$\Sigma_r:=\{(\un v,t):\un v\in\Sigma, 0\leq t<r(\un v)\}$
and $\sigma_r:\Sigma_r\to\Sigma_r$ is the flow on $\Sigma_r$ given by
$\sigma_r^t(\un v,t')=(\sigma^n(\un v),t'+t-r_n(\un v))$, where
$n$ is the unique integer such that $r_n(\un v)\leq t'+t<r_{n+1}(\un v)$.
We endow $\Sigma_r$ with a natural metric $d_r(\cdot,\cdot)$,
called the {\em Bowen-Walters metric}, such that $\sigma_r$ is a continuous flow \cite{Bowen-Walters-Metric,Lima-Sarig}.
The {\em regular set} of $(\Sigma_r,\sigma_r)$ is $\Sigma_r^\#=\{(\un v,t)\in\Sigma_r:\un v\in \Sigma^\#\}$.

\medskip
In other words, $\sigma_r$ is the unit speed vertical flow on $\Sigma_r$ with the identification
$(\un v,r(\un v))\sim (\sigma(\un v),0)$. 
The roof functions we will consider are H\"older continuous.
In this case, there exist $\kappa,C>0$ such that $d_r(\sigma_r^t(z),\sigma_r^{t}(z'))\leq C d_r(z,z')^\kappa$
for all $|t|\leq 1$ and $z,z'\in\Sigma_r$, see \cite[Lemma 5.8]{Lima-Sarig}.

\medskip
\noindent
{\sc Irreducible component:}
If $\Sigma$ is a countable Markov shift defined by an oriented graph
$\mathfs{G}=(V,E)$, its \emph{irreducible components} are the subshifts $\Sigma'\subset \Sigma$ over
maximal subsets $V'\subset V$ satisfying the following condition:
$$\forall v,w\in V',\;\exists \un v\in \Sigma \text{ and } n\geq 1\text{ such that } v_0=v \text{ and } v_n=w.$$
Each irreducible component $\Sigma'$ of $\Sigma$ defines an irreducible component of $\Sigma_r$, 
equal to the set of
elements $(\un v,t)\in \Sigma_r$ with $\un v\in\Sigma'$.

\medskip
Buzzi and Sarig proved, for an irreducible TMS, that a large class of Hölder continuous potentials have at most one equilibrium state \cite{Buzzi-Sarig}.

\section{Rescaled metric: proof of Theorem \ref{theorem-metric}}\label{section-theorem-metric}

In this section we prove Theorem \ref{theorem-metric}. Recall that:
\begin{enumerate}[$\circ$]
\item $(M,g)$ is a $C^\infty$ closed Riemannian manifold.
\item $X$ is a $C^{1+\beta}$ vector field ($\beta>0$) on $M$.
\item $\sing=\{x\in M:X(x)=0\}$ is the singular set of $X$, and $\mm=M\setminus\sing$.
\end{enumerate}
We begin constructing a scaling function $f$.

For $k\in \N$, we say that a map $f: \R^n \to \R^1$ is $C^k$ if for every 
multi-index $L:=(\ell_1,\ldots,\ell_n)\in (\N\cup \{0\})^n$ with $|L|:=\ell_1+\cdots+\ell_n\le k$, the partial derivative
$\partial_{L}f=\tfrac{\partial^{|L|}f}{\partial^{\ell_1}x_1 \cdots \partial^{\ell_n}x_n}$ exists and is continuous.
For each $x\in \R^n$, we denote by $ \|(D^kf)_x\|$ the maximum of the absolute value of all $k$-th order partial derivatives of $f$ at $x$, and
we say that $f$ is $C^{\infty}$ if $f$ is $C^k$ for any $k\in \N$.

Let $f,g: \R^n \to \R$ be two $C^k$ functions.
For multi-indices $L,\widetilde L$ as above, we write $\widetilde L\leq L$ if $\widetilde\ell_i\leq \ell_i$ for all $1\leq i\leq n$.
In this case, $L-\widetilde L=(\ell_i-\widetilde\ell_i)_{i=1}^n$ is also a multi-index, and we write $\binom{L}{\widetilde L}=\prod_{i=1}^{n} \tbinom{\ell_i}{\widetilde\ell_i}$. In the following, we will use the \textit{general Leibniz rule}: if
$L$ is a multi-index with $|L|\leq k$ then
\begin{equation}\label{eq:Leibniz-rule}
	[\partial_{L}(fg)]_x=\sum_{\widetilde L \leq L} \binom{L}{\widetilde L} (\partial_{\widetilde L} f)_x \cdot (\partial_{L-\widetilde L} g)_x.
\end{equation}

\begin{proposition}\label{lemma-f}
There is a $C^{\infty}$ function $f:\mm\to (0,+\infty)$ such that:
\begin{enumerate}[{\rm (1)}]
	\item $\tfrac{1}{2} \leq f(x)/\|X(x)\| \leq 2$ for all $x\in \mm$.
	\item For each $k\geq 1$, $\exists C_k>1$ such that $f(x)^{k-1} \|(D^kf)_x\|\leq C_k$ for all $x\in \mm$.
\end{enumerate}
\end{proposition}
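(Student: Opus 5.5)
We will build $f$ by a Whitney-type smoothing of $\|X\|$ at scales adapted to $\|X\|$ itself, i.e. a partition of unity on $\mm$ whose bumps at a point $x$ have size comparable to $\|X(x)\|$. We work in finitely many fixed charts of $M$ and use the Lipschitz bound $\|X(x)-X(y)\|\le L\,d(x,y)$ from Section~\ref{s.notation}. The key observation is that on a ball $B(x,\rho\|X(x)\|)$ with $\rho<1/(2L)$, the Lipschitz bound gives
$$\tfrac12\|X(x)\|\le\|X(y)\|\le\tfrac32\|X(x)\|.$$
So $\|X\|$ is essentially constant on such balls.

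\emph{Step 1: dyadic layers.} Decompose $\mm$ into shells $A_j=\{2^{-j-1}<\|X\|\le 2^{-j}\}$ for $j\ge j_0$. This is a finite union near the top, since $\|X\|$ is bounded. Fix a smooth bump $\theta:\R\to[0,1]$ with $\theta=1$ on $[1/2,1]$ and $\operatorname{supp}\theta\subset(1/4,2)$. Then smooth $\|X\|$ in the $j$-th layer by mollification at scale $2^{-j}\rho$:
$$h_j=\|X\|*\eta_{2^{-j}\rho}, \qquad \eta_s=s^{-n}\eta(\cdot/s),$$
computed in charts. Mollification makes sense here because $\|X\|$ is Lipschitz, and on the $2^{-j}\rho$-neighbourhood of a point with $\|X\|\asymp 2^{-j}$ it varies only by a factor close to $1$ for small $\rho$. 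Hence $h_j/\|X\|\in[3/4,4/3]$ there. The derivative bounds are the standard mollifier estimates:
$$|\partial_L h_j|\le C_L\,\mathrm{Lip}(\|X\|)\,(2^{-j}\rho)^{1-|L|}\quad\text{for }|L|\ge1.$$

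\emph{Step 2: gluing.} For the gluing we cannot use $\theta(2^j\|X\|)$, since $\|X\|$ is not smooth. Instead define cutoffs from the smoothed functions themselves, for instance $\psi_j=\theta(2^j h_j)$. Each $\psi_j$ is $C^\infty$, and by the chain rule and Faà di Bruno its derivatives satisfy $|\partial_L\psi_j|\lesssim 2^{j|L|}$. Only a bounded number of the $\psi_j$ are nonzero at any point, namely $|j-j'|\le 3$ around the layer index of that point. Also $\Psi=\sum_j\psi_j\ge1$, because the supports of $\theta$ overlap and $h_j\asymp\|X\|$. Set
$$f=\frac{\sum_j\psi_j h_j}{\Psi}.$$
This is a locally finite sum, so $f$ is $C^\infty$ on $\mm$. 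Moreover $f$ is a convex combination of values in $[\tfrac34\|X\|,\tfrac43\|X\|]$, which gives (1).

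\emph{Step 3: derivative bounds.} For (2), at a point $x$ with $\|X(x)\|\asymp2^{-j}$, apply the Leibniz rule \eqref{eq:Leibniz-rule} together with Faà di Bruno for $1/\Psi$ (using $\Psi\ge1$ and $|\partial_L\Psi|\lesssim 2^{j|L|}$). Every $k$-th order derivative of $f$ is then a finite sum of products whose scaling is $2^{-j}\cdot 2^{jk}$. The factor $2^{-j}$ comes from $h_j$ or its first derivative, which is bounded by $\mathrm{Lip}\asymp 2^{-j}\cdot 2^{j}$, and each derivative costs at most $2^j$. Thus $\|D^kf\|\le C_k\|X\|^{1-k}\le C_k' f^{1-k}$, which is (2). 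The number of terms is bounded uniformly because of the bounded overlap.

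The main obstacle is the bookkeeping in Step 3. We need uniformity of all constants in $j$, which follows from scale invariance: the layer-$j$ picture rescaled by $2^j$ in the chart is a fixed-size problem with a $1$-Lipschitz input. We also need that chart changes contribute only bounded $C^k$ factors, which holds because there are finitely many charts of the compact manifold $M$. If mollification in charts near chart boundaries causes trouble, we first take a smooth partition of unity subordinate to the finite atlas of $M$, whose derivatives are $O(1)\le O(\|X\|^{1-k})$ since $\|X\|$ is bounded. We then apply the construction in each chart and average, which preserves both (1) and (2).
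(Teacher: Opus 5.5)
Your proposal is correct, but it is organized differently from the paper's proof.

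\textbf{The paper's route.} The paper never smooths $\|X\|$ itself. It applies the Besicovitch covering lemma to the balls $B(x,\mathfrak r(x))$ with $\mathfrak r=\|X\|/(20L)$, and obtains a countable subcover whose enlarged balls $B(x_i,4\mathfrak r(x_i))$ have bounded overlap. It then builds the normalized partition of unity $f_i=\theta_i/s$ with $\|D^kf_i\|\lesssim\mathfrak r^{-k}$, controlling the derivatives of $s^{-1}$ inductively by the Leibniz rule. Finally it sets $f=\sum_i f_i\|X(x_i)\|$, a convex combination of frozen constants. Property (1) is then the slow variation of $\|X\|$ on overlapping balls, and (2) is $\|D^kf\|\le\sum_i\|D^kf_i\|\,\|X(x_i)\|\lesssim\|X\|\,\mathfrak r^{-k}$.

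\textbf{Your route.} You localize in the range of $\|X\|$ rather than in the domain. The dyadic index $j$ replaces the ball index. Bounded overlap is automatic, because a point only sees the $O(1)$ indices with $2^{-j}\asymp\|X(x)\|$. Spatial smoothness comes from mollifying at scale $2^{-j}\rho$ instead of from bumps of radius $\mathfrak r(x_i)$.

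\textbf{Trade-offs.} Your approach dispenses with a variable-radius covering lemma. The cost is twofold:
\begin{itemize}
\item You must convolve in charts, hence your atlas partition of unity and transition-map bookkeeping. The paper sidesteps this by simply identifying $\mm$ with $\R^n$.
\item You must build the cutoffs from the $h_j$ rather than from the non-smooth $\|X\|$.
\end{itemize}
You handle the second point correctly. The global bound $\bigl|h_j-\|X\|\bigr|\le\mathrm{Lip}(\|X\|)\,\rho\,2^{-j}$ forces $\|X\|\asymp2^{-j}$ on the closure of $\{\psi_j\neq0\}$. That single fact is what yields:
\begin{itemize}
\item the ratio bound in (1);
\item the bounded overlap;
\item $\Psi\geq1$ once $\rho$ is small;
\item the scaling $|\partial_L(\psi_jh_j)|\lesssim2^{-j}2^{j|L|}$, which gives (2).
\end{itemize}
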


\begin{proof}
Recall that we work in selected charts where $\|X(x)-X(y)\|\leq L d(x,y)$ for $x,y\in M$ in the image of a common chart. 
For $x\in\mm$, let $\mathfrak r(x)=\frac{1}{20L}\|X(x)\|$. 
Then
\begin{eqnarray}\label{slow-variation}
\frac{1}{2}\leq \frac{\|X(x)\|}{\|X(y)\|}\leq 2, \ \text{for all}~x,y\in \mm~\text{with}~B(x,4\mathfrak{r}(x))\cap B(y,4\mathfrak{r}(y))\neq \emptyset.
\end{eqnarray}
By the Besicovitch  covering lemma, the family $\{B(x,\mathfrak r(x)):x\in\mm\}$ contains a countable subcover $\mathcal U=\{U_i=B(x_i,\mathfrak r(x_i)):i\geq 1\}$ with the following local finiteness property:
there exists $B>0$ such that for every $i\geq 1$ it holds 
\begin{equation}\label{local-finite}
\#\{j\geq 1: B(x_j,4\mathfrak r(x_j))\cap B(x_i,4\mathfrak r(x_i))\neq \emptyset \}<B.
\end{equation}
Consider a $C^{\infty}$ bump function $\theta:\mathbb R^n\to[0,1]$ such that:
\begin{enumerate}[$\circ$]
\item $\theta(v)=1$ for $\|v\|\leq 1$;
\item $\theta(v)=0$ for $\|v\|\geq 2$;
\item $0<\theta(v)<1$ for $1<\|v\|<2$.
\end{enumerate}
For each $k\geq 0$, let $H_k=\sup_{v \in \R^n} \|D^k \theta_v\|<\infty$.
For simplicity, in the sequel we identify $\mm$ with $\mathbb R^n$.
For each $j\ge 1$, we consider the rescaled bump function $\theta_j:B(x_j,4\mathfrak{r}(x_j)) \to [0,1]$ 
defined by 
$$
\theta_j(v)= \theta\left(\frac{v-x_j}{\mathfrak{r}(x_j)}\right),
$$
so that $\theta_j\equiv 1$ on $B(x_j, \mathfrak{r}(x_j))$ and
${\rm supp}(\theta_j)\subset B(x_j, 2\mathfrak{r}(x_j))$.

Now define $s:\mm\to \R$ by $s(x)=\sum_{j \geq 1} \theta_j(x)$. We note the following:
\begin{enumerate}[$\circ$]
\item $\mathcal{U}$ covers $\mm$, hence $s(x)\geq 1$;
\item By property (\ref{local-finite}), $s$ is $C^{\infty}$ and $s(x)\le B$.
\end{enumerate}
Finally, let $f_j:\mm\to[0,1]$ by $f_j(x):=\frac{\theta_j(x)}{s(x)}$.
Then $\sum_{j\ge 1} f_j\equiv 1$ and
\begin{equation}\label{estimate-derivative-bump}
\left\{
\begin{array}{l}
\|(D^k\theta_j)_x\|\leq 2^k H_k \cdot \mathfrak{r}(x)^{-k}\\
\hspace{.15cm} \|(D^k s)_x\|\leq 2^k H_k B \cdot \mathfrak{r}(x)^{-k}
\end{array}\right.
,\forall x\in \mm, j\geq 1.
\end{equation}
Using these, we now estimate the norms of $D^k(s^{-1})$ and $D^k f_j$.
We first define inductively $(E_k)_{k\geq 0}$ such that $\|[D^k (s^{-1})]_x\|\leq E_k \cdot \mathfrak{r}(x)^{-k}$
for all $x\in\mm$. Take $E_0=1$. By (\ref{estimate-derivative-bump}),
$$
\|[D(s^{-1})]_x\|=\frac{\|Ds_x\|}{s(x)^2}\leq \|Ds_x\|\leq \underbrace{2H_1B}_{=:E_1} \cdot \mathfrak{r}(x)^{-1}
=E_1\cdot \mathfrak{r}(x)^{-1}.$$
Assume  $E_0,E_1,\ldots,E_{k-1}$ satisfying the above inductive hypothesis.
By the general Leibniz rule \eqref{eq:Leibniz-rule}, for each multi-index $L$
with $|L|=k$ it holds
$0=[\partial_{L}(s\cdot s^{-1})]_x=\sum_{\widetilde L \leq L} \binom{L}{\widetilde L} [\partial_{\widetilde L} s]_x \cdot [\partial_{L-\widetilde L} (s^{-1})]_x$.
Hence
\begin{align*}
	&\| [\partial_{L} (s^{-1})]_x \|
	\leq s(x)^{-1}\sum_{\widetilde L \leq L\atop{|\widetilde L|\geq 1}} \binom{L}{\widetilde L} \| (\partial_{\widetilde L} s)_x\| \cdot \|[\partial_{L-\widetilde L} (s^{-1})]_x\| \\
	&\leq \sum_{\ell=1}^{k} \sum_{\widetilde L \leq L\atop{|\widetilde L|=\ell}} \binom{L}{\widetilde L} \|(D^{\ell}s)_x\| \cdot \|[D^{k-\ell} (s^{-1})]_x\| 
    =
    \sum_{\ell=1}^{k} \binom{k}{\ell} \|(D^{\ell}s)_x\| \cdot \|[D^{k-\ell} (s^{-1})]_x\|.
\end{align*}
By (\ref{estimate-derivative-bump}) and the induction hypothesis, we get that
\begin{align*}
	\| [D^{k} (s^{-1})]_x\|
	&=\max_{|L|=k} \|[\partial_{L} (s^{-1})]_x\|
	\leq \sum_{\ell=1}^{k} \binom{k}{\ell} \|(D^{\ell}s)_x\| \cdot \|[D^{k-\ell} (s^{-1})]_x\| \\
	&\leq  \underbrace{\left[\sum_{\ell=1}^{k} \binom{k}{\ell}\cdot 2^\ell H_\ell B\cdot E_{k-\ell}\right]}_{=:E_k}\mathfrak{r}(x)^{-k}=:E_k\cdot\mathfrak{r}(x)^{-k},
\end{align*}
and the induction is complete.

We now define $(F_k)_{k\geq 0}$ such that, for all $k\ge0$
\begin{equation}
\|(D^k f_j)_x\|\leq F_k \cdot \mathfrak{r}(x)^{-k}, \ \ \forall x\in\mm, j\geq 1.
\end{equation}
One can take $F_0=1$. Differentiating $f_j=\theta_j\cdot s^{-1}$ gives
$$
	\|(D^k f_j)_x\| \leq \sum_{\ell=0}^{k} \binom{k}{\ell} \|(D^{\ell}\theta_j)_x\| \cdot \|[D^{k-\ell} (s^{-1})]_x\|
	               	\leq  \underbrace{\left[\sum_{\ell=0}^{k} \binom{k}{\ell} \cdot 2^\ell H_\ell B\cdot E_{k-\ell}\right]}_{=:F_k}\mathfrak{r}(x)^{-k},
$$
which proves the assertion.

With these estimates at hand, we define $f:\mm\to(0,+\infty)$ by
$$
f(x)=\sum_{i\geq 1} f_i(x) \|X(x_i)\|.
$$
We check properties (1) and (2). Let $x\in \mm$. If $x\in{\rm supp}(f_i)\subset B(x_i,2\mathfrak r(x_i))$
then (\ref{slow-variation}) implies $\tfrac{1}{2}\leq \tfrac{\|X(x_i)\|}{\|X(x)\|}\leq 2$.
Summing up on $i$ gives (1). We now prove (2). Define $C_k=2^kBF_k(20L)^k$. 
By property (1) and the Besicovitch bound (\ref{local-finite}), 
\begin{align*}
	&[f(x)]^{k-1} \|(D^kf)_x\|
	\leq  [f(x)]^{k-1} \sum_{i\geq 1} \|(D^k f_i)_x\| \cdot  \|X(x_i)\| \\
&\leq 2^{k-1}  \|X(x)\|^{k-1} \sum_{i\geq 1} \|(D^k f_i)_x\| \cdot  2\|X(x)\|=
2^{k}  \|X(x)\|^{k} \sum_{i\geq 1} \|(D^k f_i)_x\|\\
&\leq 2^{k}  \|X(x)\|^{k} \cdot B F_k \mathfrak r(x)^{-k}=C_k,
\end{align*}
as claimed. This completes the proof of the proposition.
\end{proof}

Now we introduce a one-parameter family of metrics on $\mm$.

\medskip
\noindent
{\sc Metrics $\g^\ve$:} For each $\ve>0$, define the metric
$$
\g^\ve=\frac{g}{\ve^2 f^2}\cdot
$$
For small $\ve>0$, $\g^\ve\ge g$ and therefore it defines smaller balls. We let $\vertiii{\cdot}_\ve$ and $\dd_\ve$ denote the norm and metric induced by $\g^\ve$.

\medskip
The idea in the above definition is that $\g^\ve$ ``zooms in'' the manifold $M$ close to $\sing$. We will prove that, as $\ve\to0$, the metric becomes close to a constant metric (small derivatives). Moreover, the first derivative of $X$ does not change much, while the variation of its higher derivatives improve. In the sequel, we quantify this idea and prove other general properties on $\g^\ve$.

\begin{lemma}\label{lemma-complete}
The following holds for all $\ve>0$:
\begin{enumerate}[{\rm (1)}]
\item $(\mm,\g^\ve)$ is a $C^\infty$ complete Riemannian manifold.
\item For every \(x\in \mm\), it holds that
$$\frac{1}{8\ve f(x)} d(y,z)\leq \dd_\ve(y,z)\leq \frac{8}{\ve f(x)}d(y,z),\ \text{ for all }y,z\in B(x,\mathfrak r(x)).$$
\end{enumerate}
($B(x,\mathfrak r(x))$ is the larger ball with respect to the initial metric $g$.)
\end{lemma}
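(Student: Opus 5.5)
I will prove item (2) first and then use it to obtain completeness in item (1). Smoothness of $\g^\ve$ is immediate, since $f$ is $C^\infty$ and positive on $\mm$ by Proposition~\ref{lemma-f}.

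For item (2), fix $x\in\mm$ and $y,z\in B(x,\mathfrak r(x))$. The key pointwise fact is that $f$ is comparable to $f(x)$ on $B(x,4\mathfrak r(x))$. Indeed, for $w$ in that ball, \eqref{slow-variation} gives $\tfrac12\le \|X(w)\|/\|X(x)\|\le 2$, and Proposition~\ref{lemma-f}(1) then gives $\tfrac18\le f(w)/f(x)\le 8$. (The bound on $\|X(w)\|/\|X(x)\|$ also follows directly from the Lipschitz bound: $\|X(w)-X(x)\|\le 4L\mathfrak r(x)=\|X(x)\|/5$.) Hence on this ball
$$\frac{1}{8\ve f(x)}\|v\|\le\vertiii{v}_\ve\le\frac{8}{\ve f(x)}\|v\|.$$

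For the upper bound, I integrate the $\g^\ve$-norm along a $g$-minimizing curve from $y$ to $z$. Its $g$-length is $d(y,z)<2\mathfrak r(x)$, so it stays inside $B(x,3\mathfrak r(x))$, which gives $\dd_\ve(y,z)\le \frac{8}{\ve f(x)}d(y,z)$. If one worries that a minimizing curve might pass through $\sing$, note that this cannot happen inside this ball, since $\|X\|>0$ there. One can also work in the fixed chart.

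For the lower bound, take any curve $\gamma$ from $y$ to $z$. If $\gamma$ stays in $B(x,4\mathfrak r(x))$, its $\g^\ve$-length is at least $\frac{1}{8\ve f(x)}\,\mathrm{length}_g(\gamma)\ge \frac{1}{8\ve f(x)}d(y,z)$. If $\gamma$ leaves that ball, it contains a subarc inside the ball of $g$-length at least $3\mathfrak r(x)>d(y,z)$, and the same bound follows. Taking the infimum over $\gamma$ proves (2).

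For completeness in item (1), I show that closed $\dd_\ve$-bounded sets are compact; equivalently, a $\dd_\ve$-Cauchy sequence cannot approach $\sing$. The main obstacle is a distance-to-singularity estimate. Since $X$ is $L$-Lipschitz and vanishes on $\sing$, we have $\|X(w)\|\le L\,d(w,\sing)$, so $f(w)\le 2L\,d(w,\sing)$. Along any curve, with $\rho=d(\cdot,\sing)$ being $1$-Lipschitz,
$$\vertiii{\dot\gamma}_\ve\ge \frac{|\dot\rho|}{2\ve L\rho}.$$
This gives $\dd_\ve(y,z)\ge \frac{1}{2\ve L}\bigl|\log\frac{\rho(y)}{\rho(z)}\bigr|$, so $\dd_\ve$-bounded sets stay a positive $g$-distance away from $\sing$. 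On a compact set $K\subset\mm$ at positive distance from $\sing$, the metric $\g^\ve$ is uniformly equivalent to $g$. A $\dd_\ve$-Cauchy sequence therefore lies in such a $K$ and is $d$-Cauchy, hence converges in $M$ to a point of $K\subset \mm$. By the uniform equivalence, it also converges for $\dd_\ve$. This proves completeness.
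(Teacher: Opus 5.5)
Your proof is correct and follows essentially the paper's route. For (2) you compare $\tfrac18\le f/f(x)\le 8$ pointwise near $x$ and integrate, with your exit-from-the-ball argument filling in the paper's terse ``integrating'' step; for (1) you use $f\le 2L\,d(\cdot,\sing)$ to force logarithmic divergence of $\g^\ve$-length near $\sing$, where your estimate $\dd_\ve(y,z)\ge (2\ve L)^{-1}\bigl|\log(\rho(y)/\rho(z))\bigr|$ is a continuous version of the paper's shell decomposition.

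One step needs tightening: ``$\dd_\ve$-Cauchy implies $d$-Cauchy'' should rest on the global bound $d\le \ve\,(\sup f)\,\dd_\ve$, which holds because $f$ is bounded on $\mm$. Uniform equivalence on $K$ alone does not control $\dd_\ve$, since near-minimizing curves may leave $K$.
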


\begin{proof}
(1) Fix $\ve>0$. It is clear that $\g^\ve$ is a $C^\infty$ Riemannian metric, hence we just need to prove completeness. Let $\gamma:[0,1]\to M$ be a curve such that $\gamma(t)\in\mm$ for all $t\in [0,1)$ and $\gamma(1)\in \sing$. We claim that the restriction of $\gamma$ to $[0,1)$ has infinite length in the metric $\g^\ve$. For that, for each $n\geq 1$ let
$[t_n,s_n)$ be a subinterval such that $\{d(\gamma(t),\sing):t_n\le t< s_n\}=(e^{-n-1},e^{-n}]$.
This defines disjoint intervals of $[0,1]$. Letting $\gamma_n$ be the restriction of $\gamma$ to $[t_n,s_n]$, we have
$\ell(\gamma)\geq \sum_{n\geq 1}\ell(\gamma_n)$ both in the metrics $g$ and $\g^\ve$. Letting $y\in\sing$ be such that $d(x,\sing)=d(x,y)$, then
$$
f(x)\leq 2\|X(x)\|=2\|X(x)-X(y)\|\leq 2L d(x,y)=2L d(x,\sing)
$$
and so for $t\in [t_n,s_n)$ it holds
$$
\vertiii{\gamma'(t)}_\ve= \frac{\|\gamma'(t)\|}{\ve f(\gamma(t))}
\geq \frac{\|\gamma'(t)\|}{2\ve L d(\gamma(t),\sing)}
\geq (2\ve L)^{-1} e^{n}\|\gamma'(t)\|.
$$
Integrating, we conclude that the length of $\gamma_n$ in the metric $\g^\ve$
is
\begin{align*}
&\ \ell_{\g^\ve}(\gamma_n)=\int_{t_n}^{s_n}\vertiii{\gamma'(t)}_{\ve}dt
\geq (2\ve L)^{-1} e^{n}\int_{t_n}^{s_n}\|\gamma'(t)\|dt=
(2\ve L)^{-1} e^{n}\ell_g(\gamma_n)\\
&\geq 
(2\ve L)^{-1} e^{n}(e^{-n}-e^{-n-1})=(2\ve L)^{-1}(1-e^{-1}).
\end{align*}
Since this latter expression does not depend on $n$, we conclude that $\ell_{\g^\ve}(\gamma)=+\infty$. 

\medskip
\noindent
(2) Let $y\in B(x,\mathfrak r(x))$. By Proposition \ref{lemma-f} and estimate (\ref{slow-variation}),
we have $\tfrac{1}{8} \leq \tfrac{f(x)}{f(y)}\leq 8$, and so
\begin{equation}\label{estimate-eps-norm}
\frac{1}{8\ve f(x)}\|v\|\leq \vertiii{v}_{\ve}\leq \frac{8}{\ve f(x)}\|v\|,
\ \ \forall v\in T_yM.
\end{equation}
Integrating, the assertion follows.
\end{proof}

The next proposition estimates the derivatives of $\g^\ve$.

\begin{proposition}\label{prop-derivative-new-metrics}
There are constants $(D_k)_{k\geq 0}$
such that $\|D^k\g^\ve\|_{C^0}<D_k\ve^k$ for all $\ve>0$ and $k\geq 0$.
\end{proposition}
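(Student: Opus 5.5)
The plan is to read $\|D^k\g^\ve\|_{C^0}$ in the natural rescaled charts adapted to $\g^\ve$, and to reduce the statement to a scale-invariant bound on the derivatives of $f^{-2}$ in the original charts. For $x\in\mm$, use the fixed chart of $M$ containing $B(x,\mathfrak r(x))$, identified with a subset of $\R^n$ as in the proof of Proposition~\ref{lemma-f}. On it define the affine chart $\Phi_x(u)=x+\ve f(x)u$, for $\|u\|\le \mathfrak r(x)/(\ve f(x))$, a radius at least of order $(\ve L)^{-1}$. In the coordinate $u$ the metric $\g^\ve$ has coefficients
$$
(\Phi_x^*\g^\ve)_{ij}(u)=\frac{f(x)^2}{f(\Phi_x(u))^2}\,g_{ij}(\Phi_x(u)).
$$
By Lemma~\ref{lemma-complete}(2) and \eqref{estimate-eps-norm}, these coefficients are uniformly comparable to the Euclidean metric. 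The case $k=0$ is therefore immediate from $\tfrac18\le f(x)/f(y)\le 8$ on $B(x,\mathfrak r(x))$ and the boundedness of $g_{ij}$. By the chain rule, each $u$-derivative produces a factor $\ve f(x)$, so
$$
\|D^k_u(\Phi_x^*\g^\ve)\|\le (\ve f(x))^k f(x)^2\,\|D^k_y(f^{-2}g_{ij})\|_{y=\Phi_x(u)}.
$$

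The key step is the bound $\|D^k(f^{-2})_y\|\le G_k f(y)^{-2-k}$ for all $y\in\mm$. I would prove it by induction exactly as the bound for $s^{-1}$ in the proof of Proposition~\ref{lemma-f}. First obtain $\|D^k(f^{-1})\|\le G'_k f^{-1-k}$ from the Leibniz rule \eqref{eq:Leibniz-rule} applied to $f\cdot f^{-1}=1$. Each term $\|D^\ell f\|\,\|D^{k-\ell}(f^{-1})\|$ with $\ell\ge1$ is bounded, using Proposition~\ref{lemma-f}(2) and the induction hypothesis, by $C_\ell f^{1-\ell}\cdot G'_{k-\ell} f^{-1-k+\ell}=C_\ell G'_{k-\ell}f^{-k}$; dividing by $f$ gives the claim. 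Then apply Leibniz once more to $f^{-1}\cdot f^{-1}$. Next, by Leibniz with the bounded $C^\infty$ coefficients $g_{ij}$,
$$
\|D^k(f^{-2}g_{ij})\|\le \sum_{\ell}\binom{k}{\ell} G_\ell f^{-2-\ell}\,\|g\|_{C^{k-\ell}}\le G''_k f^{-2-k}.
$$
Here we use that $f\le 2\sup\|X\|$ is bounded, so $f^{-2-\ell}\le c\,f^{-2-k}$ for $\ell\le k$.

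Combining, with $y=\Phi_x(u)$,
$$
\|D^k_u(\Phi_x^*\g^\ve)\|\le G''_k\,\ve^k\, f(x)^{k+2}f(y)^{-k-2}\le G''_k 8^{k+2}\ve^k=:D_k\ve^k.
$$
The main obstacle is not analytic but one of bookkeeping. One must make precise in which coordinates $D^k\g^\ve$ is measured: the rescaled charts $\Phi_x$, which are also the charts needed for tameness and injectivity radius later. One must also check that the bound is uniform over the whole domain of $\Phi_x$ and not just near $u=0$; this holds because the comparability $f(x)\asymp f(y)$ holds on all of $B(x,\mathfrak r(x))$. For the strict inequality, simply enlarge the constants.
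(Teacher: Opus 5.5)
Your proof is correct and is essentially the paper's argument: the chart $\Phi_x(u)=x+\ve f(x)u$ is the inverse of the paper's homothety $\psi$. The only inputs are the comparability $\tfrac18\le f(x)/f(y)\le 8$ on $B(x,\mathfrak r(x))$ and Proposition~\ref{lemma-f}(2).

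The difference is only in bookkeeping. You factor out $(\ve f(x))^k$ by the affine chain rule and prove the scale-invariant bound $\|D^k(f^{-2}g_{ij})\|\le G''_k f^{-2-k}$ by the same Leibniz induction used for $s^{-1}$. The paper instead expands the derivatives of $\hatg^\ve_{ij}$ into explicit monomials $\ve^k\partial_{L_1}g_{ij}\,\partial_{L_2}f\cdots\partial_{L_m}f\,f(0)^{k+2}/f(x)^{m+1}$ and bounds each one. Your version makes the constant tracking cleaner, but it is not a different route.
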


Above, $\|D^k\g^\ve\|_{C^0}$ represents the maximum $k$-th derivative of the coefficients of the metric in the canonical basis, where the norm is taken with respect to the metric $\dd_\ve$.

\begin{proof}
Recall that $M$ has dimension $n$. Since the claims are local, we can fix a chart and assume we are on $\R^n$. In the sequel,
we fix $x_0$ in this neighborhood and estimate $\g^\ve$ around $x_0$. Applying a translation, we 
can further assume that $x_0=0$. Letting $B(1)\subset\R^n$ be the ball centered at 0 with radius 1,
consider the homothety $\psi:B(0,\ve f(0))\to B(1)$ defined by
$\psi(x)=x/\ve f(0)$, which is ``zooming in'' on a neighborhood of 0. The metric $\g^\ve$ is pushed under $\psi$ to a metric $\hatg^\ve$ on the unit ball of $\R^n$. By definition, for $y=\psi(x)\in B(1)$ and $v,w\in \R^n$ we have
$$
\hatg^\ve_y(v,w)=\g^\ve_{x}(\ve f(0)v,\ve f(0)w)=[\ve f(0)]^2\cdot \g^\ve_{x}(v,w)
=\left[\frac{f(0)}{f(x)}\right]^2g_x(v,w).
$$
Let $\{v_1,\ldots,v_n\}=\left\{\frac{\partial}{\partial x_1},\ldots,\frac{\partial}{\partial x_n}\right\}$ be the canonical basis,
$g_{ij}=g(v_i,v_j)$ and $\g^\ve_{ij}=\g^\ve(v_i,v_j)$ be the coefficients of $g$ an $\g^\ve$ respectively. The
coefficients of $\hatg^\ve$ are
\begin{equation}\label{equation-pushed-metric}
\hatg^\ve_{ij}(y)=\left[\frac{f(0)}{f(x)}\right]^2g_{ij}(x)\ \text{ where }y=\psi(x).
\end{equation}
Since $\tfrac{1}{8}\leq \tfrac{f(0)}{f(x)}\leq 8$ on $B(0,\ve f(0))$ and $g$ is a $C^\infty$ metric on a closed manifold,
there is $D_0>0$ such that $\|\g^\ve\|_{C^0}<D_0$ for all $\ve>0$.

Now we consider $k=1$, estimating the derivatives of the metric coefficients.
Recall that $x=\psi^{-1}(y)=\ve f(0)y$. For $\ell=1,\ldots,n$, let 
$\partial_\ell f=\frac{\partial f}{\partial x_\ell}$, $\partial_\ell g_{ij}=\frac{\partial g_{ij}}{\partial x_\ell}$ and 
$\partial_\ell \hatg^\ve_{ij}=\frac{\partial \hatg^\ve_{ij}}{\partial y_\ell}$.
Then
\begin{align}\label{equation-1st-derivative}
\partial_\ell \hatg^\ve_{ij}(y)&=f(0)^2\left[\frac{\partial_\ell g_{ij}(x)\ve f(0)\cdot f(x)^2-g_{ij}(x)\cdot 2f(x)\partial_\ell f(x)\ve f(0)}{f(x)^4}\right] \nonumber\\
&=\ve \left[\partial_\ell g_{ij}(x)\frac{f(0)^3}{f(x)^2}-
2g_{ij}(x)\partial_\ell f(x)\frac{f(0)^3}{f(x)^3}\right].
\end{align}
By Proposition \ref{lemma-f}(2), the first term in the brackets is bounded by
$8^2\|g\|_{C^1}\|f\|_{C^0}$ and the second is bounded by $2\cdot 8^3\|g\|_{C^1}C_1$,
hence $\exists D_1>1$ such that $\|D\g^\ve\|_{C^0}<D_1\ve$ for all $\ve>0$.

Before proceeding by induction, we make the calculation for the second order derivatives. Letting 
$\partial_{\ell_1\ell_2}\hatg^\ve_{ij}=\tfrac{\partial^2 \hatg^\ve_{ij}}{\partial y_{\ell_1}\partial y_{\ell_2}}$
(and $\partial_{\ell_1\ell_2}f, \partial_{\ell_1\ell_2}g_{ij}$ accordingly), we have:
\begin{align*}
\partial_{\ell_1\ell_2}\hatg^\ve_{ij}(y)&=\ve\left[
\partial_{\ell_1\ell_2}g_{ij}(x)\ve f(0)\frac{f(0)^3}{f(x)^2}+\partial_{\ell_1}g_{ij}(x)f(0)^3\left(-\frac{2f(x)\partial_{\ell_2} f(x) \ve f(0)}{f(x)^4}\right)\right.\\
&\ \ \ \ \ \ \ -2\partial_{\ell_2} g_{ij}(x)\ve f(0)\partial_{\ell_1}f(x)\frac{f(0)^3}{f(x)^3}
-2g_{ij}(x)\partial_{\ell_1\ell_2}f(x)\ve f(0) \frac{f(0)^3}{f(x)^3}\\
&\ \ \ \ \ \ \ \left. -2g_{ij}(x)\partial_{\ell_1}f(x)f(0)^3\left(-\frac{3f(x)^2\partial_{\ell_2} f(x)\ve f(0)}{f(x)^6}\right)\right]\\
&=\ve^2\left[
\partial_{\ell_1\ell_2}g_{ij}(x)\frac{f(0)^4}{f(x)^2}-2\partial_{\ell_1}g_{ij}(x)\partial_{\ell_2} f(x)\frac{f(0)^4}{f(x)^3} \right.\\
&\ \ \ \ \ \ \ \ -2\partial_{\ell_2} g_{ij}(x)\partial_{\ell_1}f(x)\frac{f(0)^4}{f(x)^3}
-2g_{ij}(x)\partial_{\ell_1\ell_2}f(x)\frac{f(0)^4}{f(x)^3}\\
&\ \ \ \ \ \ \ \ \left. +6g_{ij}(x)\partial_{\ell_1}f(x)\partial_{\ell_2} f(x)\frac{f(0)^4}{f(x)^4}\right].
\end{align*}
Calling the five terms inside the brackets by I, II, III, IV and V, Proposition \ref{lemma-f}(2) gives:
\begin{enumerate}[$\circ$]
\item $|{\rm I}|\leq 8^2\|g\|_{C^2}\|f\|_{C^0}^2$.
\item $|{\rm II+III}|\leq 4\cdot 8^3\|g\|_{C^2}\|f\|_{C^0}C_1$.
\item $|{\rm IV}|\leq \left|2g_{ij}(x)\cdot 2f(x)\partial_{\ell_1\ell_2}f(x)\frac{f(0)^3}{f(x)^3}\right|\leq 4\cdot 8^3\|g\|_{C^2}C_2$.
\item $|{\rm V}|\leq 6\cdot 8^4\|g\|_{C^2}C_1^2$.
\end{enumerate}
Hence $\exists D_2>1$ such that $\|D^2\g^\ve\|_{C^0}<D_2\ve^2$ for all $\ve>0$.

By induction, we assume that every $k$-th partial derivative of $\hatg^\ve_{ij}$ is the sum of a bounded number of terms of the form 
\begin{equation}\label{general-term}
C \ve^k\partial_{L_1}g_{ij}(x)\partial_{L_2}f(x)\cdots \partial_{L_m}f(x)\frac{f(0)^{k+2}}{f(x)^{m+1}}
\end{equation}
where $C$ is a constant (omitting the dependence on $k$) and $L_1,\ldots,L_m$ are $m\leq k$ multi-indices with $|L_1|+\cdots+|L_m|=k$.

This holds for $k=1,2$.
Differentiating (\ref{general-term}) with respect to one of the variables leads to a sum of $m+1$ terms of three different types:
\begin{enumerate}[$\circ$]
\item A term of the form
\begin{align*}
&\ C \ve^k[\partial_{\widetilde L_1}g_{ij}(x)\ve f(0)]\partial_{L_2}f(x)\cdots \partial_{L_m}f(x)\frac{f(0)^{k+2}}{f(x)^{m+1}}\\
&=C \ve^{k+1}\partial_{\widetilde L_1}g_{ij}(x)\partial_{L_2}f(x)\cdots \partial_{L_m}f(x)\frac{f(0)^{k+3}}{f(x)^{m+1}}
\end{align*}
where $\widetilde L_1,L_2,\ldots,L_m$ are multi-indices with $|\widetilde L_1|+|L_2|+\cdots+|L_m|=k+1$.
\item $m-1$ terms of the form 
\begin{align*}
&\ C \ve^k\partial_{L_1}g_{ij}(x)\partial_{L_2}f(x)\cdots[\partial_{\widetilde L_j}f(x)\ve f(0)] \cdots \partial_{L_m}f(x)\frac{f(0)^{k+2}}{f(x)^{m+1}}\\
&=C \ve^{k+1}\partial_{L_1}g_{ij}(x)\partial_{L_2}f(x)\cdots \partial_{\widetilde L_j}f(x)\cdots\partial_{L_m}f(x)\frac{f(0)^{k+3}}{f(x)^{m+1}}
\end{align*}
where $L_1,L_2,\ldots,\widetilde L_j,\cdots,L_m$ are multi-indices with $|L_1|+|L_2|+\cdots+|\widetilde L_j|+\cdots+|L_m|=k+1$.
\item A term of the form 
\begin{align*}
&\ C \ve^k\partial_{L_1}g_{ij}(x)\partial_{L_2}f(x)\cdots\partial_{L_m}f(x)f(0)^{k+2}\left[-\frac{(m+1)\partial_{L_{m+1}}f(x)\ve f(0)}{f(x)^{m+2}}\right]\\
&=\widetilde C \ve^{k+1}\partial_{L_1}g_{ij}(x)\partial_{L_2}f(x)\cdots\partial_{L_{m+1}}f(x)\frac{f(0)^{k+3}}{f(x)^{m+2}}
\end{align*}
where $L_1,L_2,\ldots,L_j,\cdots,L_{m+1}$ are multi-indices with $|L_1|+\cdots+|L_{m+1}|=k+1$.
\end{enumerate}
This concludes the induction. To conclude the proof, we note that by Proposition \ref{lemma-f}(2)
a term of the form (\ref{general-term}) is bounded by
\begin{align*}
&\ |C| \ve^k|\partial_{L_1}g_{ij}(x)|\cdot |f(x)^{|L_2|-1}\partial_{L_2}f(x)|\cdots |f(x)^{|L_m|-1}\partial_{L_m}f(x)|\frac{|f(0)|^{k+2}}{|f(x)|^{m+1+k-|L_1|-m+1}}\\
& \leq 2^{k+2}|C|\ve^{k} \|g\|_{C^{|L_1|}}C_{|L_2|}\cdots C_{|L_m|}\|f\|_{C^0}^{|L_1|}.
\end{align*}
The sum of finitely many terms of this form is bounded by $D_k\ve^k$ for some constant $D_k$.
This concludes the induction step and the proof of the proposition.
\end{proof}

Next, we estimate the injectivity radius of $\g^\ve$. This is related to the behaviour of geodesics, which we will study as  solutions of ordinary differential equations (ODE). The injectivity radius of $\g^\ve$ will then be estimated
by some uniform version of classical results on the lifetime of solutions of ODE's depending on a parameter.

Recall we fixed a chart and considered $x\in \R^n$ with the metric $\g^\ve$; by applying the homothety $\psi$, we obtain another metric $\hatg^\ve$ on $\R^n$. We write $B_1\subset\R^n$ for the ball with center at the origin and radius 1 in this metric.

\medskip
\noindent
{\sc Equation of geodesics for $\hatg^\ve:$} A geodesic for the metric $\hatg^\ve$ is a curve whose representation in charts is $\gamma(t)=(x_1(t),\ldots,x_n(t))$ satisfying the system of second order ODE's 
$$
\frac{d^2x_k}{dt^2}+\sum_{i,j}\Gamma_{ij}^k \frac{dx_i}{dt}\cdot \frac{dx_j}{dt}=0,
\ \ \ k=1,2,\ldots,n,
$$
where $\Gamma_{ij}^k$ are the {\em Christoffel symbols} of $\hatg^\ve$, defined by
\begin{equation}\label{Christoffel}
\Gamma_{ij}^k=\frac{1}{2}\sum_{m=1}^n \left\{\partial_i\hatg_{jm}^\ve+\partial_j\hatg_{mi}^\ve-\partial_m \hatg_{ij}^\ve\ \right\}(\hatg^\ve)^{mk}.
\end{equation}

\medskip
Above, $(\hatg^\ve)^{mk}$ denotes the coefficients of the inverse matrix of $(\hatg^\ve_{mk})$.
The Christoffel symbols have the following bounds.

\begin{lemma}\label{lemma-christoffel}
There is a constant $C>0$ such that for all $x\in \mm$, all $\ve>0$, and $i,j,k\in\{1,2,\ldots,n\}$, the Christoffel symbol $\Gamma_{ij}^k:B_1\to\R$ is $C^\infty$ with
$\|\Gamma_{ij}^k\|_{C^2}\leq C\ve$. 
\end{lemma}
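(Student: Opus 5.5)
The plan is to read off the bound directly from formula \eqref{Christoffel}, using the estimates on $\hatg^\ve$ obtained in the proof of Proposition~\ref{prop-derivative-new-metrics}. By \eqref{equation-pushed-metric}, on $B_1$ we have $\hatg^\ve_{ij}(y)=[f(0)/f(x)]^2g_{ij}(x)$ with $x=\ve f(0)y$. Since $x$ then lies in $B(0,\ve f(0))\subset B(0,\mathfrak r(0))$ for $\ve$ small (and $\tfrac18\le f(0)/f(x)\le 8$ there), the coefficients $\hatg^\ve_{ij}$ are uniformly bounded. They are also uniformly elliptic: $\hatg^\ve\ge 8^{-2}g$, and $g$ is uniformly elliptic in the fixed finite atlas. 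Moreover, the computation in the proof of Proposition~\ref{prop-derivative-new-metrics} shows that every $k$-th partial derivative of $\hatg^\ve_{ij}$ in the $y$-variables is a sum of terms of the form \eqref{general-term}, each bounded by a constant times $\ve^k$. This bound is uniform in the base point $0$, i.e.\ in $x\in\mm$, and in $\ve$. So I would first record the following.
\begin{equation*}
\|\hatg^\ve_{ij}\|_{C^0(B_1)}\le D_0,\qquad \|\partial_L \hatg^\ve_{ij}\|_{C^0(B_1)}\le D_{|L|}\ve^{|L|}\ \ (|L|\ge1),\qquad \hatg^\ve\ge c\,\mathrm{Id}.
\end{equation*}

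Next I would handle the inverse matrix $(\hatg^\ve)^{mk}$. By uniform ellipticity and boundedness, Cramer's rule gives $\|(\hatg^\ve)^{mk}\|_{C^0}\le D_0'$. Differentiating $\hatg^\ve\cdot(\hatg^\ve)^{-1}=\mathrm{Id}$ gives $\partial(\hatg^\ve)^{-1}=-(\hatg^\ve)^{-1}(\partial\hatg^\ve)(\hatg^\ve)^{-1}$. Differentiating once more, with the Leibniz rule \eqref{eq:Leibniz-rule}, yields $\|(\hatg^\ve)^{mk}\|_{C^2}\le D'$ for a constant $D'$ independent of $\ve$ and $x$ (in fact the derivatives are $O(\ve)$, though boundedness suffices).

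Then I would apply the Leibniz rule to \eqref{Christoffel}. Each $\Gamma^k_{ij}$ is a finite sum of products $\partial_a\hatg^\ve_{bc}\cdot(\hatg^\ve)^{mk}$. Derivatives of order $0,1,2$ of such a product involve $\partial^{1+p}\hatg^\ve$ with $p\le 2$, whose size is $O(\ve^{1+p})\le O(\ve)$ for $\ve\le1$, multiplied by derivatives of the inverse matrix, which are bounded. This gives $\|\Gamma^k_{ij}\|_{C^2(B_1)}\le C\ve$, and $C^\infty$ regularity is clear because $f$ and $g$ are $C^\infty$.

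The main obstacle is bookkeeping rather than anything deep. I need to make sure that the bounds of Proposition~\ref{prop-derivative-new-metrics} are really stated for $\hatg^\ve$ on all of $B_1$ and not just at $y=0$, which requires $B(0,\ve f(0))$ to sit inside the region where $f(0)/f(x)$ is controlled. For that I would take $\ve<1/(40L)$, so that $\ve f(0)\le 2\ve\|X(0)\|<\mathfrak r(0)$. Finally, for $\ve$ larger than this threshold, the statement only needs to hold for small $\ve$ in the intended application, or can be absorbed into the constant.
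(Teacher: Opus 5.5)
Your proposal is correct and follows essentially the paper's argument. In both, the inverse matrices $(\hatg^\ve)^{mk}$ are controlled because $\hatg^\ve$ stays in a compact subset of ${\rm GL}(n,\R)$ uniformly in the base point, and the Leibniz rule applied to \eqref{Christoffel}, combined with the $O(\ve^k)$ derivative bounds from Proposition~\ref{prop-derivative-new-metrics}, gives the estimate; the paper records the sharper $\|D^\ell\Gamma_{ij}^k\|_{C^0}=O(\ve^{\ell+1})$, but only $O(\ve)$ is needed.

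Your restriction $\ve<1/(40L)$ is the right one, and the paper assumes it implicitly as well. You should drop the alternative of ``absorbing large $\ve$ into the constant'': for large $\ve$ the ball $B(0,\ve f(0))$ need not stay where $f(0)/f(x)$ is controlled, and it may even reach $\sing$.
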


\begin{proof}
The proof is a direct consequence of Proposition \ref{prop-derivative-new-metrics}, which 
actually gives that $\|D^\ell \Gamma_{ij}^k\|_{C^0}\leq {\rm const}\times \ve^{\ell+1}$.
Since we only need this for $\ell=0,1,2$, we leave the general case to the interested reader. 

By (\ref{equation-pushed-metric}), when $x_0$ ranges over $\mm$ (and $x$ ranges over $B(x_0,\ve f(x_0))$, $(\hatg^\ve_{ij})$ belongs to a compact region of ${\rm GL}(n,\R)$, so the same happens to their inverse matrices. By Proposition \ref{prop-derivative-new-metrics},
$\|\Gamma_{ij}^k\|_{C^0}\leq {\rm const}\times \ve$. 
Differentiating (\ref{Christoffel}) once and using Proposition \ref{prop-derivative-new-metrics} again,
we get that $\|D\Gamma_{ij}^k\|_{C^0}\leq {\rm const}\times \ve^2$. Repeating this procedure one more time,
we get that $\|D^2\Gamma_{ij}^k\|_{C^0}\leq {\rm const}\times \ve^3$. The proof is complete.
\end{proof}

In the sequel, we fix a small parameter $\ve_0>0$.
The previous lemma implies that, for each $i,j,k\in\{1,2,\ldots,n\}$, 
there is a $C^\infty$ map
$\ve\in(0,\ve_0]\to \Gamma_{ij}^k$, belonging to $C^1(B_1;\R)$, where $\Gamma_{ij}^k$ is the Christoffel symbol associated to the parameter $\ve$.

The equation of geodesics can be reduced to a first order ODE, by introducing the velocity
$y=(y_1,\ldots,y_n)=(\tfrac{dx_1}{dt},\ldots,\tfrac{dx_n}{dt})$
as a variable.
The defining function of this first order ODE, which depends on $\ve$, is
$$
F(x,y,\ve)=(y_1,\ldots,y_n,G_{\ve}^1,\ldots,G_{\ve}^n)
$$
where
$$
G_{\ve}^k(x,y)=-\sum_{i,j}\Gamma_{ij}^k y_iy_j.
$$
Extending $F$ to $\ve=0$ by setting $F(x,y,0)=(y_1,\ldots,y_n,0,\ldots,0)$, we have that $F$ is $C^2$ and, for each $x$, 
$$\|F(x,y_1,\ve)-F(x,y_2,0)\|_{C^0}\leq C\left[\|y_1-y_2\|+|\ve|\right].
$$
For simplicity, we write $F_\ve$ for $F(\cdot,\cdot,\ve)$.
Now we are in position to prove the positivity of the injectivity radius
of $\g^\ve$.

\begin{proposition}\label{prop-inj-radius}
If $\ve>0$ is small enough, then $\g^\ve$ has positive injectivity radius.
\end{proposition}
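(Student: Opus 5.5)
We compare the geodesic flow of $\hatg^\ve$ on the unit ball $B_1$ with the straight-line flow of the Euclidean-like model $F_0$, and deduce injectivity of the exponential map on a ball of fixed radius. Fix $x_0\in\mm$ and work in the rescaled chart $\psi$ of Proposition~\ref{prop-derivative-new-metrics}, so that $\g^\ve$ near $x_0$ becomes $\hatg^\ve$ on $B_1$. By (\ref{equation-pushed-metric}) and Lemma~\ref{lemma-complete}(2), the coefficients of $\hatg^\ve$ lie in a fixed compact subset of positive definite matrices, uniformly in $x_0$ and $\ve$. Moreover the $\hatg^\ve$-distance is uniformly comparable to the Euclidean distance on $B_1$, and the scaling factor between $\g^\ve$ and $\hatg^\ve$ is a constant. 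Hence a lower bound $\rho>0$ on the injectivity radius of $\hatg^\ve$ at $0$, uniform in $x_0$, gives a uniform lower bound for the injectivity radius of $\g^\ve$.

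First, I will show that geodesics starting at $0$ with $\hatg^\ve$-unit speed stay in $B_{1/2}$ for times $t\in[0,\rho_0]$, with $\rho_0$ independent of $\ve\le\ve_0$ and $x_0$. This follows from the equation $\ddot x=-\Gamma(x)(\dot x,\dot x)$ with $\|\Gamma\|_{C^0}\le C\ve$ (Lemma~\ref{lemma-christoffel}) and a Gronwall argument. As long as $|\dot x|\le 2K$, we get $|\ddot x|\le 4CK^2\ve$, so velocities stay bounded and positions stay in the ball.

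Second, by the uniform version of continuous dependence of ODE solutions on parameters, applied to $F_\ve$ versus $F_0$ using the $C^2$ bounds on $\Gamma$, the exponential map $\exp{0}^\ve(v)=x(1)$ for $|v|\le\rho_0$ is $C^1$-close to the map $v\mapsto v$. Concretely, I will differentiate the geodesic equation with respect to the initial velocity; the variational equation is $\ddot J=-D\Gamma(J)(\dot x,\dot x)-2\Gamma(\dot x,\dot J)$. Its coefficients are $O(\ve)$ by Lemma~\ref{lemma-christoffel}, so Gronwall gives $\|D\exp{0}^\ve-\mathrm{Id}\|\le C'\ve$ on $\{|v|\le\rho_0\}$.

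Third, I will conclude injectivity with a quantitative inverse function theorem. A map $E$ on the convex ball $\{|v|\le\rho_0\}$ with $\|DE-\mathrm{Id}\|\le 1/2$ is injective, since $|E(v)-E(w)-(v-w)|\le\tfrac12|v-w|$. It is also a diffeomorphism onto its image, which contains the ball of radius $\rho_0/2$. So for $\ve$ small, $\exp{0}^\ve$ is a diffeomorphism on a ball of radius comparable to $\rho_0$. This gives a lower bound on the injectivity radius at every point, since the injectivity radius of a complete manifold (Lemma~\ref{lemma-complete}(1)) is at least the radius of any ball on which $\exp{x}$ is a diffeomorphism. The latter fact also follows from the uniqueness of minimizing geodesics in that range.

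The main obstacle is uniformity: every constant must be independent of $x_0\in\mm$, including near $\sing$. This is exactly what the rescaling $\psi$ and the $\ve$-bounds of Proposition~\ref{prop-derivative-new-metrics} and Lemma~\ref{lemma-christoffel} provide. One technical point needs care: the inverse-matrix bounds must be uniform, and the ball $B_1$ in the chart must contain the relevant $\hatg^\ve$-balls. I would handle both via the factor-$8$ comparisons from Lemma~\ref{lemma-complete}(2).
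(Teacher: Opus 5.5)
Your proposal is correct and follows essentially the same route as the paper: work in the rescaled chart $\psi$, and compare the geodesic ODE $F_\varepsilon$ and its variational (derivative) equation with the flat system $F_0$ via Gr\"onwall, using the $O(\varepsilon)$ Christoffel bounds of Lemma~\ref{lemma-christoffel}; this shows the exponential map is a $C^1$-small perturbation of the flat one, hence still a diffeomorphism on a ball of radius independent of $x_0$. Your explicit short-time confinement of geodesics to the chart and the quantitative criterion $\|DE-\mathrm{Id}\|\le 1/2$ on a convex ball make precise two steps that the paper's argument leaves implicit.
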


\begin{proof}
We consider solutions of the ODE with initial condition $(0,\ldots,0,y)=(0_n,y)$ at initial time $t=0$, where $y\in\R^n$. 
Let $\Phi(t,y,\ve)$ be the solution of this problem. The exponential map at $x_0$ near $0\in T_{x_0}M$ is then $\operatorname{exp}_{x_0}(ty)=\Phi(t,y,\ve)$ for all $0\le t\le t_0(y,\ve)$.

It is easy to see that there is some positive function $t_0(\ve)$ such that $\Phi(t,y,\ve)$ is well-defined for all $\|y\|\le 1$ and $0\leq t\leq t_0(\ve)$ (we omit the dependence on $x_0$). We assume that $t_0$ is maximal (possibly equal to $+\infty$). Note that $t_0(0)=+\infty$ as the solution is then constant. As long as the solution exists, we have the integral equation
$$
  \Phi(t,y,\ve)=(0_n,y)+\int_0^t F_\ve(\Phi(s,y,\ve))ds.
$$

We observe that the solutions associated to $F_0$ and to $F_\ve$ are both well-defined in $(-t_1,t_1)$, where $t_1:=t_0(\ve)$ if this is finite, and otherwise taking $t_1>0$ arbitrarily large. 
Therefore the difference function $\alpha(t)=\|\Phi(t,y,\ve)-\Phi(t,y,0)\|$ satisfies for all $0\le t\le t_1$
\begin{align*}
&\ \alpha(t)\leq \int_0^t \|F_{\ve}(\Phi(s,y,\ve))-F_{0}(\Phi(s,y,0))\|ds\\
&\leq C\int_0^{t}[\alpha(s)+|\ve|]ds
\leq Ct_1|\ve|+\int_0^t C\alpha(s)ds.
\end{align*}
By the integral Grönwall inequality, we obtain that 
$$
\forall 0\le t<t_1\quad \alpha(t)\leq Ct_1|\ve|e^{Ct}\leq \left(Ct_1e^{Ct_1}\right) |\ve|.
$$
In particular, if $t_1<\infty$, then $\alpha(t)$ hence $\Phi(t,t,\ve)$ stay bounded up to $t_1$, hence $t_0$ must be infinite.

Since the derivative of $\Phi$ also satisfies an ODE defined by a $C^1$ function (a derivative of $F_\varepsilon$),
an analogous argument
applies to the derivative map. We conclude that, there is some $\overline{C}$ depending only on the Lipschitz constant of $F_\ve$ such that
$$
\|\Phi(\cdot,\cdot,\ve)-\Phi(\cdot,\cdot,0)\|_{C^1}\leq 2\overline{C}|\ve|,
$$
where the norm is taken in $[0,t_1]\times B_1$. Since $\Phi(t_1,\cdot,0)$ is a diffeomorphism
from $B_1$ onto its image, the same is true of $\Phi(t_1,\cdot,\ve)$ when $\ve$ is small.

This shows that  if $\ve_0$ is small, then the injectivity radius of $\widehat{g}^\varepsilon$ is larger than $t_1$ for all $\ve<\ve_0$. Since $\g^\ve$ is isometric
to $\widehat{g}^\ve$ via the homothety $\psi$, we obtain the same property
for $\g^\ve$.
\end{proof}

Now we obtain estimates for $X$ with respect to $\g^\ve$. Recall
that $\mathfrak r(x)=\tfrac{1}{20L}\|X(x)\|$.

\begin{lemma}\label{lemma-estimate-X}
The following estimates hold for $X$.
\begin{enumerate}[{\rm (1)}]
\item $\tfrac{1}{2\ve}\leq \inf_{x\in\mm}\vertiii{X(x)}_{\ve}\leq \sup_{x\in\mm}\vertiii{X(x)}_{\ve}\leq \tfrac{2}{\ve}$.
\item $\vertiii{DX_x}_\ve=\|DX_x\|$ for all $x\in\mm$; in particular,
$\vertiii{DX_x}_\ve$ is uniformly bounded. 
\item There are $C,\mathfrak r>0$ such that for all $x\in\mm$ it holds
$\Hol{\beta}(DX) <C$ on ${\overline B}_\ve(x,\mathfrak r)$.
\end{enumerate}
\end{lemma}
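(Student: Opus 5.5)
(1) follows directly from the definition of the rescaled norm and Proposition~\ref{lemma-f}(1). Since $\vertiii{X(x)}_\ve = \|X(x)\|/(\ve f(x))$ and $\tfrac12\le f(x)/\|X(x)\|\le 2$, we get $\tfrac{1}{2\ve}\le \vertiii{X(x)}_\ve\le \tfrac{2}{\ve}$ for every $x\in\mm$.

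(2) rests on the observation that $\g^\ve$ is conformal to $g$. Hence at each point $x$ the norm $\vertiii{\cdot}_\ve$ on $T_xM$ is the constant multiple $(\ve f(x))^{-1}\|\cdot\|$. For the linear map $DX_x:T_xM\to T_xM$ this factor appears in both the numerator and the denominator of the operator norm and cancels, so $\vertiii{DX_x}_\ve=\|DX_x\|$. Here $DX_x$ is read in the fixed charts, i.e.\ with the flat connection of the chart. If one insists on the Levi-Civita covariant derivative of $\g^\ve$, the difference is a Christoffel term $\Gamma(X,\cdot)$. Lemma~\ref{lemma-christoffel} together with (1) bounds it by $C\ve\cdot\tfrac{2}{\ve}=2C$, after the homothety $\psi$ has been taken into account. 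So uniform boundedness holds either way, and $\|DX\|\le \|X\|_{C^1}$ because $M$ is compact.

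(3) is the substantial part. I will compare $\g^\ve$-distances with $g$-distances using Lemma~\ref{lemma-complete}(2). Fix $\mathfrak r>0$, a constant independent of $x$. If $\dd_\ve(x,y)<\mathfrak r$, then $d(x,y)\le 8\ve f(x)\mathfrak r$, and this lies well inside $B(x,\mathfrak r(x))$ once $\ve\mathfrak r$ is small compared with $1/(20L)$. Justifying that $\overline B_\ve(x,\mathfrak r)\subset B(x,\mathfrak r(x))$ requires a short continuity argument: a $\g^\ve$-short path cannot leave $B(x,\mathfrak r(x))$, because near the boundary of that ball the comparison $\dd_\ve\ge d/(8\ve f(x))$ already forces $\g^\ve$-length larger than $\mathfrak r$. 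On this ball, and in chart coordinates where $DX$ is compared by parallel transport in the chart,
\[
\|DX_y-DX_z\|\le \Hol{\beta}^g(DX)\, d(y,z)^\beta\le \Hol{\beta}^g(DX)\,(8\ve f(x))^\beta\, \dd_\ve(y,z)^\beta .
\]
Since $f$ is bounded on $\mm$, the factor $(8\ve f(x))^\beta$ is uniformly bounded, and even small as $\ve\to0$. Together with the norm equality from (2), this gives $\Hol{\beta}(DX)<C$ on $\overline B_\ve(x,\mathfrak r)$.

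The main obstacle is making (3) robust to how $DX$ at different points is compared in $(\mm,\g^\ve)$, namely by chart identification versus parallel transport for $\g^\ve$. For this I would again use the homothety $\psi$ to the unit ball, where $\hatg^\ve$ is $C^2$-close to a constant metric by Proposition~\ref{prop-derivative-new-metrics}. On that ball, $\g^\ve$-parallel transport differs from the chart identity by $O(\ve)\cdot\dd_\ve$. The extra error terms are then bounded by $\|DX\|\cdot O(\ve)\,\dd_\ve(y,z)\le O(1)\,\dd_\ve(y,z)^\beta$ on a ball of bounded radius, which preserves the uniform bound.
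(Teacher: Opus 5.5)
Your proposal is correct and follows essentially the same route as the paper: (1) and (2) come from the conformal factor $(\ve f)^{-1}$ being controlled by Proposition~\ref{lemma-f}(1) or canceling in the operator norm, and (3) comes from the inclusion $\overline B_\ve(x,\mathfrak r)\subset B(x,\mathfrak r(x))$ via the comparison of Lemma~\ref{lemma-complete}(2), followed by the $g$-H\"older bound for $DX$, which gains the factor $(\ve f(x))^\beta$. Your extra remarks fill in details the paper leaves implicit without changing the argument: the path argument for the ball inclusion, the comparison of norms at different base points, and the chart-derivative versus Levi-Civita discussion via Lemma~\ref{lemma-christoffel}.
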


Above, $\Hol{\beta}(DX)$ and the ball ${\overline B}_\ve(x,\mathfrak r)$ 
are considered with respect to the metric $\dd_\ve$.

\begin{proof}
(1) By Proposition \ref{lemma-f},
$$
\frac{1}{2\ve}\leq \vertiii{X(x)}_\ve=\frac{\|X(x)\|}{\ve f(x)} 
\leq \frac{2}{\ve}
$$
and the result follows.

\medskip
\noindent
(2) For $v\in T_xM^*$, we have
$$
\frac{\vertiii{DX_xv}_{\ve}}{\vertiii{v}_\ve}=\frac{\|DX_xv\|/\ve f(x)}{\|v\|/\ve f(x)}=\frac{\|DX_xv\|}{\|v\|}
$$
and so $\vertiii{DX_x}_\ve=\|DX_x\|$.

\medskip
\noindent
(3) Let $\mathfrak r=(160\ve L)^{-1}$. By Lemma \ref{lemma-complete}(2), we have that 
${\overline B}_\ve(x,\mathfrak r)\subset B(x,\mathfrak r(x))$.
By the Hölder continuity of $DX$ with respect to the original metric, 
there is $K>0$ such that
$$
\|DX_y-DX_z\|\leq Kd(y,z)^\beta,\text{ for all }y,z\in B(x,\mathfrak r(x)).
$$
Above, we are again assuming (by applying a chart) that $y,z\in \R^n$.
By (\ref{estimate-eps-norm}) and Lemma \ref{lemma-complete}(2), 
if $\dd_\ve(y,z)<\mathfrak r$ then
$$
\frac{\vertiii{DX_yv-DX_zv}_\ve}{\vertiii{v}_\ve}\leq 64 \frac{\|DX_yv-DX_zv\|}{\|v\|}\leq 64K d(y,z)^\beta \leq 256K(\ve f(x))^\beta\dd_\ve(y,z)^\beta.
$$
Observe that the Hölder constant of $DX$ actually improves near $\sing$. The proof follows with
$C=256K\ve^\beta \|f\|_{C^0}^\beta$.
\end{proof}

\begin{proof}[Proof of Theorem \ref{theorem-metric}]
Fix $\ve>0$ small enough, and let $\g=\g^\ve$. For $\mm$, we check properties (1)--(3) in the definition of tame manifold in page
\pageref{def-tame-manifold}:
\begin{enumerate}[$\circ$]
\item Property (1) follows from Lemma \ref{lemma-complete}(1).
\item Property (2) follows from Proposition \ref{prop-derivative-new-metrics}.
\item Property (3) follows from Proposition \ref{prop-inj-radius}.
\end{enumerate}
The relation between $d$ and $\dd$ stated in part (1) follows directly from Lemma \ref{lemma-complete}(2). Finally, part (2) 
follows from Lemma \ref{lemma-estimate-X}, which finishes
the proof of the theorem.
\end{proof}

\section{Symbolic dynamics for tame flows: proof of Theorem \ref{theorem-coding}}\label{section-theorem-coding}

\subsection{Recap from \cite{BCL23,LMN}}

In this section we recall the main ingredients used in the proofs of \cite{BCL23,LMN}, which assume $M$ to be a closed manifold, and then we explain the changes needed to obtain similar results for tame flows on tame manifolds.

We consider a tame manifold which we denote as previously as $(\mm,\g)$ (even if there is no longer any metric $g$) and a tame vector field $X$ on it, see the definitions in Section \ref{section-scheme}.
Let $\dd$ be the distance on $\mm$ induced by $\g$.
In the following, unless otherwise specified, all diameters of subsets of $\mm$, as well as all Hölder and Lipschitz norms of functions defined on $\mm$ or the exponential maps are taken with respect to the metric $\dd$. 
We denote balls in this metric by $\overline B(x,r)$.

By the tameness of $(\mm,\g)$, there are constants $\rho_1,\rho_2>0$ such that for every $x\in \mm$ the exponential map 
$\exp{x}:B(0,\rho_1)\to \mm$ is a diffeomorphism onto its image, which contains $\overline B(x,\rho_2)$, and with $C^2$ norm uniformly bounded by a constant that does not depend on $x$. Furthermore, these maps satisfy
assumptions (Exp1)-(Exp4) from \cite{BCL23,LMN}.

\medskip

\noindent
{\sc $\rho$-transverse disc:} A codimension one open disc $D \subset \mm$ is 
{\em $\rho$-transverse} if:
\begin{enumerate}[$\circ$]
    \item $D$ is compactly contained in a $C^\infty$ codimension one submanifold of $\mm$.
    \item $\mathrm{diam}(D) < 4 \rho$.
    \item For every $x \in D$, $\angle (X (x), T_x D^\perp ) < \rho$.
\end{enumerate}
We call $\rho$ the size of $D$. Every $\rho$-transverse disc $D$ defines a {\em flow box} $\varphi^{[-4 \rho, 4 \rho]} D$. 
If $\rho > 0$ is small enough, the map $(y,t) \in D \times [-4 \rho, 4 \rho] \mapsto \varphi^t (y)$ is a diffeomorphism onto the flow box $\varphi^{[-4 \rho, 4 \rho]} D$. We denote its inverse by $x \in \varphi^{[-4 \rho, 4 \rho]} D \mapsto (\mathfrak{q}_D (x), \mathfrak{t}_D (x))$, where $\mathfrak{q}_D: \varphi^{[-4 \rho, 4 \rho]} D \rightarrow D$ and $\mathfrak{t}_D: \varphi^{[-4 \rho, 4 \rho]} D \rightarrow [- 4 \rho, 4 \rho]$ are $C^{1+\beta}$.
We have the following uniform control on these maps.

\begin{lemma}\label{lemma-local-coord}
There are $\rho,L>0$ depending on $\mm$ and $X$ such that for every $\rho$-transverse discs $D, \ D'$, the above map $(y,t)\in D\times[-4\rho,4\rho]\mapsto \varphi^t(y)$ is a diffeomorphism satisfying:
\begin{enumerate}[{\rm (1)}]
    \item $\|\mathfrak q_{D}\|_{C^{1+\beta}} < L$ and  $\|\mathfrak t_{D}\|_{C^{1+\beta}}<L$.
    \item The map $\mathfrak{q}_D$ has a Lipschitz constant smaller than $2$.
    \item If $D'$ intersects the flow box $\varphi^{[-4 \rho, 4 \rho]} D$, then the restriction to $D'$ of the map $\mathfrak{t}_D$ has a Lipschitz constant smaller than $1$.
\end{enumerate}
\end{lemma}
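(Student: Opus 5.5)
The plan is to reduce everything to a uniform local problem in exponential charts and then run the inverse function theorem with constants that do not depend on the base point. Tameness of $(\mm,\g)$ gives $\rho_1,\rho_2>0$ such that each $\exp{x}:B(0,\rho_1)\to\mm$ is a diffeomorphism whose $C^2$ norm and inverse $C^2$ norm are bounded independently of $x$. Tameness of $X$ gives three uniform facts: $0<a\le\vertiii{X}\le b$; $DX$ is bounded; and $DX$ is $\beta$-H\"older with constant $C$ on balls of radius $r$.

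Pulled back by $\exp{x}$ for any $x\in\mm$, the vector field $X$ becomes $\widetilde X_x$ on $B(0,\rho_1)\subset\R^{n}$, with $\|\widetilde X_x\|_{C^{1+\beta}}\le K$ and $\|\widetilde X_x\|\ge a/2$, where $K$ is uniform in $x$. The flow $\widetilde\vf^t_x$ is then defined for $|t|\le 4\rho$ on $B(0,\rho_1/2)$ once $\rho<\rho_1/(16b)$. Gr\"onwall and the variational equation give uniform bounds on $\|\widetilde\vf_x\|_{C^{1+\beta}}$ on $[-4\rho,4\rho]\times B(0,\rho_1/2)$, with $D\widetilde\vf^t_x$ being $O(\rho)$-close to the identity.

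Given a $\rho$-transverse disc $D$, I choose $x\in D$ and work in the chart $\exp{x}$. Since $\mathrm{diam}(D)<4\rho$ and $\angle(X,T D^\perp)<\rho$, the disc $D$ is, for $\rho$ small, a graph over (a subset of) the hyperplane $X(x)^\perp$ with small slope. Consider $\Psi(y,t)=\vf^t(y)$ on $D\times[-4\rho,4\rho]$. Its derivative is $D\Psi_{(y,t)}(v,s)=D\vf^t_y v+sX(\vf^t y)$. Because $D\vf^t$ is $O(\rho)$-close to the identity, $X$ varies by $O(\rho)$ along the box, and $X$ makes angle $<\rho$ with the normal of $D$, this derivative is a uniformly small perturbation of the isomorphism $T_yD\oplus\R\to T\mm$, $(v,s)\mapsto v+sX(y)$. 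That isomorphism has inverse norm bounded by roughly $\max(1,1/a)$. The quantitative inverse function theorem then gives a $C^{1+\beta}$ local inverse with uniform bounds. This yields (1), provided $\Psi$ is injective.

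Injectivity is the step I expect to be the main obstacle, since it is global on the box. Suppose $\vf^t(y)=\vf^{t'}(y')$. Then $y'=\vf^{t-t'}(y)$ with $|t-t'|\le 8\rho$. If $y\ne y'$, the orbit segment from $y$ to $y'$ has length comparable to $|t-t'|$ and direction within $O(\rho)$ of the normal to $D$. Meanwhile $y,y'\in D$ are joined by a path in $D$ that is nearly orthogonal to $X$. In the chart, the linear functional $\langle\cdot,X(x)\rangle$ is nearly constant on $D$, because its variation there is $O(\rho)\cdot 4\rho$. Along the orbit, however, it changes by at least $(a^2/2)|t-t'|$. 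These two facts force $t=t'$ and hence $y=y'$. The smallness of $\rho$ here depends only on $a,b,K,\rho_1$, so it is uniform.

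For (2), note that $\mathfrak q_D(z)=\vf^{-\mathfrak t_D(z)}(z)$. Hence
\[
D\mathfrak q_D=D\vf^{-\mathfrak t_D}-X\otimes D\mathfrak t_D .
\]
Here $D\mathfrak t_D$ is essentially the projection onto the $X$ direction along $TD$, divided by $\vertiii{X}$. Composing with $D\vf^{-t}=I+O(\rho)$ gives $D\mathfrak q_D\approx$ the projection onto $TD$ along $X$. Since the angle between $X$ and $TD^\perp$ is less than $\rho$, this projection has norm $\le 1+O(\rho)<2$.

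For (3), $D'$ is also nearly orthogonal to $X$. The box has diameter $O(\rho)$ and $X$ is Lipschitz, so at a point $z\in D'\cap\vf^{[-4\rho,4\rho]}D$ the vectors $X(z)$ and $X(x)$ differ by $O(\rho)$, and so do the normals of $D$ and $D'$. Therefore $D\mathfrak t_D|_{TD'}$ has norm $O(\rho)/a<1$ for $\rho$ small.

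Finally, I take $\rho$ to be the minimum of the finitely many smallness thresholds above and $L$ to be the resulting uniform bound. Both depend only on the tameness constants, which gives the claimed uniformity.
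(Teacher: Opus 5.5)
Your overall route, uniform exponential charts plus a quantitative inverse function theorem with tameness making every threshold independent of the base point, is the same as the paper's. The paper's proof just says the compact-case arguments of \cite{Lima-Sarig} and \cite{BCL23} go through with uniform constants.

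\textbf{The injectivity step has a genuine gap.} You combine two facts: $\ell=\langle\cdot,X(x)\rangle$ varies by $O(\rho)\cdot 4\rho$ on $D$, and $\ell$ grows by at least $(a^2/2)|t-t'|$ along the orbit. These only give $|t-t'|=O(\rho^2)$, not $t=t'$. What you need is the Lipschitz bound $|\ell(y)-\ell(y')|\le C\rho\,|y-y'|$ for $y,y'\in D$. Combined with $|y-y'|\le b|t-t'|$, it yields $(a^2/2)|t-t'|\le C\rho b\,|t-t'|$, hence $t=t'$. That bound says $D$ is a graph of Lipschitz constant $O(\rho)$ over its projection to $X(x)^\perp$. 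A pointwise slope bound gives neither this nor even that $D$ is a graph, and the definition of $\rho$-transverse disc gives nothing more.

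Injectivity in fact fails. In flat $\mathbb{R}^3$ with $X=\partial_z$ (a tame pair), let $r=s=\rho/2$, $0<w\ll\rho$, and
$D=\{((r+u)\cos\phi,(r+u)\sin\phi,sr\phi): -0.1<\phi<2\pi+0.1,\ |u|<w\}$.
This helicoidal strip has the following properties:
it is an embedded open disc, since the height determines $\phi$;
it is compactly contained in a slightly longer strip;
its diameter is $<4\rho$, extrinsically and intrinsically;
its normal makes an angle of about $\rho/2<\rho$ with $X$.
Yet $\varphi^{\pi\rho^2/2}(r,0,0)=(r,0,\pi\rho^2/2)\in D$.

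\textbf{The uniform $C^{1+\beta}$ bound in (1) has a parallel gap.} The quantitative inverse function theorem gives uniform bounds only if $D$ itself has uniformly bounded $C^{1+\beta}$ geometry, which is not part of the definition. Take $D=\{(x,y,A\sin kx):x^2+y^2<\rho^2\}$ with $Ak=\rho/2$ and $X=\partial_z$. Then $TD$ turns by an angle of order $\rho$ over distances of order $1/k$. So the $\beta$-H\"older seminorm of $D\mathfrak q_D$, which at points of $D$ is the projection onto $TD$ along $X$, is of order $\rho k^\beta\to\infty$.

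\textbf{What is missing is a uniform normalization of the discs.} This is absent from the statement as written, and the paper's proof by citation does not address it. For example, require $D\subset\mathrm{exp}_p(\operatorname{graph} h)$ with $h$ defined on a ball in $X(p)^\perp$, $\mathrm{Lip}(h)=O(\rho)$, and $\|h\|_{C^{1+\beta}}$ uniformly bounded. The flat discs $\mathrm{exp}_p\{v\perp X(p):\|v\|<r\}$ used in the compact-case flow box lemmas are the model.

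Under such a hypothesis your plan works. The same Lipschitz-graph control also turns your pointwise bounds in (2) and (3) into actual Lipschitz constants, which pointwise bounds alone do not give without a path argument. For (3), the cleanest way is to compare the heights of $z_1,z_2\in D'$ and of $\mathfrak q_D(z_1),\mathfrak q_D(z_2)\in D$ exactly as in the injectivity argument. This gives $(a^2/2)|\mathfrak t_D(z_1)-\mathfrak t_D(z_2)|\le C\rho\,\bigl(|z_1-z_2|+|\mathfrak t_D(z_1)-\mathfrak t_D(z_2)|\bigr)$.
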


\begin{proof}
For (1), the same proof as in \cite[Lemmas 2.2 and 2.3]{Lima-Sarig} applies, noting that constants defined there by minimum/maximum over finitely many charts are now bounded away from zero and infinity due to the uniformity of $C^2$ norms of the exponential maps and of the $C^{1+\beta}$ norm of $X$.  
Parts (2) and (3) are proved as in \cite[Lemma 2.1]{BCL23}.
\end{proof}

\medskip
\noindent
{\sc Global Poincaré section:} We say that the countable union $\Lambda = \bigcup_{i\geq 1} D_i$ of transverse discs $D_1,\ldots$ is a {\em global Poincaré section} if
there is $\rho>0$ such that
$$
\mm=\bigcup_{i\geq 1}\vf^{(0,\rho]}D_i.
$$
The {\em return time function} $r_{\Lambda}: \Lambda \rightarrow [0, \rho ]$ is defined by $r_\Lambda(x) := \inf \{ t>0 : \varphi^t (x) \in \Lambda \}$.

\begin{proposition}\label{prop-sections}
There are global Poincaré sections $\Lambda=\bigcup_{i\geq 1} D_i$ and $\Sh = \bigcup_{i\geq 1} E_i$
where $D_i,E_i$ are transverse discs of small size with $D_i\subset E_i$,
such that $d(\Lambda,\partial\Sh)>0$ and:
\begin{enumerate}[{\rm (1)}]
\item {\sc Partial order:} For all $i \neq j$, at least one of the sets $\overline{E_i} \cap \varphi^{[0, 4 \rho]} \overline{E_j}$ or $\overline{E_j} \cap \varphi^{[0, 4 \rho]} \overline{E_i}$ is empty; in particular $\overline{E_i} \cap \overline{E_j} = \emptyset$.
\item {\sc Uniform disjointness:} $\exists R_0>0$ such that $\inf r_\Sh>R_0$ and each $D_i$ is a $R_0$-transverse disc. 
\item {\sc Local finiteness:} $\exists N\geq 1$ such that $\#\{j\geq 1:\vf^{[-\rho,\rho]}E_i\cap E_j\not=\emptyset\}\leq N$, $\forall i\geq 1$.
\end{enumerate}
\end{proposition}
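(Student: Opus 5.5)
We will adapt the construction of global Poincaré sections from \cite[Section 2]{BCL23} (itself following \cite{Lima-Sarig}), replacing the finite covering of a compact manifold by a uniformly locally finite countable covering of $\mm$. The key tool is tameness. By Lemma~\ref{lemma-local-coord} and the bounds on $\exp{x}$, all constants of the flow-box construction (size $\rho$, Lipschitz constants of $\mathfrak q_D,\mathfrak t_D$, lower bound of $\|X\|$, bounds on $DX$) are uniform in $x\in\mm$. Bounded geometry then gives volume control: there are $0<v_1\le v_2$ with $v_1 r^n\le \mathrm{vol}(\overline B(x,r))\le v_2 r^n$ for all $x$ and all $r\le\rho_2$.

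\textbf{Step 1 (seeds).} Fix a small $\delta\ll\rho$ and choose a maximal $\delta$-separated set $\{x_i\}_{i\ge1}\subset\mm$. It is countable, and the balls $\overline B(x_i,\delta)$ cover $\mm$. By the volume bounds, each ball of radius $10\rho$ contains at most $N_0$ points $x_i$, with $N_0$ independent of the center; this is the source of the local finiteness in (3). At each $x_i$ take the disc $E_i^0=\exp{x_i}(B(0,2\delta')\cap X(x_i)^\perp)$, with $\delta\ll\delta'\ll\rho$. By uniform $C^2$ bounds these are $\rho$-transverse. Every orbit segment of length $\rho/2$ starting in $\overline B(x_i,\delta)$ crosses $E_i^0$, so $\mm=\bigcup_i\vf^{(0,\rho]}E_i^0$.

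\textbf{Step 2 (partial order and disjointness).} Following \cite{BCL23}, we make the discs pairwise "ordered" by pushing them along the flow. Color the seeds with finitely many colors $c(i)\in\{1,\dots,K\}$ so that seeds at distance $<10\rho$ receive different colors. This is possible greedily, since the conflict graph has degree $\le N_0$, so $K=N_0+1$ colors suffice. Replace $E_i^0$ by $\vf^{\tau c(i)}E_i^0$ with $\tau=\rho/(10K)$. Then two discs whose flow boxes meet have different colors, hence are separated in time by at least $\tau$. If $\delta'$ is small compared with $\tau$, Lemma~\ref{lemma-local-coord}(3) shows that $\mathfrak t_{E_j}$ restricted to $E_i$ varies by less than $\tau/2$. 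This yields (1) and the uniform lower bound $\inf r_{\Sh}>R_0$ in (2). We then shrink to $D_i\subset E_i$ of half the radius. The covering property must survive the shrinking and the pushing, which we arrange by choosing $\delta\ll\delta'$. This gives $d(\Lambda,\partial\Sh)\ge\delta'/2>0$.

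\textbf{Step 3 (local finiteness) and main obstacle.} Suppose $\vf^{[-\rho,\rho]}E_i\cap E_j\neq\emptyset$. Since $\|X\|$ is bounded, $\dd(x_i,x_j)\le C\rho$, so by Step 1 there are at most $N$ such $j$. The main obstacle is Step 2. In \cite{BCL23} the ordering is obtained inductively over a finite family, possibly by removing pieces of discs, and that induction does not terminate globally here. The coloring trick replaces the induction by a uniformly finite local one. We must check carefully that a shifted disc $\vf^{\tau c(i)}E_i^0$ is still transverse and still of uniformly controlled size. This holds because $|\tau c(i)|\le\rho/10$ and $DX$ is bounded. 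We must also check that discs of different colors cannot intersect. That check uses the nondegenerate time separation together with the uniform flow-box charts of Lemma~\ref{lemma-local-coord}.
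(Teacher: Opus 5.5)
Your Step 2 has a genuine gap. The inference ``different colors, hence separated in time by at least $\tau$'' would only hold if all the discs $E_i^0$ started on a common flow-time level. Your seeds, however, form a maximal $\delta$-separated set in the whole $n$-dimensional manifold $\mm$, so they are also spread along flow lines. The initial flow-time offset $s_{ij}$ between $E_i^0$ and $E_j^0$ is therefore arbitrary. After the shifts the offset is $s_{ij}+\tau(c(j)-c(i))$, and a coloring, which only sees the conflict graph, cannot stop it from vanishing. For instance, take $x_j=\vf^{m\tau}(x_i)$ with $\delta\lesssim m\tau<\rho/10$, and suppose the greedy coloring gives $c(i)-c(j)=m$. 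Then $\vf^{\tau c(j)}(x_j)=\vf^{\tau c(i)}(x_i)$, so the two shifted discs share their center and (1) fails.

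Independently, your parameters are inconsistent. Since $K>N_0\gtrsim(\rho/\delta)^n$, you get $\tau=\rho/(10K)\lesssim\delta(\delta/\rho)^{n-1}\ll\delta\ll\delta'$. Yet you need $\delta'\ll\tau$, because Lemma~\ref{lemma-local-coord}(3) only bounds the variation of $\mathfrak t_{E_j}$ on $E_i$ by roughly $\operatorname{diam}(E_i)\approx 4\delta'$. Property (2) also needs $\operatorname{diam}(D_i)<4R_0<4\inf r_{\Sh}$. In fact, with seeds of density $\approx\delta^{-n}$ and discs of radius $\approx\delta'\gg\delta$, an orbit segment of length $T$ meets on the order of $T(\delta')^{n-1}/\delta^n$ discs. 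Hence $\inf r_{\Sh}\ll\delta'$ however the shifts are chosen: the discs are stacked too densely along the flow for (2) to hold.

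What is missing is the mechanism of the paper's proof, which follows \cite[Lemma 2.7]{Lima-Sarig}. First, put the centers on nets of a locally finite family of transverse discs of size $\rho_2$, one for each ball of a cover $\{\overline B(x_i,\rho_2)\}$ of multiplicity $B$. Then the number of small discs lying over any point is bounded in terms of $B$ only, independently of the final size $R_0$. Second, choose each displacement $t_i\in[-r_0,r_0]$ according to the actual positions of the neighboring discs, not according to a color. Each of the boundedly many neighbors excludes a parameter set of length $O(R_0)$. So, with $r_0$ fixed in terms of $B$ and then $R_0\ll r_0$, the good parameters contain an interval of length $\approx R_0$ whatever the other $t_j$ are.

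This also shows that the obstacle you identify is not real. Choosing the $t_i$ one at a time along an enumeration of the countable family is harmless. Each choice involves only finitely many previously placed neighbors, and the conflict condition is symmetric, so no global termination is needed.

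A minor point: in Step 1, a forward orbit segment starting in $\overline B(x_i,\delta)$ above $E_i^0$ does not cross $E_i^0$. The covering $\mm=\bigcup_i\vf^{(0,\rho]}E_i^0$ should be argued by applying the ball argument to $\vf^{-\rho/2}(y)$ instead.
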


\begin{proof}
The proof for non-singular $C^{1+\beta}$ vector fields on a $C^\infty$ closed manifold $M$ was detailed in \cite[Lemma 2.7]{Lima-Sarig}; we follow the same approach, with some modifications as we now explain.

Let us summarize the proof of \cite[Lemma 2.7]{Lima-Sarig}, in our notation. It depends on four parameters $R_0<\rho_0\ll r_0\ll \rho_2$, as follows:

\smallskip
\noindent
Step 1: Cover $M$ by finitely many flow boxes of size $\rho_2$. In each of them, consider a $\rho_2$-transverse disc. The union of theses discs is a global Poincaré section, but the discs might intersect. 

\smallskip
\noindent
Step 2: Discretize each $\rho_2$-transverse disc by a net of points of thickness $\rho_0$.
Let $\mathfs L$ be the union of these nets. 

\smallskip
\noindent
Step 3: Shift each $x\in \mathfs L$ up or down in the flow direction by a parameter 
$\tau(x)\in [-r_0,r_0]$, so that the $R_0$-transverse discs centered at the shifted points satisfy properties (1)-(3). In this argument, the choice of $r_0$ depends on the total number $B$ of flow boxes.\footnote{In summary: $\rho_2=$ size of initial transverse discs, $\rho_0=$ thickness of net of points; $r_0=$ range of flow displacements; $R_0=$ size of displaced transverse discs.}

\medskip
\noindent
New Step 1:
In the tame setting, we cannot hope for a finite cover. However,
the tameness of $M^*$ and the uniform continuity of the vector field $X$ provide two important properties:
\begin{enumerate}[$\circ$]
\item The existence of a {\em countable locally finite} cover of $\mm$ by balls of radius $\rho_2$.
\item Let $\rho>0$ small and $D,D'$ be $\rho$-transverse discs; if $D\cap D'\neq\emptyset$
then $D,D'$ are ``close to parallel'' and so there is $\tau=o(\rho)$ such that $\vf^{\tau}(D)\cap D'=\emptyset$.
\end{enumerate}
These properties are consequences of the uniform control on the exponential maps, as explained in the beginning of this section.

\smallskip
\noindent
New Step 2: We can proceed as in the setting of a closed manifold.

\smallskip

\noindent
New Step 3:
As in the case for closed manifolds, we analyze the set of small parameters $t_i$  which define $R_0$-transverse discs $\varphi^{t_i}D_i$  disjoint from all other  discs $\varphi^{t_j}D_j$. Using the estimates given by Lemma \ref{lemma-local-coord}, by analyzing the relative positions of displaced $R_0$-transverse discs, one proves that the set of ``good'' parameters $t_i\in[-r_0,r_0]$ contains an interval of size $\approx R_0$, no matter the values of the other parameters $t_j$, $j\ne i$, making up $\mathfs L$. 

 When trying to implement this idea to $\mm$, the number of flows boxes is infinite, but considering a locally finite cover allows to define $r_0$ using $B$ as the local multiplicity; indeed, two $R_0$-transverse disc can only intersect if the flow boxes of size $\rho_2$ containing them intersect. 
We then fix 
$\{\overline B(x_i,\rho_2):i\geq 1\}$ a locally finite countable cover of $\mm$, where 
$B\geq 1$ satisfies
$$
\forall i\geq 1: \#\{j\geq 1:\overline B(x_j,\rho_2)\cap \overline B(x_i,\rho_2)\not=\emptyset\}\leq B.
$$
We can then reproduce steps 2 and 3 accordingly.
\end{proof}

\medskip
\noindent
{\sc One-form $\theta$:} There is a 1-form $\theta$ on $\mm$ such that:
\begin{enumerate}[{\rm (1)}]
    \item $\theta ( X(x)) = 1$ and $\angle ( X(x),\mathrm{Ker} (\theta_x)^{\perp}) < \rho, \forall x \in \mm$.
    \item $\mathrm{Ker}(\theta_x) = T_x \Sh, \forall x \in \Sh$.
\end{enumerate}

Write $N_x=\mathrm{Ker}(\theta_x)$, and 
consider the $(n-1)$-dimensional bundle $N := \bigsqcup_{x \in \mm}N_x$.
For each $x \in \mm$, let $\mathfrak{p}_x : T_x M^* \rightarrow N_x$ be the projection to $N_x$ parallel to $X(x)$.

\medskip
\noindent
{\sc Induced linear Poincaré flow:} The \emph{linear Poincaré flow of $\varphi$ induced by $\theta$} is the flow $\Phi = \{ \Phi^t \}_{t \in \R} : N \rightarrow N$ defined by $\Phi^t (v) = \mathfrak{p}_{\varphi^t (x)} [ D \varphi^t_x (v)]$ for $v \in N_x$.

\medskip
The construction of $\theta$ and its properties can be found in 
\cite[Section 2.4]{LMN}. Recall $\mm$ is endowed with the metric $\g$
given by Theorem \ref{theorem-metric}, whose norm is denoted
by $\vertiii{\cdot}$.

\medskip
\noindent
{\sc Non-uniformly hyperbolic locus $\nuh=\nuh(\vf,\chi,\rho,\theta)$:}  It is the $\vf$-invariant set of points $x\in \mm$ for which there is a splitting $N_x=N^s_x\oplus N^u_x$ such that:
\begin{enumerate}[(NUH1)]
\item For every $v\in N_x^s$:
$$\liminf_{t \to +\infty} \tfrac{1}{t} \log \vertiii{\Phi^{-t}v} > 0 \ \mbox{ and }\ \limsup_{t \to +\infty} \tfrac{1}{t} \log \vertiii{\Phi^{t}v} \leq-\chi.
$$

\item  For every $w\in N^u_x $:
    $$\liminf_{t \to +\infty} \tfrac{1}{t} \log \vertiii{\Phi^{t}w}> 0 \ \mbox{ and }\ \limsup_{t \to +\infty} \tfrac{1}{t} \log \vertiii{\Phi^{-t}w}\leq -\chi.
$$

\item The parameters $s(x) =\displaystyle\sup_{v \in N^s_x\atop{\vertiii{v}=1}} S(x,v)$  and $u(x) = \displaystyle\sup_{w\in N^u_{x}\atop{\vertiii{w}=1}} U(x,w)$ are finite, where:
\begin{align*}
&S(x,v)^2 = 4e^{2\rho}\int_{0}^ \infty e^{2\chi t } \vertiii{\Phi^{t}v}^2dt \ \text{ and}\ \
U(x,w)^2 =4e^{2\rho}\int_{0}^ \infty e^{2\chi t } \vertiii{\Phi^{-t}w}^2 dt.
\end{align*}
\end{enumerate}

\medskip
\noindent
{\sc Holonomy maps:} For each $x\in \Lambda$, the {\em (forward) holonomy map}
at $x$ is the continuous map $g_x^+=\vf^T$, where $T$ is continuous with $T(x)=r_\Lambda(x)$; the {\em (backward) holonomy map} is similarly
defined as $g_x^{-}=\vf^{-T}$ where $T$ is continuous with 
$T(x)=\inf\{t>0:\vf^{-t}(x)\in\Lambda\}$. 

\medskip
\noindent
{\sc Pesin chart $\Psi_x$ and parameter $Q(x)$:} Every $x\in\Lambda\cap\nuh$ has a
{\em Pesin chart} $\Psi_x:B(0,10Q(x))\subset \R^{n-1}\to \Sh$ where $Q(x)$ is a parameter
for which the charts representation $\Psi_{g_x^+(x)}^{-1}\circ g_x^+\circ\Psi_x$ of the holonomy map $g_x^+$
is a perturbation of a hyperbolic linear map on $B(0,10Q(x))$;
the same holds for $g_x^{-1}$. See \cite[Theorem 3.7]{LMN}. 

\medskip
\noindent
{\sc Hyperbolicity parameters $q(x), q^s(x), q^u(x) $:}
For $x \in \nuh$, define:
\begin{align*}
q(x) &:= \ve \inf \{ e^{\ve |t|} Q(\varphi^t(x)) : t \in \mathbb{R} \},\\
q^s(x)&:= \ve \inf \{ e^{\ve |t|} Q(\varphi^t(x)) : t \geq 0 \},\\
q^u(x)&:= \ve \inf \{ e^{\ve |t|} Q(\varphi^t(x)) : t \leq 0 \}.
\end{align*}

Recall that $\dd$ is the distance induced by $\overline{g}$.

\medskip

\noindent
{\sc Recurrently non-uniformly hyperbolic locus $ \nuh^\# = \nuh^\#(\varphi, \chi, \rho, \theta, \ve) $:} It is the invariant set of points \( x \in \text{NUH} \) such that:
\begin{enumerate}[(NUH3)]
    \item[(NUH4)] $q(x) > 0 $.
    \item[(NUH5)] $\limsup\limits_{t \to +\infty} \left[q(\varphi^t(x))\wedge \tfrac{1}{\dd(\vf^t(x),x)}\right] > 0  \mbox{ and }  \limsup\limits_{t \to -\infty} \left[q(\varphi^t(x))\wedge \tfrac{1}{\dd(\vf^t(x),x)}\right]> 0.$
\end{enumerate}

\medskip
Alternatively, $x\in\nuh^\#$ if and only if $Q$ does not decrease to zero with exponential rate faster than $\ve$ and there are $t_k,s_k\to+\infty$ and $\delta>0$ 
such that $q(\vf^{t_k}(x)),q(\vf^{-s_k}(x))>\delta$ and $\vf^{t_k}(x),\vf^{-s_k}(x)$
belong to a compact subset of $\mm$, for all $k$.

\subsection{A detailed statement}

In this section, we state and prove a detailed version of 
Theorem \ref{theorem-coding}. We begin defining a Bowen relation for flows, as introduced
in \cite{BCL23} (the original notion, for diffeomorphisms, was introduced in \cite{Boyle-Buzzi}).

\newcommand\hS{\widehat S}

Let  $T_r: S_r\to S_r$ be a suspension flow over a symbolic system $S$ that is an extension
of some flow $U:X\to X$ by a semiconjugacy map $\pi:S_r\to X$,  i.e.
$U^t\circ\pi=\pi\circ T^t_r$ for all $t\in\R$.\\

\noindent
{\sc Bowen relation:} A \emph{Bowen relation} $\sim$ for $(T_r,\pi,U)$ is a symmetric binary
relation on the alphabet of $S$ satisfying the following two properties:
\begin{enumerate}[i,]
\item[{\rm (i)}] $\forall\omega,\omega'\in S_r,\;\; \pi(\omega)=\pi(\omega')\implies \operatorname{v}(\omega)\sim\operatorname{v}(\omega')$, where $\operatorname{v}(x,t):=x_0$ for $x\in S$;
\item[{\rm (ii)}] $\exists \gamma>0$ with the following property:
$$\forall\omega,\omega'\in S_r,\;\; \left[ \forall t\in\R,\; \operatorname{v}(T_r^t\omega)\sim\operatorname{v}(T_r^t\omega') \right] \implies \left[ \exists |t|<\gamma\text{ s.t. } \pi(\omega)= U^t(\pi(\omega')) \right].
$$
  \end{enumerate}

\begin{theorem}\label{t.main.tame}
Let $X$ be a tame $C^{1+\beta}$ vector field ($\beta>0$) on a tame $C^\infty$ Riemannian manifold $\mm$.
For each $\chi>0$, there exists a locally compact topological Markov flow
$(\widehat \Sigma_{\widehat r},\widehat\sigma_{\widehat r})$ with graph
$\widehat{\mathfs G}=(\widehat V,\widehat E)$ and roof function $\widehat{r}$,
and a map $\widehat \pi_{\widehat r}:\widehat \Sigma_{\widehat r}\to \mm$ such that
$\widehat \pi_{\widehat r}\circ {\widehat\sigma}_{\widehat r}^t=\vf^t\circ\widehat \pi_{\widehat r}$ for all $t\in\R$, and satisfying:
\smallskip
\begin{enumerate}[{\rm (1)}]
\item $\widehat r$ and $\widehat \pi_{\widehat r}$ are H\"older continuous.
\smallskip

\item $\widehat \pi_{\widehat r}[\widehat \Sigma_{\widehat r}^\#]=\nuh^\#$ has full measure for every $\chi$-hyperbolic measure; for every ergodic $\chi$-hyperbolic measure $\mu$,
there is an ergodic $\widehat\sigma_{\widehat r}$-invariant measure $\overline \mu$ 
on $\widehat \Sigma_{\widehat r}$
such that $(\widehat \pi_{\widehat r})_*\overline \mu=\mu$ and $h_{\overline{\mu}}(\widehat \sigma_{\widehat{r}})=h_\mu(\vf)$.
\smallskip

\item If
$(\un R,t)\in \widehat \Sigma_{\widehat r}^{\#}$ satisfies
$R_n=R$ and $R_m=S$ for infinitely many $n<0$ and $m>0$, then $\operatorname{Card}\{z\in \widehat \Sigma_{\widehat r}^\#:\widehat \pi_{\widehat r}(z)=\widehat \pi_{\widehat r}(\un R,t)\}$
is bounded by a number $C(R,S)$, depending only on $R,S$.
\smallskip

\item\label{i.splitting} There is $\lambda>0$ and for $x\in \widehat \pi_{\widehat r}(\widehat \Sigma_{\widehat r})$ there is a unique splitting
$N_{x}=N^s_{x}\oplus N^u_{x}$ such that:
\begin{align*}
\limsup_{t\to +\infty} \tfrac{1}{t}\log \vertiii{\Phi^t|_{N^s_{x}}}\leq -\lambda
\quad \text{ and }\quad\limsup_{t\to +\infty} \tfrac{1}{t}\log\vertiii{\Phi^{t}|_{N^s_{\varphi^{-t}(x)}}}\leq -\lambda\\
\quad \limsup_{t\to +\infty} \tfrac{1}{t}\log\vertiii{\Phi^{-t}|_{N^u_{x}}}\leq -\lambda
\quad \text{ and }\quad\limsup_{t\to +\infty} \tfrac{1}{t}\log \vertiii{\Phi^{-t}|_{N^u_{\varphi^t(x)}}}\leq -\lambda.
\end{align*}
The splitting is $\Phi$-equivariant, and the maps $z\mapsto N^{s/u}_{\widehat \pi_{\widehat r}(z)}$ are H\"older continuous on $\widehat \Sigma_{\widehat r}$.
\smallskip

\item\label{i.manifold} There is $\alpha>0$ and for every $z\in \widehat \Sigma_{\widehat r}$
there are $C^1$ submanifolds $V^{cs}(z),V^{cu}(z)$ passing through $x:=\widehat \pi_{\widehat r}(z)$
such that:
\begin{enumerate}[{\rm (a)}]
\item
$T_{x}V^{cs}(z)=N^s_x+\mathbb{R}\cdot X(x)$ and $T_{x}V^{cu}(z)=N^{u}_x+\mathbb{R}\cdot X(x)$.
\item For all $y\in V^{cs}(z)$, there is $\tau\in \mathbb{R}$ such that
$\dd(\varphi^t(x),\varphi^{t+\tau}(y))\leq e^{-\alpha t}$, $\forall t\geq 0$.
\item For all $y\in V^{cu}(z)$, there is $\tau\in \mathbb{R}$ such that
$\dd(\varphi^{-t}(x),\varphi^{-t+\tau}(y))\leq e^{-\alpha t}$, $\forall t\geq 0$.
\end{enumerate}
\smallskip

\item\label{i.Bowen} There is a symmetric binary relation $\sim$ on the alphabet $\widehat V$
satisfying:
\begin{enumerate}[{\rm (a)}]
\item For any $R\in \widehat V$, the set $\{S\in \widehat V:R\sim S\}$ is finite.
\item The relation $\sim$ is a Bowen relation for $(\widehat\sigma_{\widehat r},\widehat\pi_{\widehat r}|_{\widehat\Sigma^\#_{\widehat r}},\vf^t)$.
\end{enumerate}
\smallskip

\item\label{i.canonical} There exists a measurable set $\mathfs R$ with a measurable partition
indexed by $\widehat V$, which we denote by $\{R:R\in\widehat V\}$, such that:
\begin{enumerate}[{\rm (a)}]
\item The orbit of any point $x\in \nuh^\#$ intersects $\mathfs R$.
\item The first return map $H\colon \mathfs R\to \mathfs R$ induced by $\varphi$ is a well-defined bijection.
\item For any $x\in \mathfs R$, if $\un R=\{R_n\}_{n\in\Z}$ satisfies $H^n(x)\in R_n$ for all $n\in\Z$,
then $(\un R,0)\in \widehat \Sigma^\#_{\widehat r}$ and $\widehat \pi_{\widehat r}(\un R,0)=x$.
\end{enumerate}

\item\label{i.lift}
For any compact transitive invariant hyperbolic set $K\subset \mm$ whose ergodic $\vf$-invariant
measures are all $\chi$-hyperbolic, there is a transitive invariant compact set 
$X\subset \widehat \Sigma_{\widehat r}$ such that $\widehat\pi_{\widehat r}(X)=K$.
\end{enumerate}
\end{theorem}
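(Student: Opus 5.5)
The plan is to follow the construction of \cite{BCL23,LMN} step by step. Since all local arguments there go through once the constants are uniform, I will only check that every constant used is uniform over $\mm$ and replace each compactness argument by a local finiteness argument.

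First, fix $\rho$ and $L$ as in Lemma~\ref{lemma-local-coord}. Take the global Poincaré sections $\Lambda\subset\Sh$ from Proposition~\ref{prop-sections} and the one-form $\theta$, so that the induced linear Poincaré flow $\Phi$ is defined. Tameness gives the rest of the uniform input:
\begin{enumerate}[$\circ$]
\item uniformly bounded $C^2$ exponential maps, hence (Exp1)--(Exp4);
\item uniformly bounded $\|X\|$, $\|DX\|$ and local $\beta$-Hölder norm of $DX$;
\item the bounds on $\mathfrak q_D$, $\mathfrak t_D$ from Lemma~\ref{lemma-local-coord}.
\end{enumerate}
With these, the Pesin charts $\Psi_x$, the parameter $Q(x)$, and the representation of the holonomies $g_x^\pm$ as small perturbations of hyperbolic linear maps (\cite[Theorem 3.7]{LMN}) hold with the same constants on $\mm$.

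Next come the graph transforms, the admissible manifolds, the $\ve$-double charts and the shadowing and inverse problem estimates. These are all statements on balls of uniformly bounded size around single points, so they transfer verbatim.

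The first place where compactness of $M$ was used is the discretization: choosing a countable family $\mathfs A$ of $\ve$-double charts such that every $x\in\nuh^\#$ is shadowed. In \cite{BCL23,LMN} one covers a precompact parameter space of triples $(x,C(x),Q(x))$ at each scale $q\in\{e^{-\ve k/3}\}$ by finite nets. Here I instead take, for each $k$ and each disc $D_i$ of the locally finite section, a finite net of the relevant parameter set restricted to $D_i$; this is possible because $\overline{D_i}$ is compact.
\begin{enumerate}[$\circ$]
\item The union over $i$ is countable.
\item The local finiteness of the resulting graph, that is, finitely many edges into and out of each vertex, follows from the local finiteness in Proposition~\ref{prop-sections}(3) together with the uniform bound on return times. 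An edge $v\to w$ forces $w$ to lie on a disc $E_j$ meeting $\vf^{[-\rho,\rho]}E_i$, and there are at most $N$ such $j$.
\end{enumerate}
This is the step I expect to be the main obstacle. One must check that condition (NUH5), which asks for returns of $q$ to a fixed level and returns into a compact set, is exactly what makes the resulting chains relevant, i.e.\ they use only finitely many vertices infinitely often. This is why the definition of $\nuh^\#$ contains the term $1/\dd(\vf^t(x),x)$: it guarantees recurrence to a compact part of $\mm$, and hence to finitely many discs, along sequences with $q$ bounded below.

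Then I run the Bowen--Sinai refinement of the cover $\{Z(v)\}$ into a countable partition $\mathfs R$ of a subset of $\Lambda$, exactly as in \cite{BCL23,LMN}. The only thing to check is that each $Z(v)$ meets finitely many $Z(w)$ (local finiteness of the cover). This follows from the previous step, because $Z(v)\cap Z(w)\neq\emptyset$ forces comparable parameters and nearby discs. From this partition I build $\widehat{\mathfs G}$ and the Bowen relation $\sim$ (affiliation), which gives items (6) and (7).

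The remaining items are proved as in the compact case:
\begin{enumerate}[$\circ$]
\item Hölder continuity of $\widehat r$ and $\widehat\pi_{\widehat r}$, the splitting in item (4) and the manifolds in item (5) come from uniform local estimates.
\item Item (2) uses the ergodic theorem and (NUH5), which holds $\mu$-a.e. for $\chi$-hyperbolic $\mu$ by Poincaré recurrence applied to a compact set of large measure. The entropy equality then follows from finiteness-to-one.
\item Item (3) is the finiteness-to-one bound $C(R,S)$, using the finiteness of $\sim$-classes.
\item Item (8) holds because a compact $K$ meets only finitely many discs, so it is coded by a finite subgraph as in \cite{LMN}.
\end{enumerate}
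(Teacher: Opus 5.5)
Your plan is essentially the paper's: uniform local estimates from tameness, then compactness recovered in the coarse graining by localizing the parameter sets, then the distance term in (NUH5) making the shadowing $\ve$-gpo regular. You localize disc by disc where the paper adds a parameter $h$ with $\dd(x,x^*)<e^h$; the two are equivalent, since by completeness bounded subsets of $\mm$ meet only finitely many discs. Note that the regularity of the gpo is the Sufficiency property (some vertex recurs infinitely often), not Relevance as you call it. The one check you skip, which the paper makes explicit, is the inclusion $\pi[\Sigma^\#]\subset\nuh^\#$ needed for the equality in item (2). A vertex recurring infinitely often fixes the chart centres $x_n$ along a subsequence, and the inverse theorem puts $\vf^{t_n}(x)$ close to them with $p^{s/u}_n\approx p^{s/u}(\vf^{t_n}(x))$, which gives the modified (NUH5).
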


For flows in closed manifolds, this is Theorem 9.1 in \cite{BCL23,LMN}.
The relation $\sim$ is the \emph{affiliation}. 
Note that the assumption 
$\bigl[\operatorname{v}(\widehat \sigma_{\widehat r}^t(z))\sim \operatorname{v}(\widehat\sigma_{\widehat r}^t(z'))$
for all $t\in \mathbb{R}\bigr]$ consists of countably many
affiliation conditions: if $z=(\un R,s)$ and $z'=(\un S,s')$, then varying $t$ in the interval $[\widehat r_n(\un R),\widehat r_{n+1}(\un R))$
provides $i\leq \tfrac{\sup(\widehat r)}{\inf(\widehat r)}$ affiliations of the form
$R_n\sim S_{m+1},\ldots,R_n\sim S_{m+i}$.

Part~\eqref{i.canonical} provides for any
$x\in  \nuh^\#$ a particular pair $(\un R,t)\in \widehat \Sigma_{\widehat r}^\#$
such that $\widehat \pi_{\widehat r}(\un R,t)=x$
($t$ is the smallest non-negative number such that $\varphi^{-t}(x)\in \mathfs R$). We call
the pair $(\un R,t)$ the \emph{canonical lift} of $x$.
This is a measurable embedding of $\nuh^\#$ into $\widehat\Sigma_{\widehat r}$.

Part~\eqref{i.lift} is a version of \cite[Proposition 3.9]{BCS-MME} in our context, and the proof is the same of \cite[Section 9.4]{LMN}.

\begin{proof}[Proof of Theorem \ref{t.main.tame}]
We follow ipsis litteris the arguments of 
\cite{BCL23,LMN}. The main difference is that, while these later
works consider only closed manifolds, we allow for tame non-compact
manifold, so all global arguments that require the compactness
of $M$ have to be adapted. We do that by applying analogous 
arguments to an exhaustion of $\mm$ by compact balls with increasing radii, paying attention that the tameness of $\mm$ and $X$ give uniform
estimates. In the sequel, we summarize the main steps in the proof and point
when changes are needed.

Recall we have fixed:
\begin{enumerate}[$\circ$]
\item Two global Poincaré sections $\Lambda\subset\Sh$, see Proposition \ref{prop-sections}.
\item A one-form $\theta$ and its associated induced linear Poincaré flow
$\Phi$. 
\item Two parameters $\chi,\ve>0$.
\end{enumerate}

We can assume $\mm$ is connected (in the general case, we apply the construction to every connected component of $\mm$). In the sequel, we fix a point $x^*\in\mm$.

\medskip
\noindent
{\sc Step 1 (General definitions):} Introduce the overlap condition $\Psi_x^\eta\overset{\ve}{\approx}\Psi_y^{\eta'}$, $\ve$-double charts $\Psi_x^{p^s,p^u}$, the edge condition $\Psi_x^{p^s,p^u}\to \Psi_y^{q^s,q^u}$, $\ve$-generalized pseudo-orbits ($\ve$-gpo), and graph transforms associated to edges.

\medskip
This step is done exactly as in \cite{BCL23,LMN}.

\medskip
\noindent
{\sc Step 2 (Coarse graining):} Construct a countable family $\mathfs A$ of $\ve$-double charts with the following properties:
\begin{enumerate}[{\rm (1)}]
\item {\sc Discreteness:} For all $t>0$, the set $\{\Psi_x^{p^s,p^u}\in\mathfs A:p^s,p^u,\dd(x,x^*)^{-1}>t\}$ is finite.
\item {\sc Sufficiency:} If $x\in\Lambda\cap\nuh^\#$ then there is a regular $\ve$-gpo
$\un v\in{\mathfs A}^{\Z}$ that shadows $x$.
\item {\sc Relevance:} For each $v\in \mathfs A$, $\exists\un{v}\in\mathfs A^\Z$ an $\ve$-gpo
with $v_0=v$ that shadows a point in $\Lambda\cap\nuh^\#$.
\end{enumerate}

\medskip
The main difference from \cite{BCL23,LMN} is that the discreteness assumption
also controls $\dd(x,x^*)$  from above. Following \cite[Proof of Theorem 5.1]{LMN},
we obtain this by adding an extra parameter $h\in\N$ and instead of 
considering $Y_{\un \ell,m,j,k}$ we let 
$$
Y_{\un \ell,m,j,k,h}:=\left\{\Gamma(x)\in Y:
\begin{array}{cl}
e^{\ell_i}\leq\vertiii{C(f^i(x))^{-1}}<e^{\ell_i+1},&-1\leq i\leq 1\\
e^{-m-1}\leq Q(x)< e^{-m}&\\
e^{-j-1}\leq q(x)< e^{-j}&\\
\dim N_x^s = k\text{ and }\dd(x,x^*)<e^h
\end{array}
\right\}.
$$
This allows to recover the pre-compactness of these
sets, and the rest of the proof follows. 

Let $\Sigma$ be the TMS with vertex set $\mathcal A$ and edge set given by the edge condition. Then $\Sigma$ is locally compact (every vertex has finite ingoing and outgoing degree).

\medskip
\noindent
{\sc Step 3 (First coding $\pi$):} Define the coding map $\pi:\Sigma\to\Sh$, where 
$\pi(\un v)=$ unique point shadowed by $\un v$; this map is Hölder
continuous and satisfies $\pi(\Sigma^\#)\supset\Lambda\cap\nuh^\#$.

\medskip
The definition and properties of $\pi$ hold as stated in \cite{BCL23,LMN}.

\medskip
\noindent
{\sc Step 4 (Locally finite countable cover $\mathfs Z$):} The countable family $\mathfs Z=\{Z(v):v\in\mathfs A\}$
defined by $Z(v)=\{\pi(\un v):\un v\in\Sigma^\# \text{ with }v_0=v\}$ 
is locally finite with
$\Lambda\cap\nuh^\#\subset \mathfs Z\subset\Sh\cap\nuh^\#$, 
and satisfies a symbolic Markov property.

\medskip
Above, we identify $\mathfs Z$ with the union of its elements. Step 4 relies on an {\em inverse theorem}: if 
$x=\pi(\un v)$ with $\un v=\{\Psi_{x_n}^{p^s_n,p^u_n}\}_{n\in\Z}\in\Sigma^\#$, then the hyperbolicity parameters of $x$ and $x_0$ are close. The precise statement \cite[Theorem 6.1]{LMN} is technical, so we have decided not to reproduce it here. Let us just remark
why $\pi[\Sigma^\#]\subset \nuh^\#$, since the defining condition 
(NUH5) is different from \cite{LMN}. The same proof of \cite[Prop. 6.6]{LMN}
gives that $x\in\nuh$. Now, by the inverse theorem \cite[Theorem 6.1]{LMN},
there is a sequence
$t_n\to+\infty$ such that $\dd(\vf^{t_n}(x),x_n)\ll 1$ and $p^{s/u}_n\approx p^{s/u}(\vf^{t_n}(x))$ for every $n\geq 0$. Using that $\un v\in\Sigma^\#$, it follows that $x\in\nuh^\#$.   

\medskip
\noindent
{\sc Step 5 (Partition $\mathfs R$ and second coding $\wh\pi$):}
By the local finiteness, $\mathfs Z$ refines to a 
{\em countable} family $\mathfs R$ with the Markov property. The family $\mathfs R$
defines a TMF $\wh\Sigma_{\wh r}$ and a coding map $\wh\pi_{\wh r}:\wh\Sigma_{\wh r}\to\mm$.\\

We note that $\nuh^\#$ indeed carries all $\chi$-hyperbolic measures: the proof is the same of \cite[Prop. 3.1 and 3.4]{LMN}, observing that recurrence implies (NUH5) for $\mu$-a.e. $x\in\mm$.

\medskip

Proceeding as in \cite[Section 9]{LMN}, we conclude the proof of Theorem
\ref{t.main.tame}.
\end{proof}

\section{Proof of the Main Theorem}
In this section, we state and prove a detailed version of the Main Theorem (Theorem~\ref{t.main.singular}) and we explain how to derive it from Theorems~\ref{theorem-metric} and~\ref{t.main.tame}.

\subsection{Another detailed statement}
Given a vector field $X$, we define at any point $x\in M\setminus \sing$ the normal tangent space $\mathcal{N}_x=X(x)^\perp\subset T_xM$.
For $t\in \mathbb{R}$, the \emph{linear Poincar\'e flow} $\Psi=\{\Psi^t\}_{t\in\R}$ associates to any vector $v\in\mathcal{N}_x$ the orthogonal projection of $D\varphi_x^t(v)$ on $\mathcal{N}_{\varphi^t(x)}$:
$$\Psi^t(v)=D\varphi^t_x(v)-\frac{g(X(\varphi^t(x)),D\varphi^t_x(v))}{\|X(\varphi^t(x))\|^2}X(\varphi^t(x)).$$
In the terminology of Section~\ref{section-theorem-coding}, it is the linear Poincar\'e flow induced by the linear form $g(X,\cdot)$.
The \emph{rescaled linear Poincar\'e flow} $\overline\Psi=\{\overline\Psi^t\}_{t\in\R}$ is defined by conjugating by the function $\|X\|$:
$$\overline\Psi^t(v)=\|X(\varphi^t(x))\|^{-1}\cdot \Psi^t(\|X(x)\|\cdot v).$$
The following result implies the main theorem.

\begin{theorem}\label{t.main.singular}
Let $X$ be a $C^{1+\beta}$ vector field ($\beta>0$) on a $C^\infty$ closed Riemannian manifold $M$.
For each $\chi>0$, there exists a locally compact topological Markov flow
$(\widehat \Sigma_{\widehat r},\widehat\sigma_{\widehat r})$ with graph
$\widehat{\mathfs G}=(\widehat V,\widehat E)$ and roof function $\widehat{r}$,
and a map $\widehat \pi_{\widehat r}:\widehat \Sigma_{\widehat r}\to M\setminus {\rm Sing}$ which satisfy
$\widehat \pi_{\widehat r}\circ {\widehat\sigma}_{\widehat r}^t=\vf^t\circ\widehat \pi_{\widehat r}$ for all $t\in\R$, such that:
\smallskip
\begin{enumerate}[{\rm (1)}]
\item\label{i.holder.singular} $\widehat r$ is H\"older continuous, 
$\wh\pi_{\wh r}$ is $X$-scaled Hölder continuous: there are $C,\kappa>0$ such that
$$
d(\wh\pi_{\wh r}(z),\wh\pi_{\wh r}(w))\leq C\|X({\wh\pi}_{\wh r}(z))\| d_{\wh r}(z,w)^\kappa,\ \text{for all }z,w\in\widehat\Sigma_{\wh r}.
$$

\item\label{i.measures.singular} $\widehat \pi_{\widehat r}[\widehat \Sigma_{\widehat r}^\#]$ has full measure for every $\chi$-hyperbolic measure with $\mu(\sing)=0$; for every ergodic $\chi$-hyperbolic measure $\mu$ with $\mu(\sing)=0$,
there is an ergodic $\widehat\sigma_{\widehat r}$-invariant measure $\overline \mu$ 
on $\widehat \Sigma_{\widehat r}$
such that $(\widehat\pi_{\widehat r})_*\overline \mu =\mu$ and $h_{\overline{\mu}}(\widehat \sigma_{\widehat{r}})=h_\mu(\vf)$.
\smallskip

\item If
$(\un R,t)\in \widehat \Sigma_{\widehat r}^{\#}$ satisfies
$R_n=R$ and $R_m=S$ for infinitely many $n<0$ and $m>0$, then $\operatorname{Card}\{z\in \widehat \Sigma_{\widehat r}^\#:\widehat \pi_{\widehat r}(z)=\widehat \pi_{\widehat r}(\un R,t)\}$
is bounded by a number $C(R,S)$, depending only on $R,S$.
\smallskip

\item\label{i.splitting.singular} There is $\lambda>0$ and for {$x\in \widehat \pi_{\widehat r}(\widehat \Sigma_{\widehat r}^\#)$} there is a splitting
$\mathcal{N}_{x}=\mathcal{N}^s_{x}\oplus \mathcal{N}^u_{x}$ such that:
\begin{align*}
\limsup_{t\to +\infty} \tfrac{1}{t}\log \|{\overline\Psi^t|_{\mathcal{N}^s_{x}}}\|\leq -\lambda
\quad \text{ and }\quad{\limsup_{t\to +\infty}} \tfrac{1}{t}\log\|{\overline\Psi^{t}|_{\mathcal{N}^s_{\varphi^{-t}(x)}}}\|\leq -\lambda\\
\quad \limsup_{t\to +\infty} \tfrac{1}{t}\log\|{\overline\Psi^{-t}|_{\mathcal{N}^u_{x}}}\|\leq -\lambda
\quad \text{ and }\quad{\limsup_{t\to +\infty}} \tfrac{1}{t}\log \|{\overline\Psi^{-t}|_{\mathcal{N}^u_{\varphi^{t}(x)}}}\|\leq -\lambda.
\end{align*}
The splitting is unique, $\Psi$-equivariant, and the maps $x\mapsto \mathcal{N}^{s/u}_{\widehat \pi_{\widehat r}(x)}$ are $X$-scaled H\"older continuous on $\widehat \Sigma_{\widehat r}$.
\smallskip

\item\label{i.manifold.singular} There are $C,\alpha>0$ and for every $z\in \widehat \Sigma_{\widehat r}$
there are $C^1$ submanifolds $V^{cs}(z),V^{cu}(z)$ passing through $x:=\widehat \pi_{\widehat r}(z)$
such that:
\begin{enumerate}[{\rm (a)}]
\item
$T_{x}V^{cs}(z)=\mathcal{N}^s_x+\mathbb{R}\cdot X(x)$ and $T_{x}V^{cu}(z)=\mathcal{N}^{u}_x+\mathbb{R}\cdot X(x)$.
\item For all $y\in V^{cs}(z)$, there is $\tau\in \mathbb{R}$ such that
$$d(\varphi^t(x),\varphi^{t+\tau}(y))\leq C\|X(\vf^t(x))\|e^{-\alpha t}, \ \forall t\geq 0.
$$
\item For all $y\in V^{cu}(z)$, there is $\tau\in \mathbb{R}$ such that
$$
d(\varphi^{-t}(x),\varphi^{-t+\tau}(y))\leq C\|X(\vf^t(x))\|e^{-\alpha t},\ \forall t\geq 0.
$$
\end{enumerate}
\smallskip

\item\label{i.Bowen.singular} There is a symmetric binary relation $\sim$ on the alphabet $\widehat V$
satisfying:
\begin{enumerate}[{\rm (a)}]
\item For any $R\in \widehat V$, the set $\{S\in \widehat V:R\sim S\}$ is finite.
\item The relation $\sim$ is a Bowen relation for $(\widehat\sigma_{\widehat r},\widehat\pi_{\widehat r}|_{\widehat\Sigma^\#_{\widehat r}},\vf^t)$.
\end{enumerate}
\smallskip

\item\label{i.canonical.singular} There exists a measurable set $\mathfs R$ with a measurable partition
indexed by $\widehat V$, which we denote by $\{R:R\in\widehat V\}$, such that:
\begin{enumerate}[{\rm (a)}]
\item The orbit of any point $x\in \widehat \pi_{\widehat r}[\widehat \Sigma_{\widehat r}^\#]$ intersects $\mathfs R$.
\item The first return map $H\colon \mathfs R\to \mathfs R$ induced by $\varphi$ is a well-defined bijection.
\item For any $x\in \mathfs R$, if $\un R=\{R_n\}_{n\in\Z}$ satisfies $H^n(x)\in R_n$ for all $n\in\Z$,
then $(\un R,0)\in \widehat \Sigma^\#_{\widehat r}$ and $\widehat \pi_{\widehat r}(\un R,0)=x$.
\end{enumerate}

\item\label{i.lift.singular}
For any compact transitive invariant hyperbolic set $K\subset M$ whose ergodic $\vf$-invariant
measures are all $\chi$-hyperbolic {with $\mu(\sing)=0$}, there is a transitive invariant compact set 
$X\subset \widehat \Sigma_{\widehat r}$ such that $\widehat\pi_{\widehat r}(X)=K$.
\end{enumerate}
\end{theorem}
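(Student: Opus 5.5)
The plan is to apply Theorem~\ref{theorem-metric} to get the tame pair $(\mm,\g)$ with $\g=g/(\ve f)^2$, then Theorem~\ref{t.main.tame} to get $(\widehat\Sigma_{\widehat r},\widehat\sigma_{\widehat r})$, $\widehat\pi_{\widehat r}:\widehat\Sigma_{\widehat r}\to\mm\subset M$, and translate each item back to $g$. The symbolic objects do not depend on the metric: the roof function $\widehat r$, the relation $\sim$, the partition $\mathfs R$ and the first return map $H$ are the same. So items (3), (6) and (7) carry over verbatim, and the H\"older continuity of $\widehat r$ in (1) is unchanged.

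For item (2), let $\mu$ be $\chi$-hyperbolic with $\mu(\sing)=0$. Then $\mu$ is an invariant probability on $\mm$. By Remark~\ref{rmk-exponents} its exponents with respect to $\g$ exist and coincide with those for $g$, so $\mu$ is $\chi$-hyperbolic for $(\mm,\g)$. Theorem~\ref{t.main.tame}(2) then gives full measure of $\nuh^\#$ and the entropy-preserving lift. For item (8), a compact $K$ whose invariant measures give no mass to $\sing$ satisfies $K\cap\sing=\emptyset$, since otherwise a fixed point in $K$ would carry a Dirac mass. Hence $K$ is compact in $\mm$, and Theorem~\ref{t.main.tame}(8) applies.

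The metric estimates all come from Theorem~\ref{theorem-metric}(1) and Lemma~\ref{lemma-complete}(2).

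For the $X$-scaled H\"older bound in (1), I would use the tame H\"older estimate $\dd(\widehat\pi(z),\widehat\pi(w))\le C d_{\widehat r}(z,w)^\kappa$. When $d_{\widehat r}(z,w)$ is small, $\dd$ is below the comparison threshold, and Lemma~\ref{lemma-complete}(2) gives $d\le 8\ve f(x)\,\dd\le C'\|X(x)\|\,d_{\widehat r}^\kappa$. When $d_{\widehat r}$ is not small, the bound is not automatic, and this is where care is needed. I would restrict to $z,w$ with base points in a common cylinder, as in the Main Theorem's definition, and split into two cases. If the base points agree at coordinate $0$, both points lie in the same Pesin chart, whose $\dd$-diameter is uniformly small, so the comparison again applies. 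Otherwise $d_{\widehat r}$ is bounded below, and I would chain the local estimate along a flow segment of bounded length.

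For item (4), I would transfer the splitting $N=N^s\oplus N^u$ of the induced Poincar\'e flow $\Phi$ to $\mathcal N=X^\perp$ via the projection parallel to $X$. Two facts make this work. First, $\vertiii{\cdot}=\|\cdot\|/(\ve f)$ and $f\asymp\|X\|$, so $\Phi$ measured in $\vertiii{\cdot}$ equals $\overline\Psi$ up to a uniformly bounded conjugacy. Second, the angle between $N_x$ and $\mathcal N_x$ is uniformly bounded away from $\pi/2$ by property (1) of $\theta$. The exponential rates $\lambda$ are therefore preserved, and uniqueness and $\Psi$-equivariance follow. $X$-scaled H\"older continuity follows as in (1).

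For item (5), I would take the same $V^{cs},V^{cu}$. The tangent spaces correspond under the projection along $X$. The tame bound $\dd(\vf^tx,\vf^{t+\tau}y)\le e^{-\alpha t}$ lies, for $t$ large, below the comparison threshold, giving $d\le C\|X(\vf^t x)\|e^{-\alpha t}$; small $t$ is absorbed into $C$.

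The main obstacle I expect is the $X$-scaled H\"older estimate for non-nearby pairs. Lemma~\ref{lemma-complete}(2) only gives the comparison between $d$ and $\dd$ on balls of radius $\mathfrak r(x)$, so the argument depends on controlling the $\dd$-size of cylinders uniformly, via the Pesin chart size $Q\le\rho$.
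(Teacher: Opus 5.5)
Your proposal is correct and follows the paper's proof: take $\g$ from Theorem~\ref{theorem-metric}, apply Theorem~\ref{t.main.tame}, and carry (3), (6), (7) over verbatim. You then use Remark~\ref{rmk-exponents} for (2) and (8), the uniformly bounded conjugacy between $\Phi$ (measured in $\vertiii{\cdot}$) and $\overline\Psi$ (measured in $\|\cdot\|$) for (4), and the comparison $d\lesssim\|X\|\,\dd$ for (1) and (5).

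The only difference is that you are more careful than the paper, which simply concatenates Theorem~\ref{theorem-metric}(1) with the tame H\"older bound without checking the threshold $d<C^{-1}\|X(x)\|$. Your restriction of (1) to pairs in a common cylinder is genuinely necessary: for arbitrary pairs the bound fails, since $d_{\wh r}$ is bounded while $\|X(\wh\pi_{\wh r}(z))\|$ can be arbitrarily small. Your chaining step has two loose ends: pairs at different heights need not lie in one Pesin chart, and the small-$t$ range in (5) cannot literally be absorbed into $C$. Both are handled cleanly by noting that $f$ varies by a factor at most $8$ on $\g$-balls of a uniform radius, so that $d(x,y)\le C_\ell\|X(x)\|\,\dd(x,y)$ whenever $\dd(x,y)\le\ell$.
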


\begin{proof}[Proof of Theorem \ref{t.main.singular}]
We consider $M^*=M\setminus \sing$ endowed with a metric $\overline g$ as given by Theorem~\ref{theorem-metric}.
We let $\theta,\Phi,\nuh,\nuh^\#$ as built and defined in the previous section.
Apply Theorem \ref{t.main.tame} to $X$ considered as a vector field on $(\mm,\overline{g})$. 
Parts (3), (6) and (7) are direct.
We focus on the remaining ones.

\medskip

\paragraph{\bf Part~\eqref{i.holder.singular}.} The Hölder regularity of $\wh r$ is 
direct. For the other, apply 
Theorem \ref{theorem-metric}(1) and Theorem \ref{t.main.tame}(1) to get that
$$
d({\wh\pi}_{\wh r}(z),{\wh\pi}_{\wh r}(w))
\leq {\rm const}\times \|X({\wh\pi}_{\wh r}(z))\|\dd({\wh\pi}_{\wh r}(z),{\wh\pi}_{\wh r}(w))\leq
{\rm const}\times \|X({\wh\pi}_{\wh r}(z))\|d_{\wh r}(z,w)^\kappa$$
for some $\kappa\in(0,1)$.
\medskip

\paragraph{\bf Part~\eqref{i.measures.singular}.}
By Remark \ref{rmk-exponents}, the set of $\chi$-hyperbolic measures for $(\mm,\g)$ coincides with the set of $\chi$-hyperbolic measures for $(M,g)$ with $\mu(\sing)=0$. 
Then apply Theorem \ref{t.main.tame}(2).
\medskip

{
\paragraph{\bf Part~\eqref{i.splitting.singular}.}
The flows $\Phi$ and $\overline\Psi$ are conjugate, hence Theorem \ref{t.main.tame}(4) implies that for $x\in \widehat \pi_{\widehat r}(\widehat \Sigma_{\widehat r})$:
\begin{align*}
\limsup_{t\to +\infty} \tfrac{1}{t}\log \vertiii{{\overline\Psi^t|_{\mathcal{N}^s_{x}}}}\leq -\lambda
\quad \text{ and }\quad\limsup_{t\to +\infty} \tfrac{1}{t}\log\vertiii{{\overline\Psi^{t}|_{\mathcal{N}^s_{\varphi^{-t}(x)}}}}\leq -\lambda\\
\quad \limsup_{t\to +\infty} \tfrac{1}{t}\log\vertiii{{\overline\Psi^{-t}|_{\mathcal{N}^u_{x}}}}\leq -\lambda
\quad \text{ and }\quad\limsup_{t\to +\infty} \tfrac{1}{t}\log \vertiii{{\overline\Psi^{-t}|_{\mathcal{N}^u_{\varphi^{t}(x)}}}}\leq -\lambda.
\end{align*}
Assuming furthermore that $x\in \widehat \pi_{\widehat r}(\widehat \Sigma_{\widehat r}^\#)$, by recurrence the above estimates imply the first part of part (4).
Finally, the proof that $z\mapsto N^{s/u}_{\widehat \pi_{\widehat r}(z)}$ are $X$-scaled H\"older continuous on $\widehat \Sigma_{\widehat r}$ is the same proof of
Part \eqref{i.holder.singular}.
}

\medskip

\paragraph{\bf Part~\eqref{i.manifold.singular}.}
By Theorem \ref{t.main.tame}(5) and Theorem \ref{theorem-metric}(1),
if $y\in V^{cs}(z)$ then there is $\tau\in \mathbb{R}$ such that
$$
d(\varphi^t(x),\varphi^{t+\tau}(y))\leq 
C \|X(\vf^t(x))\|\dd(\varphi^t(x),\varphi^{t+\tau}(y))
\leq C \|X(\vf^t(x))\|e^{-\alpha t}
$$
for all $t\geq 0$. The other inequality is proved similarly.

\medskip

\paragraph{\bf Part~\eqref{i.lift.singular}.}
We argue as for Part~\eqref{i.measures.singular}, applying Theorem \ref{t.main.tame}(8).
\end{proof}

\section{Finiteness on the number of ergodic measures of maximal entropy}\label{section-finiteness}

In this section, we prove Theorem \ref{Thm:finiteness-C-infty}.
Let $M$ be a $C^\infty$ closed Riemannian manifold,
let $X$ be a $C^{1+\beta}$ vector field ($\beta>0$) on $M$, and $\vf=\{\vf^t\}_{t\in\R}$ the flow generated by $X$.

\subsection{Homoclinic relation}\label{sec.homoclinic}

The notion of homoclinic relation for ergodic hyperbolic measures was defined in \cite{BCS-MME} for diffeomorphisms and in \cite{BCL23} for flows. Let us recall the later one.
Let $\mu$ be a $\vf$-invariant probability measure with $\mu(\sing)=0$.
If $\mu$ is hyperbolic, then by Pesin theory $\mu$-a.e. $x$ has stable and unstable manifolds $W^s(x)$ and $W^u(x)$, which are one-dimensional immersed submanifolds of $M$. We let 
$$W^{cs}(x)=\bigcup_{t\in \mathbb{R}} W^s(\varphi^t(x))\ \text{ and }\ W^{cu}(x)=\bigcup_{t\in \mathbb{R}} W^u(\varphi^t(x))$$
denote the weak stable/unstable manifolds at $x$, respectively.
Let $\mu_1,\mu_2$ be $\vf$-invariant hyperbolic probability measures with 
$\mu_1(\sing)=\mu_2(\sing)=0$.

\medskip
\noindent
{\sc Homoclinic relation of measures:} We say that $\mu_1$ is \textit{homoclinically related to} $\mu_2$ if there are measurable sets $\Lambda_1, \Lambda_2$ with $\mu_1(\Lambda_1)>0$ and $\mu_2(\Lambda_2)>0$ such that
$$
x_1\in\Lambda_1,x_2\in\Lambda_2\ \Longrightarrow\ W^{cs}(x_1) \pitchfork W^{cu}(x_2) \neq \emptyset\ \text{ and }\ W^{cu}(x_1) \pitchfork W^{cs}(x_2) \neq \emptyset.
$$ 

\medskip
Above, we use the definition that two submanifolds $V_1,V_2\subset M$ intersect
transversely, and write $V_1 \pitchfork V_2$, if there is $x\in V_1\cap V_2$ such that $T_xM=T_xV_1+T_xV_2$.

The following result shows that two distinct ergodic measures of maximal entropy cannot be homoclinically related.

\begin{theorem}\label{Thm:local-uniquess}
Let $X$ be a $C^{1+\beta}$ vector field ($\beta>0$) on a $C^\infty$ closed Riemannian manifold $M$, and assume $h_{\rm top}(\vf)>0$.
{If $\mu$ is an ergodic hyperbolic measure,
then there is at most one ergodic measure $\nu$ homoclinically related to $\mu$ such that }
$$
h_{\nu}(\varphi)=\sup\{h_{\eta}(\varphi): \eta~\text{is homoclinically related to}~\mu\}.
$$
\end{theorem}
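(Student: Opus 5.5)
We reduce the statement to Corollary~\ref{cor.local-uniq} with the potential $\psi\equiv 0$, which is admissible. The main issue is to check that any ergodic $\nu$ homoclinically related to $\mu$ and attaining the supremum satisfies the hypotheses of that corollary: it must be hyperbolic with $\nu(\sing)=0$, and the supremum in the statement must be comparable to the one in Corollary~\ref{cor.local-uniq}.

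\emph{First step.} Homoclinic relation is only defined between hyperbolic measures giving zero mass to $\sing$, since it requires Pesin weak stable and unstable manifolds at typical points. Hence every $\eta$ in the supremum is automatically hyperbolic with $\eta(\sing)=0$. Likewise $\mu$ itself must satisfy $\mu(\sing)=0$, as otherwise the class is empty and there is nothing to prove. We may then choose $\chi>0$ smaller than the minimal modulus of the transverse exponents of $\mu$. By ergodicity, $\mu$ is $\chi$-hyperbolic, and Theorem~\ref{thm.homoclinic} applies to the coding of the Main Theorem for this $\chi$.

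\emph{Second step (main obstacle).} The supremum in the statement ranges over all homoclinically related $\eta$, while Corollary~\ref{cor.local-uniq} and Theorem~\ref{thm.homoclinic} only see measures that are $\chi$-hyperbolic for a fixed $\chi$. Let $\nu_1,\nu_2$ be two ergodic measures achieving the supremum $h^*$. We replace $\chi$ by $\chi'=\min(\chi,\chi(\nu_1),\chi(\nu_2))/2$, where $\chi(\cdot)$ denotes the minimal modulus of the transverse exponents, so that $\mu,\nu_1,\nu_2$ are all $\chi'$-hyperbolic. We then apply Theorem~\ref{thm.homoclinic} with $\chi'$ to obtain an irreducible component $\Sigma'_r$. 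This component lifts $\nu_1$ and $\nu_2$ to ergodic measures $\overline\nu_1,\overline\nu_2$ of the same entropy; entropy preservation follows from the finite-to-one property, as in Theorem~\ref{t.main.singular}(2). By Theorem~\ref{thm.homoclinic}(2), every invariant probability measure on $\Sigma'_r$ projects to a measure homoclinically related to $\mu$. Since the coding is finite-to-one on the regular part, $h_{\rm top}$ of the irreducible TMF $\Sigma'_r$ (its Gurevich entropy) equals $h^*$. Thus $\overline\nu_1$ and $\overline\nu_2$ are measures of maximal entropy on an irreducible topological Markov flow, where the roof function $r$ is Hölder continuous.

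\emph{Third step.} Uniqueness of the measure of maximal entropy for an irreducible TMF with Hölder roof function reduces to uniqueness of the equilibrium state of $-h^*r$ on the base irreducible TMS, via the Abramov formula. This is given by Buzzi--Sarig \cite{Buzzi-Sarig}, as already used in Corollary~\ref{cor.local-uniq}. Hence $\overline\nu_1=\overline\nu_2$, and projecting gives $\nu_1=\nu_2$.

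The assumption $h_{\rm top}(\vf)>0$ only serves to ensure that measures of positive entropy exist, so that the supremum is meaningful and positive. The delicate point is the second step: the supremum is taken over all related measures, not only the $\chi$-hyperbolic ones. This is why we let $\chi$ depend on the pair $\nu_1,\nu_2$, and we rely on Theorem~\ref{thm.homoclinic}(2) to see that the Gurevich entropy of $\Sigma'_r$ does not exceed $h^*$.
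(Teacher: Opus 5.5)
Your proposal is correct and follows the paper's route. The paper's proof is just an application of Corollary~\ref{cor.local-uniq} with $\psi\equiv 0$, and your second and third steps unpack that corollary's proof: the choice of $\chi$ depending on $\nu_1,\nu_2$, the irreducible component from Theorem~\ref{thm.homoclinic}, and Buzzi--Sarig uniqueness via the Abramov reduction. The one difference is in checking the corollary's hypotheses: the paper gets $\eta(\sing)=0$ and hyperbolicity from the facts that $\sing$ carries no entropy and from the Ruelle inequality, whereas you read them off directly from the definition of the homoclinic relation, which is equally valid and does not rely on the ambient dimension.
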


\begin{proof}
The set $\sing$ carries no entropy.
Hence, if $\eta$ is ergodic with $h_{\eta}(\vf)>0$, then $\eta(\sing)=0$.
Moreover, the Ruelle inequality implies that $\eta$ is hyperbolic.
The result then follows from Corollary \ref{cor.local-uniq}, taking 
$\psi\equiv 0$.
\end{proof}

\subsection{Pesin blocks}

In this section we show that, under the assumptions of
Theorem \ref{Thm:finiteness-C-infty}, $\vf$ has finitely many pairwise non-homoclinically related ergodic measures of maximal entropy.
The key point, {which is a version for flows of the strong positive recurrence property introduced in \cite{BCS-SPR},} is that there exists a Pesin block to which every ergodic measure of maximal entropy gives positive measure to it.

The notion of Pesin block for $\vf$ is essentially defined through the time-one map $\varphi:=\varphi^1$.
For $x\notin {\rm Sing}$, let $E^c(x)$ be the subspace generated by $X(x)$.
Fix $\chi>0$. 

\medskip
\noindent
{\sc Pesin block ${\rm PES}_{\chi,\ell}^\varepsilon$:} For $0<\varepsilon\ll\chi$ and $\ell>0$, we define the {\em Pesin block} ${\rm PES}_{\chi,\ell}^\varepsilon$
to be the set of all regular points $x$ possessing a $D\varphi$-invariant splitting $T_{y}M=E^u(y)\oplus E^s(y) \oplus E^c(y)$ along the discrete orbit $\{\varphi^n(x):n\in\Z\}$ satisfying the following properties:
\begin{enumerate}
\item[(PB1)] $\|D\varphi^{-n}|_{E^u(\varphi^k(x))}\|\leq \ell e^{|k|\varepsilon}e^{-n\chi}$ and $\|D\varphi^{n}|_{E^s(\varphi^k(x))}\|\leq \ell e^{|k|\varepsilon}e^{-n\chi}$ for all $n\geq 0$ and $k\in \mathbb{Z}$;
\item[(PB2)] $\ell^{-1}e^{-(|k|+|n|)\varepsilon}\le\|D\varphi^{n}|_{E^c(\varphi^k(x))}\|\leq \ell e^{(|k|+|n|)\varepsilon}$ for all $n,k\in\mathbb Z$. 
\end{enumerate}

\medskip
By definition, ${\rm PES}_{\chi,\ell}^\varepsilon$ is compact.
The main results of this section are the following propositions.
{The first one is general, while the second requires the assumptions of 
Theorem \ref{Thm:finiteness-C-infty}.}

\begin{proposition}\label{Prop:HC-PES}
Let $X$ be a $C^{1+\beta}$ vector field ($\beta>0$) on a $C^\infty$ closed Riemannian manifold $M$.
If ${\rm PES}_{\chi,\ell}^{\varepsilon}$ is a Pesin block, then there are at most finitely many pairwise non-homoclinically related ergodic probability measures 
{$\mu$ with $\mu({\rm PES}_{\chi,\ell}^{\varepsilon})>0$.}
\end{proposition}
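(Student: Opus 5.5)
The plan is to follow the diffeomorphism argument of \cite{BCS-SPR,BCS-MME}: points of a compact Pesin block have uniform-size local stable and unstable manifolds that vary continuously, so nearby points of the block are homoclinically related. Since ${\rm PES}_{\chi,\ell}^\varepsilon$ is compact, a finite cover then bounds the number of classes.

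\textbf{Step 1: uniform local manifolds.} First we note that ${\rm PES}_{\chi,\ell}^\varepsilon\cap\sing=\emptyset$. Indeed, (PB2) with $n=k=0$ gives $\|X(x)\|$ bounded below on the block up to normalization: $E^c$ is spanned by $X$, and $\|D\varphi^n|_{E^c(x)}\|=\|X(\varphi^n x)\|/\|X(x)\|$. Applying (PB2) at $k=0$ for all $n$, together with compactness (closedness under limits), forces $\inf_{\rm PES}\|X\|>0$. So the block is a compact subset of $M\setminus\sing$ at positive distance from $\sing$.

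By Pesin theory for the time-one map $\varphi^1$, with the tempered bounds (PB1)--(PB2), every $x$ in the block has local stable and unstable manifolds $W^s_{loc}(x)$, $W^u_{loc}(x)$. These have a uniform size $\delta=\delta(\chi,\ell,\varepsilon)>0$ and depend continuously on $x$ in the $C^1$ topology. The tangent spaces $E^s(x)$ and $E^u(x)$ are continuous on the block, and the angles between $E^s$, $E^u$ and $E^c$ are bounded below by a constant depending on $\ell$. Thickening along the flow gives local weak manifolds $W^{cs}_{loc}(x)=\varphi^{[-\delta,\delta]}W^s_{loc}(x)$ and $W^{cu}_{loc}(x)$, which are $C^1$ discs of uniform size.

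\textbf{Step 2: transversality for nearby points.} There is $r>0$ such that if $x,y\in{\rm PES}$ and $d(x,y)<r$, then $W^{cs}_{loc}(x)\pitchfork W^{cu}_{loc}(y)\neq\emptyset$ and vice versa. The dimensions are complementary ($\dim E^s+\dim E^u+1=\dim M$), the discs have uniform size, and their tangent planes are uniformly transverse. The usual graph-transform/fixed-point argument in a chart of size comparable to $\delta$ then gives a transverse intersection point.

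\textbf{Step 3: finite count.} Cover the block by finitely many balls $B_1,\dots,B_m$ of radius $r/2$. Suppose $\mu_1,\dots,\mu_{m+1}$ are ergodic with $\mu_i({\rm PES})>0$. By pigeonhole, two of them, say $\mu_i$ and $\mu_j$, give positive mass to ${\rm PES}\cap B_k$ for the same $k$.

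Take $\Lambda_1={\rm PES}\cap B_k\cap{\rm Reg}(\mu_i)$ and $\Lambda_2={\rm PES}\cap B_k\cap{\rm Reg}(\mu_j)$, where ${\rm Reg}$ denotes the Pesin-regular points. Both sets have positive measure. For these points the global $W^{cs}$ and $W^{cu}$ from the definition contain the local ones, so by Step 2 the defining condition of homoclinic relation holds. One subtlety: hyperbolicity and $\mu(\sing)=0$ are needed for the definition to make sense. Both follow from $\mu({\rm PES})>0$ and ergodicity, since (PB1) gives exponents outside $[-\chi+\varepsilon,\chi-\varepsilon]$ on the block. Hence there are at most $m$ pairwise non-related measures.

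\textbf{Main obstacle.} The main obstacle is Step 1: producing Pesin local manifolds of uniform size and continuity for a flow's time-one map, whose central direction is only tempered by (PB2). I would first handle this by working with the linear Poincar\'e flow on $\mathcal N$, where (PB1) yields uniform hyperbolicity up to $\ell e^{|k|\varepsilon}$. I would then apply the standard Pesin stable manifold theorem for $C^{1+\beta}$ maps with tempered constants at the points $\varphi^k(x)$, as in Barreira--Pesin. If the central direction causes trouble, the fallback is to use the coding of Theorem~\ref{t.main.singular}(5) locally away from $\sing$.
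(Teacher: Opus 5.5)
Your proposal follows the paper's proof. It uses uniform-size local manifolds on the block (Theorem~\ref{Thm:stable-theorem}, via the time-one map) and a local product structure for nearby block points (Lemma~\ref{Lem:local-Pro}), and then a finite cover of the compact block by $r/2$-balls plus pigeonhole.

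The step that does not work as written is your derivation in Step~1 that $\inf_{{\rm PES}_{\chi,\ell}^{\varepsilon}}\|X\|>0$.
\begin{itemize}
\item Condition (PB2) only constrains ratios $\|X(\varphi^{n+k}(x))\|/\|X(\varphi^{k}(x))\|$ along an orbit. It never bounds the size of $\|X(x)\|$ itself, and the case $n=k=0$ is vacuous.
\item Appealing to closedness is circular. The block consists of nonsingular points by definition, so it is closed in $M$ exactly when it does not accumulate on $\sing$.
\end{itemize}
The paper does not derive this either. It simply takes ${\rm PES}_{\chi,\ell}^{\varepsilon}$ to be compact (``By definition, ${\rm PES}_{\chi,\ell}^\varepsilon$ is compact''), and you should state it as the same input rather than claim to prove it.

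You use this input twice. First, for the finite cover. Second, for the uniform size of $W^{cs/cu}_\delta=\varphi^{[-\delta,\delta]}W^{s/u}_\delta$, whose extent along the flow is of order $\delta\|X\|$. That uniform size is what makes a uniform $r$ possible in Step~2.

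A smaller point concerns Step~2. Transversality of $W^{cs}_\delta(x)$ and $W^{cu}_\delta(y)$ requires $\dim E^u(x)=\dim E^u(y)$, because both manifolds contain the flow direction. Having $\dim E^s+\dim E^u+1=\dim M$ at each point separately is not enough. The equality does hold after shrinking $r$: for $\varepsilon\ll\chi$ the splitting on the block is unique and limits of splittings are splittings at the limit point, so the index is locally constant on the compact block.
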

\begin{proposition}\label{Prop:SPR-FLow}
	Assume that $X$ is a $C^{\infty}$ vector field on the compact $3$-dimensional manifold $M$ and that $\{\varphi^t\}_{t\in \R}$ is the flow generated by $X$, with $h_{\rm top}(\varphi)>0$. 
	Then, there exists $\chi>0$ such that for every $0<\varepsilon \ll \chi$ and every $\tau>0$, there exist $\ell>0$ and $\delta\in(0,h_{\rm top}(\varphi))$ such that $\mu({\rm PES}_{\chi,\ell}^{\varepsilon})>1-\tau$ for every ergodic measure $\mu$ satisfying $h_{\mu}(\varphi)>h_{\rm top}(\varphi)-\delta$.
\end{proposition}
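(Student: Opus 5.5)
The plan is to argue by contradiction and compactness, following the strategy of \cite{BCS-SPR} for the time-one map $\varphi=\varphi^1$, with singularities handled by working with the rescaled linear Poincar\'e flow $\overline\Psi$. Since $\overline\Psi$ is conjugate to the induced flow $\Phi$ of Section~\ref{section-theorem-coding}, Lemma~\ref{lemma-estimate-X}(2) makes it a bounded cocycle on $\mm$.

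First I fix $\chi$. For an ergodic $\mu$ with $h_\mu(\varphi)>0$ we have $\mu(\sing)=0$, since $\sing$ carries no entropy. The Ruelle inequality applied to $\varphi$ and $\varphi^{-1}$, in dimension three with a zero exponent along $X$, gives transverse exponents $\lambda^+(\mu)\ge h_\mu(\varphi)$ and $\lambda^-(\mu)\le -h_\mu(\varphi)$. Taking $\chi=h_{\rm top}(\varphi)/3$ and $\delta<h_{\rm top}(\varphi)/3$, every ergodic $\mu$ with $h_\mu>h_{\rm top}-\delta$ is $2\chi$-hyperbolic with $\mu(\sing)=0$.

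Now suppose the conclusion fails for some $\varepsilon,\tau$. Then there are ergodic $\mu_n$ with $h_{\mu_n}\to h_{\rm top}$ and $\mu_n({\rm PES}^\varepsilon_{\chi,\ell_n})\le 1-\tau$ for a sequence $\ell_n\to\infty$; since Pesin blocks increase with $\ell$, this holds for every fixed $\ell$ once $n$ is large. I pass to a weak$^*$ limit $\mu$. Because $X$ is $C^\infty$, the Yomdin--Newhouse theory gives upper semicontinuity of entropy (applied to $\varphi^1$), so $\mu$ is a measure of maximal entropy. Hence almost every ergodic component of $\mu$ is an MME, gives no mass to $\sing$, and is $2\chi$-hyperbolic.

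The heart of the argument is to show that the $\mu_n$ are ``uniformly Pesin-regular''. Following \cite{BCS-SPR} (and Burguet's reparametrization lemma), I aim for three estimates.

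\begin{enumerate}[(a)]
\item \emph{No mass escapes to $\sing$:} $\lim_{U\downarrow\sing}\limsup_n\mu_n(U)=0$. I would derive this from an entropy estimate showing that orbit segments near $\sing$ carry little entropy. In the rescaled metric $\g$, a neighbourhood of $\sing$ is a region where the flow has bounded speed and bounded $DX$. Meanwhile $\g$-balls are tiny $g$-balls, so an orbit segment spending time $T$ near $\sing$ can only be in $e^{o(T)}$ distinct $(T,\epsilon)$-patterns as $U$ shrinks. Combined with $h_{\mu_n}\to h_{\rm top}$, this forces the mass near $\sing$ to vanish.
\item \emph{Continuity of exponents:} $\lambda^\pm(\mu_n)\to\int\lambda^\pm\,d\mu$. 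This is the $C^\infty$ Yomdin-type bound on the defect of upper semicontinuity of entropy, applied to the cocycle $\overline\Psi$; by (a), $\overline\Psi$ is continuous on the part of space where the mass lives.
\item \emph{A Pliss-lemma argument.} Estimate (b) together with the \cite{BCS-SPR} reparametrization estimate gives a set of $\mu_n$-measure at least $1-\tau/2$, uniformly in $n$, of ``hyperbolic times'' for both $\overline\Psi^{\pm}$. At these times the stable and unstable norms are controlled with constants uniform in $n$.
\end{enumerate}

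From (c) I then produce the tempered estimates (PB1), converting from $\overline\Psi$ back to $D\varphi$ at the cost of factors $\|X\|^{\pm1}$. The estimate (PB2) along $E^c=\mathbb R X$ amounts to $\|X(\varphi^n x)\|/\|X(\varphi^k x)\|$ being tempered. By (a), this holds on a set of large measure once we use the ergodic theorem for $\log\|X\|$ truncated near $\sing$, with constants uniform in $n$. This yields $\ell$ with $\mu_n({\rm PES}^\varepsilon_{\chi,\ell})>1-\tau$ for all large $n$, a contradiction.

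I expect the main obstacle to be step (a) together with its use in (b): the $C^\infty$ Yomdin machinery must run uniformly near singularities. My first attempt would be to carry out the reparametrization lemma in the tame rescaled geometry of Theorem~\ref{theorem-metric}. There, Proposition~\ref{prop-derivative-new-metrics} shows that all higher derivatives of $\g$ are $O(\ve^k)$, and Lemma~\ref{lemma-estimate-X} shows that the $C^{1+\beta}$ data of $X$ are uniform. I would check that the $C^r$ norms of $X$ in $\g$-charts also stay bounded, so that Yomdin's bounds hold uniformly on $\mm$ and the singular set plays no special role.
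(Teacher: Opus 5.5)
\textbf{There is a gap: the uniform estimates along $E^s$ and $E^u$ are not established.} Your overall frame matches the paper's. You negate, extract a weak$^*$ limit $\mu$, use Newhouse's upper semicontinuity to see that $\mu$ is a measure of maximal entropy with $\mu(\sing)=0$, and finish with a Pliss-lemma argument. The trouble is in what you call the heart of the argument.

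Your justification of (b) does not work. The Yomdin--Newhouse bound on the defect of upper semicontinuity of entropy only gives $\limsup_n h_{\mu_n}(\vf)\le h_\mu(\vf)$. You have already used this, and it says nothing about convergence of $\lambda^\pm(\mu_n)$. Continuity of exponents along sequences with $h_{\mu_n}\to h_{\rm top}$ is a much finer entropy--exponent statement, of the type proved in \cite{BCS-CONTINUITY} for surface diffeomorphisms. Step (c), producing hyperbolic times with constants uniform in $n$, is essentially the strong positive recurrence statement itself. Pointing to ``the reparametrization estimate of \cite{BCS-SPR}'' does not supply it.

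Passing to the tame metric does not remove this difficulty and adds one of its own. Going from $\overline\Psi$ back to $D\vf$ costs not only $\|X(\vf^n x)\|/\|X(x)\|$ but also the factor $\sin\angle(E^s(x),X(x))/\sin\angle(E^s(\vf^nx),X(\vf^nx))$. This angle ratio can degenerate near singularities and is not controlled by powers of $\|X\|$. The paper does not reprove any of this: it imports the estimate \eqref{eq:su-direction} from \cite[Theorem C]{BLY-ESPR}, directly for $D\vf$ on $E^u$ and $E^s$ and uniformly in $i$.

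\textbf{You have also placed the difficulties in the wrong spots.} Step (a) needs no entropy estimate. Since $\mu(\sing)=0$, the portmanteau theorem gives $\limsup_n \mu_n(\overline U)\le \mu(\overline U)\to 0$ for closed neighbourhoods $\overline U$ shrinking to $\sing$.

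Conversely, (PB2) is the part the paper proves by hand, and it does not follow from the pointwise ergodic theorem for a truncation of $\log\|X\|$, which gives no constants uniform in $n$. The paper proceeds as follows.
\begin{enumerate}
\item It chooses $L$ so that the open set $U_L=\{e^{-\varepsilon L/2}<\|D\vf^L|_{E^c}\|<e^{\varepsilon L/2}\}$ has $\mu(U_L)>1-\rho/2$, hence $\nu_i(U_L)>1-\rho$ for large $i$.
\item It applies Lemma~\ref{Lem:Pliss-Like} to $a_n(x)=\frac1L\log\|D\vf^L|_{E^c(\vf^{n-1}x)}\|$.
\item It uses $\dim E^c=1$ to compare $\prod_{j<m}\|D\vf^L|_{E^c(\vf^jx)}\|$ with $\|D\vf^m|_{E^c(x)}\|^L$.
\item It invokes \cite[Lemma 2.20]{BCS-SPR} to turn these one-sided bounds, together with \eqref{eq:su-direction}, into the two-sided tempered estimates defining ${\rm PES}^\varepsilon_{\chi,\ell}$.
\end{enumerate}
Your (a) would also feed the first step. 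Off $\sing$, the complement of $U_L$ lies in $\{\|X\|<c\}\cup\vf^{-L}\{\|X\|<c\}$ as soon as $\varepsilon L/2>\log(\sup\|X\|/c)$. Hence $\mu_n(M\setminus U_L)\le 2\mu_n(\{\|X\|<c\})$.
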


To prove Proposition \ref{Prop:HC-PES}, we recall some facts on the theory of local unstable manifolds on Pesin blocks. These can be derived from 
\cite[Section 7]{BP-SE}.

\begin{theorem}[Pesin]\label{Thm:stable-theorem}
Given $\chi>0$, for $0<\varepsilon \ll\chi$ and $\ell\in\mathbb N$, there is $\delta=\delta(\chi,\varepsilon,\ell)>0$ such that every $x\in {\rm PES}_{\chi,\ell}^{\varepsilon}$ has a local unstable manifold $W^u_\delta(x)$ of the form 
$$W^u_{\delta}(x)=\exp{x} \big\{(v,\psi^u_x(v)): v\in E^u(x), \|v\| \leq \delta \big\}$$
such that:
\begin{enumerate}[{\rm (1)}]
\item[{\rm (1)}] $\psi^u_x: \{v\in E^{u}(x):\|v\|\leq  \delta\} \to E^{c}(x) \oplus E^s(x)$ is a $C^{1+\beta}$ map with $\psi^u_x(0)=0$, $(D\psi^u_x)_0=0$ and ${\rm Lip}(\psi^u_x)\leq \tfrac{1}{10}$.
\item[{\rm (2)}]
For all $y,z\in W^u_{\delta}(x)$ it holds
$$d(\varphi^{-t}(y),\varphi^{-t}(z))\le 2\ell e^{-t(\chi-2\varepsilon)}d(y,z),\ \ \forall t\ge 0.$$
\end{enumerate}
An analogous statement holds for local stable manifolds $W^s_\delta(x)$. Additionally, the maps
$x\in {\rm PES}_{\chi,\ell}^{\varepsilon}\mapsto W^{u/s}_{\delta}(x)$ depend continuously in the $C^1$ topology.
\end{theorem}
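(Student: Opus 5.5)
The plan is to reduce the statement to the classical Pesin unstable manifold theorem for the time-one map $\varphi=\varphi^1$, which is a $C^{1+\beta}$ diffeomorphism of $M$ because $X$ is $C^{1+\beta}$. I treat the splitting $E^u\oplus(E^c\oplus E^s)$ as a \emph{dominated} nonuniform splitting along the orbit of $x$. Along $E^u$ backward iterates contract at rate at least $\chi$. Along $E^c\oplus E^s$ backward iterates expand at rate at most $\varepsilon$: by (PB2) the center expands or contracts only sub-exponentially, and the stable bundle only contracts under forward iteration. So the spectral gap is $\chi-\varepsilon$, which is positive since $\varepsilon\ll\chi$. This is exactly the situation of \cite[Section 7]{BP-SE}, so the construction of the strong unstable manifold can be carried out there.

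The steps are as follows.

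\emph{Step 1 (Lyapunov norms).} For $y=\varphi^k(x)$, define adapted norms on $E^u(y)$ and $E^c(y)\oplus E^s(y)$ by weighted sums or integrals of $\|D\varphi^{-n}v\|$, with weights $e^{n(\chi-\varepsilon)}$ and $e^{-2n\varepsilon}$ respectively. By (PB1)--(PB2), the ratio between the Lyapunov norm and the Riemannian norm at $\varphi^k(x)$ is bounded by $C(\ell)e^{|k|\varepsilon}$. In these norms, $D\varphi^{-1}$ contracts $E^u$ by $e^{-(\chi-\varepsilon)}$ and expands $E^c\oplus E^s$ by at most $e^{2\varepsilon}$. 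The angles between the bundles are also controlled by $\ell e^{|k|\varepsilon}$.

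\emph{Step 2 (Lyapunov charts).} Compose $\exp{\varphi^k(x)}$ with the Lyapunov change of coordinates to obtain charts on balls of radius $r_k=r_0(\ell)e^{-3|k|\varepsilon}$. Use the uniform $C^{1+\beta}$ bound of $\varphi^{\pm1}$ on the compact manifold $M$. On these balls the chart representation of $\varphi^{-1}$ is the linear dominated map plus a perturbation whose $C^1$ norm is at most a small $\eta$; this is the standard estimate where the tempered growth $e^{|k|\varepsilon}$ is absorbed by shrinking the radius.

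\emph{Step 3 (graph transform).} Apply the Hadamard--Perron graph transform along the backward orbit $\varphi^{-k}(x)$, $k\ge 0$. Graphs over $E^u$ with Lipschitz constant at most $1/10$ are mapped to graphs of the same type, by cone invariance from the domination $e^{-(\chi-\varepsilon)}$ versus $e^{2\varepsilon}$. The transform is a contraction in the $C^0$ distance. Its fixed point gives $\psi^u_x$ defined on a ball of radius $\delta=\delta(\chi,\varepsilon,\ell)$, namely $r_0$ read in the Riemannian norm, with $\psi^u_x(0)=0$, $(D\psi^u_x)_0=0$ and Lipschitz constant at most $1/10$. This yields (1), except for regularity.

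\emph{Step 4 (contraction estimate).} Points on the graph contract under $\varphi^{-n}$ by $e^{-n(\chi-2\varepsilon)}$ in Lyapunov norms. Converting back to the Riemannian metric at $x$ and at $\varphi^{-n}(x)$ costs a factor of order $\ell$, which gives the integer-time version of (2). For real $t$, write $t=n+s$ with $s\in[0,1)$ and absorb $\varphi^{-s}$ into the constant using the uniform Lipschitz bound of $\varphi^{-s}$, $s\in[0,1]$. Possibly $2\ell$ must be read as a constant depending on $\ell$, or $\delta$ must be shrunk.

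\emph{Step 5 (stable manifolds and continuity).} The stable statement follows by applying the same argument to $\varphi^{-1}$. Continuity of $x\mapsto W^{u/s}_\delta(x)$ in the $C^1$ topology on ${\rm PES}^\varepsilon_{\chi,\ell}$ comes from three facts: the splittings vary continuously on the compact block, since limits of points of the block satisfy the same uniform bounds and the splitting is unique; all constants are uniform on the block; and the fixed point of a uniformly contracting graph transform depends continuously on the data.

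The main obstacle is the $C^{1+\beta}$ regularity of $\psi^u_x$. The $C^1$ statement follows from the fiber contraction theorem applied to the derivative cocycle on invariant cone fields. Hölder regularity of $D\psi^u_x$ requires a bunching-type inequality between the domination rate $\chi-\varepsilon$ and $\beta$ times the rates involved. Because the non-dominated rates are only of size $\varepsilon\ll\chi$, I expect this to hold for some smaller exponent $\beta'\le\beta$, which is all that is used later. I would first try the $C^{1+\beta'}$ version of the fiber contraction argument as carried out in \cite{BP-SE}.
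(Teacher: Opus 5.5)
Your route is the paper's route. The paper gives no separate argument: it applies \cite[Theorem 7.1]{BP-SE} to the time-one map $\varphi^1$. There, (PB1) supplies tempered hyperbolicity of $E^u,E^s$, and (PB2), read as slow variation of $\|X\|$ (hence of the distance to $\mathrm{Sing}$) along the orbit, supplies tempered control of $E^c=\mathbb{R}X$. Your Steps 1--5 outline the proof of that theorem.

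As written, however, your setup has the key inequality on $E^c\oplus E^s$ reversed. Backward iterates do \emph{not} expand $E^c\oplus E^s$ at rate at most $\varepsilon$. By (PB1) at $\varphi^{k-n}(x)$, every $v\in E^s(\varphi^k(x))$ satisfies $\|D\varphi^{-n}v\|\ge \ell^{-1}e^{-|k-n|\varepsilon}e^{n\chi}\|v\|$. So your adapted norm $\sum_n e^{-4n\varepsilon}\|D\varphi^{-n}v\|^2$ is infinite on $E^s$, and the claim that $D\varphi^{-1}$ expands $E^c\oplus E^s$ by at most $e^{2\varepsilon}$ is false.

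What the unstable manifold actually needs is the forward bound $\|D\varphi^{n}|_{E^c\oplus E^s(\varphi^k(x))}\|\le \ell' e^{(|k|+n)\varepsilon'}$ with $\varepsilon'=O(\varepsilon)$. This follows from (PB1) on $E^s$, (PB2) on $E^c$, and tempered lower bounds on the angles between the bundles. Those angle bounds are not part of the definition, but they are derived from (PB1)--(PB2) in the usual way.

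The corrected construction runs as follows:
\begin{enumerate}
\item Build the norm on $E^c\oplus E^s$ from forward iterates, $\sum_{n\ge0}e^{-4n\varepsilon'}\|D\varphi^nv\|^2$, which gives $\|D\varphi|_{E^c\oplus E^s}\|'\le e^{2\varepsilon'}$.
\item The cone invariance of Step 3 is for the \emph{forward} graph transform, pushing graphs over $E^u$ from $\varphi^{-k-1}(x)$ to $\varphi^{-k}(x)$. Its domination ratio is $\|D\varphi|_{E^c\oplus E^s}\|'\,\|D\varphi^{-1}|_{E^u}\|'\le e^{2\varepsilon'-(\chi-\varepsilon)}<1$.
\item Take chart radii $r_0e^{-c|k|\varepsilon/\beta}$ rather than $r_0e^{-3|k|\varepsilon}$. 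The $C^1$ size of the nonlinear part on a ball of radius $r$ is of order $e^{c|k|\varepsilon}r^{\beta}$, so your choice fails for small $\beta$.
\end{enumerate}
With these corrections your outline goes through.

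Your worry about regularity is unfounded, and settling for $C^{1+\beta'}$ would prove less than the statement claims. $W^u_\delta(x)$ is a single strong unstable leaf, and its smoothness needs no bunching condition. Let $H_k$ be the $\beta$-H\"older seminorm of the derivative of the graph at $\varphi^{-k}(x)$. The forward graph transform gives $H_k\le\theta H_{k+1}+Ce^{ck\varepsilon}$, where $\theta\le\|D\varphi|_{E^c\oplus E^s}\|'\,(\|D\varphi^{-1}|_{E^u}\|')^{1+\beta}\le e^{2\varepsilon'-(1+\beta)(\chi-\varepsilon)}$. The term $Ce^{ck\varepsilon}$ is the tempered H\"older constant of the chart maps, so the recursion converges once $\varepsilon\ll\chi$. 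Hence $\psi^u_x$ is $C^{1+\beta}$ with the original $\beta$, as in \cite{BP-SE}.
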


A more general unstable manifold theorem for flows is stated in \cite[Section 7.3.5]{BP-SE}, and the proof of Theorem \ref{Thm:stable-theorem} follows by applying \cite[Theorem 7.1]{BP-SE} to the time-one map $\vf^1$.
Note that condition (PB2) in the definition of Pesin block implies the $\varepsilon$-slowly varying condition for the distance from the orbit to $\sing$.
Hence, local unstable manifolds for points of 
${\rm PES}_{\chi,\ell}^{\varepsilon}$ have uniformly positive size;  see \cite[Section 7.3.1]{BP-SE} for details.

For $x\in {\rm PES}_{\chi,\ell}^{\varepsilon}$, we define the global unstable and unstable manifolds of $x$ by
$$W^u(x):=\bigcup_{t> 0} \varphi^{t}(W^u_{\delta}(\varphi^{-t}(x)))\ \text{ and }\ W^s(x):=\bigcup_{t> 0} \varphi^{-t} (W^s_{\delta}(\varphi^{t}(x))).$$
Also, let $W^{cu/cs}_{\delta}(x):=\varphi^{[-\delta,\delta]}(W^{u/s}_{\delta}(x))$. 
By the continuity of $W^{u/s}_{\delta}(x)$ and the control of the flow direction on the Pesin block, we obtain the following local product structure.
\begin{lemma}\label{Lem:local-Pro}
Given $\chi>0$, let $0<\varepsilon \ll \chi$, $\ell \in \N$, and let $\delta=\delta(\chi,\varepsilon,\ell)>0$ be as in Theorem \ref{Thm:stable-theorem}. There exists $r=r(\chi,\varepsilon,\ell)>0$ such that
$$
\left(\begin{array}{c}
x,y\in {\rm PES}_{\chi,\ell}^{\varepsilon}\\
d(x,y)<r 
\end{array}
\right)
\ 
\Longrightarrow\ W^{cu}_{\delta}(x) \pitchfork W^{cs}_{\delta}(y)\neq \emptyset.
$$
\end{lemma}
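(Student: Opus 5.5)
The plan is a compactness and continuity argument on the Pesin block, applied to the center-unstable and center-stable discs of Theorem~\ref{Thm:stable-theorem}. The ambient dimension is $3$ in the application, but the argument only uses that $\dim E^u+\dim E^s+1=\dim M$. For $x\in{\rm PES}_{\chi,\ell}^\varepsilon$, the tangent spaces satisfy
\[
T_xW^{cu}_\delta(x)=E^u(x)\oplus E^c(x),\qquad T_yW^{cs}_\delta(y)=E^s(y)\oplus E^c(y).
\]
At $y=x$ these two subspaces span $T_xM$ with a uniform angle. I will show that, for $y$ close to $x$, the two $C^1$ discs are $C^1$-close to transverse complementary discs through nearby points, and hence intersect transversely.

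\textbf{Step 1: uniform angles.} I first check that on ${\rm PES}_{\chi,\ell}^\varepsilon$ the angles between $E^u$, $E^s$ and $E^c$ are bounded below by a constant depending only on $(\chi,\varepsilon,\ell)$. I will use (PB1)--(PB2) with $k=0$ and the standard argument: if $v\in E^u$ and $w\in E^s\oplus E^c$ were almost parallel, then iterating backward would contract $v$ at rate $e^{-n\chi}$. By (PB2), $w$ grows at most like $\ell^2 e^{n\varepsilon}$, so the difference $v-w$, which is small, would have to become large. This is impossible because $\|D\varphi^{-n}\|$ grows at most exponentially at a fixed rate. An alternative route is compactness of the block together with continuity of the splitting on it. Next I use that ${\rm PES}_{\chi,\ell}^\varepsilon$ is compact (stated in the paper) and that $x\mapsto W^{u/s}_\delta(x)$ is continuous in the $C^1$ topology. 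Under these two facts the splittings $E^u,E^s,E^c$ are uniformly continuous on the block. Also, $X$ is bounded away from zero on the block by (PB2), since $\|D\varphi^n|_{E^c}\|$ controls the ratio $\|X(\varphi^n x)\|/\|X(x)\|$ and the block is compact and disjoint from $\sing$. Consequently the flow discs $W^{cu}_\delta(x)=\varphi^{[-\delta,\delta]}W^u_\delta(x)$ and $W^{cs}_\delta(y)$ are embedded $C^1$ discs of uniform size, varying continuously with the base point in the $C^1$ topology.

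\textbf{Step 2: local chart.} In the chart $\exp{x}$, I write $W^{cu}_\delta(x)$ as a graph over $E^u(x)\oplus E^c(x)$ with small Lipschitz constant. This follows from ${\rm Lip}(\psi^u_x)\le 1/10$ together with the regularity of the flow near a nonsingular point. Likewise, for $y$ near $x$, I write $W^{cs}_\delta(y)$ in the same chart as a graph over $E^s(x)$, valued in $E^u(x)\oplus E^c(x)$. Here I use continuity in $y$ and the uniform angles from Step 1 to keep the Lipschitz constant below, say, $1/5$ once $d(x,y)<r$. The intersection then reduces to a fixed point problem for a contraction, namely the composition of the two graph maps. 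This produces a unique intersection point, and transversality follows because the tangent spaces are graphs of small-norm maps over complementary subspaces. Finally, compactness of the block makes $r$ uniform.

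\textbf{Main obstacle.} The hardest step is showing that the Lipschitz bound and the size of the center-unstable disc $\varphi^{[-\delta,\delta]}W^u_\delta(x)$, viewed as a graph in a single chart, are uniform over the block. The difficulty is that the flow direction varies and $\|X\|$ could be small. I plan to handle this with the lower bound on $\|X\|$ over the compact block, shrinking $\delta$ if needed so that the flow box is nearly a product.
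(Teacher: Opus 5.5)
Your overall strategy (compactness of ${\rm PES}_{\chi,\ell}^\varepsilon$, $C^1$-continuity of $x\mapsto W^{u/s}_\delta(x)$, a lower bound on $\|X\|$ over the block, uniform angles, then a graph/contraction argument) is the same as the paper's. The paper justifies the lemma in one sentence, invoking ``the continuity of $W^{u/s}_\delta(x)$ and the control of the flow direction on the Pesin block''. However, your Step 2, where the intersection is actually produced, is set up wrongly.

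The disc $W^{cs}_\delta(y)=\varphi^{[-\delta,\delta]}W^s_\delta(y)$ has dimension $\dim E^s+1$. So it cannot be a graph over $E^s(x)$ with values in $E^u(x)\oplus E^c(x)$. Moreover, both $W^{cu}_\delta(x)$ and $W^{cs}_\delta(y)$ contain the flow direction, so they are not complementary and their tangent spaces are not graphs over complementary subspaces. Their intersection is therefore never a single point. If $z$ lies in it, so does $\varphi^t(z)$ for small $|t|$, and a transverse intersection has dimension $(\dim E^u+1)+(\dim E^s+1)-\dim M=1$. So the contraction whose fixed point is ``the unique intersection point'' does not exist as you formulate it.

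The repair is to factor out the flow direction. Take a small cross-section $\Sigma=\exp{x}\{v\in E^u(x)\oplus E^s(x):\|v\|<\delta'\}$. It is uniformly transverse to $X$ near $x$, thanks to the uniform angles and the lower bound on $\|X\|$ on the block. The two discs are flow-saturated for times in $[-\delta,\delta]$. For $d(x,y)<r$, the relevant points of $W^u_\delta(x)$ and $W^s_\delta(y)$ reach $\Sigma$ in flow time $O(r)$, which is $<\delta$ once $r$ is small. Hence the traces $W^{cu}_\delta(x)\cap\Sigma$ and $W^{cs}_\delta(y)\cap\Sigma$ are $C^1$ discs of dimensions $\dim E^u$ and $\dim E^s$. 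They are graphs over $E^u(x)$ and $E^s(x)$ with small Lipschitz constants. Your contraction argument now gives a point $z$ in both traces. Transversality in $M$ follows from $T_zW^{cu}_\delta(x)+T_zW^{cs}_\delta(y)\supset T_z(W^{cu}_\delta(x)\cap\Sigma)+T_z(W^{cs}_\delta(y)\cap\Sigma)+\R X(z)=T_zM$. Equivalently, write the discs as graphs over $E^u(x)\oplus E^c(x)$ and $E^s(x)\oplus E^c(x)$, and solve the fixed-point problem in the $E^u\times E^s$ coordinates with the center coordinate as a parameter.

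A smaller point concerns Step 1. Your direct argument for the angle between $E^u$ and $E^s\oplus E^c$ does not work as written. Under backward iteration the $E^s$-component of $w$ expands, and what you need is a \emph{lower} bound on $\|D\varphi^{-n}w\|$. That requires first bounding the angle between $E^s$ and $E^c$. Your fallback is enough, though: compactness plus continuity of the splitting, which comes from $C^1$-continuity of $W^{u/s}_\delta$ and continuity of $X$.
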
 

The condition is symmetric, hence we also have $W^{cs}_{\delta}(x) \pitchfork W^{cu}_{\delta}(y)\neq \emptyset$.

\begin{proof}[Proof of Proposition \ref{Prop:HC-PES}]
Let $r$ be as in Lemma \ref{Lem:local-Pro}.
Recalling that ${\rm PES}_{\chi,\ell}^{\varepsilon}$ is compact, we
choose finitely many points $\{z_i\}_{i=1}^{N}$ such that ${\rm PES}_{\chi,\ell}^{\varepsilon}\subset \bigcup_{i=1}^{n} B(z_i,r/2)$.
If two ergodic hyperbolic measures $\mu$ and $\nu$ are not homoclinically related,
then by Lemma \ref{Lem:local-Pro} there is no $z_i$ such that $\mu(B(z_i,r/2)\cap {\rm PES}_{\chi,\ell}^{\varepsilon})>0$ and $\nu(B(z_i,r/2)\cap {\rm PES}_{\chi,\ell}^{\varepsilon})>0$.
Therefore there are at most $N$ pairwise non-homoclinically related ergodic measures $\mu$ with $\mu({\rm PES}_{\chi,\ell}^{\varepsilon})>0$. 
\end{proof}

In the proof of Proposition \ref{Prop:SPR-FLow}, we will use the following version of Pliss' lemma, whose proof can be found in \cite[Appendix B]{CMY-22}.
\begin{lemma}\label{Lem:Pliss-Like}
	For every $0<\gamma_1<\gamma_2<C$ and every $\tau>0$, there is $\rho:=\rho(\gamma_1,\gamma_2,C,\tau)>0$ with the following property: if $\{a_n\}_{n\geq 0} \subset \R$ satisfies $|a_n|<C$ and 
	$$\liminf_{n\to \infty} \frac{1}{n} \# \{0\leq k<n:a_k<\gamma_1\}>1-\rho,$$
	then
    $$\limsup\limits_{n\to \infty} \frac{1}{n} \# \left\{0\leq k<n:\sum_{j=0}^{m-1} a_{j+k}\leq m \gamma_2,~\forall m>0\right\}>1-\tau.
    $$
\end{lemma}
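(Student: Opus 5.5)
The plan is the classical Pliss argument, run on the averaged sequence rather than on $(a_n)$ directly. Fix $0<\gamma_1<\gamma_2<C$ and $\tau>0$. Call $k$ a \emph{good time} if $\sum_{j=0}^{m-1}a_{j+k}\le m\gamma_2$ for all $m>0$. Equivalently, with $b_j:=a_j-\gamma_2$ and $S_k:=\sum_{j<k}b_j$, the index $k$ is good iff $S_{k+m}\le S_k$ for all $m>0$, i.e. $S_k=\sup_{p\ge k}S_p$. The proof then has three steps.

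\textbf{Step 1: the sum $S_n$ drifts down.} Suppose the frequency of $\{k:a_k<\gamma_1\}$ is eventually $>1-\rho$. Since $|a_k|<C$, for large $n$ we get
\[
S_n\le n\bigl[(1-\rho)(\gamma_1-\gamma_2)+\rho\, 2C\bigr].
\]
Choosing $\rho$ small, this is $\le -n\eta$ with $\eta:=(\gamma_2-\gamma_1)/2$. In particular $S_n\to-\infty$, so the supremum defining "$k$ good" is attained, and the set $G$ of good times is infinite.

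\textbf{Step 2: count bad times via a rising-sun decomposition.} Fix a large $n$ and consider the bad times $k<n$. Any such $k$ has some $p>k$ with $S_p>S_k$. The plan is to decompose $[0,n)$ into maximal intervals of bad times, using the rising sun lemma. On each such interval $[u,v)$, the point $v$ is good (or lies beyond $n$), and every $k\in[u,v)$ satisfies $S_k<S_v$ roughly.

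Above each bad interval, the increments relative to $\gamma_2$ are positive on average. One cannot bound the length of a bad block crudely via $S_v-S_u\le 2Cn$ alone. Instead, I would use the following: on a bad block, $S_v\ge S_u$, so $\sum_{j\in[u,v)}(a_j-\gamma_2)\ge 0$. Within the block, the steps with $a_j<\gamma_1$ contribute at most $-(\gamma_2-\gamma_1)$ each, and the other steps contribute at most $2C$ each. Hence each bad block contains at least a fraction $c:=(\gamma_2-\gamma_1)/(2C+\gamma_2-\gamma_1)$ of indices with $a_j\ge\gamma_1$.

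One subtlety is that the last block may extend past $n$. I would handle this by working with $n'$ slightly larger than $n$, using the drift from Step 1 to control the overhang. Since $S_p\le -p\eta$ for large $p$, a bad block starting before $n$ ends before $n(1+O(C/\eta))$.

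\textbf{Step 3: conclude.} The number of bad times in $[0,n)$ is at most $c^{-1}\cdot\#\{j<n':a_j\ge\gamma_1\}\le c^{-1}\rho\, n'$. Choosing $\rho$ so that $c^{-1}\rho(1+O(C/\eta))<\tau$ gives a good-time frequency $>1-\tau$ along all large $n$. This is in fact a liminf, which is stronger than the limsup required.

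\textbf{Main obstacle.} The main obstacle is the boundary/overhang control in Step 2: making sure that bad blocks straddling $n$ do not carry too many bad indices. This is exactly where the Step 1 drift bound, valid only for large $n$ because of the liminf hypothesis, must be used carefully.
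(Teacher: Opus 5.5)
The paper gives no proof of this lemma; it only cites \cite[Appendix B]{CMY-22}. So there is nothing in the text to compare with step by step. Your rising-sun (maximal-inequality) argument is a correct, self-contained proof. It also yields an explicit $\rho$ and the stronger conclusion with $\liminf$ in place of $\limsup$. Two points should be written out exactly, since as stated they are a little loose.

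\textbf{Why $S_v>S_u$ on a bad block.} Let $[u,v)$ be a maximal block of bad times. Since $S_p\to-\infty$, the maximum $\max_{p\ge u}S_p$ is attained; let $v$ be the smallest index attaining it.
\begin{itemize}
\item $v>u$, because $u$ is bad.
\item $v$ is good.
\item Every $k\in(u,v)$ is bad: a good $k$ would satisfy $S_k\ge S_v$, which contradicts the minimality of $v$.
\end{itemize}
Hence $v$ is exactly the end of the block, and $S_v>S_u$. Your counting then shows that the block contains more than a fraction $(\gamma_2-\gamma_1)/(C-\gamma_1)\ge c$ of indices with $a_j\ge\gamma_1$.

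\textbf{The overhang.} Suppose $u<n\le v$ and $n$ is beyond the threshold after which $S_p\le-\eta p$. Since $S_u\ge -u(C+\gamma_2)>-2Cn$, you get $-\eta v\ge S_v>S_u>-2Cn$. Therefore $v<2Cn/\eta=4Cn/(\gamma_2-\gamma_1)$.

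The enlargement factor $n'/n$ is thus a fixed constant larger than $4$, not $1+$small. What matters is that it depends only on $\gamma_1,\gamma_2,C$. Consequently $\rho$ must satisfy two conditions:
\begin{itemize}
\item the drift condition $\rho\le(\gamma_2-\gamma_1)/\bigl(2(\gamma_2-\gamma_1+2C)\bigr)$;
\item the counting condition $\rho<\tau c\eta/(2C)$.
\end{itemize}
With these choices, the number of bad times in $[0,n)$ is at most $c^{-1}\#\{j<2Cn/\eta:a_j\ge\gamma_1\}$. Its $\limsup$ frequency is at most $c^{-1}\rho\cdot 2C/\eta<\tau$, so the good-time frequency has $\liminf>1-\tau$, as you claim.
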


\begin{proof}[Proof of Proposition \ref{Prop:SPR-FLow}]
The proof follows the approach developed in~\cite{BCS-SPR,BCS-CONTINUITY}.
Fix $\chi=h_{\rm top}(\varphi)/3$, $0<\varepsilon \ll \chi$ and $\tau>0$.
It suffices to show that for every sequence of ergodic measures $\{\nu_i\}_{i>0}$ satisfying $h_{\nu_i}(\varphi)\to h_{\rm top}(\varphi)$ and $\nu_i\to \mu$ for some measure $\mu$, there is $\ell>0$ such that $\nu_i({\rm PES}_{\chi,\ell}^{\varepsilon})>1-\tau$ for all sufficiently large $i$. 
Let $\gamma_1=\varepsilon/2$,  $\gamma_2=\varepsilon$, $C=\log \max\{\|D\varphi^1\|_{C^0},\|D\varphi^{-1}\|_{C^0}\}$ and 
$\tau'=(200C)^{-1}\varepsilon \tau$.
    
By the Ruelle inequality, $\nu_i$ has exactly one positive and one negative Lyapunov exponent.
By \cite[Theorem C]{BLY-ESPR}, there are $\ell_1,N_1\geq 1$ such that
\begin{equation}\label{eq:su-direction}
		\nu_i\left(\left\{x\in M:\begin{array}{l}\|D\varphi^{-n}|_{E^u(x)}\|\leq \ell_1 e^{-na\chi}\\
        \|D\varphi^{n}|_{E^s(x)}\|\leq \ell_1 e^{-n\chi}
        \end{array},\, \forall n>0\right\}\right)>1-\tau',\ \ \forall i>N_1. 
\end{equation}	
	
We now focus on estimating the behavior along the center direction.
By \cite{Newhouse-1989}, the entropy map is upper semi-continuous, hence $\mu$ is a measure of maximal entropy. In particular, $\mu$ is $\chi$-hyperbolic 
with $\mu(\sing)=0$.
Let $\rho:=\rho(\gamma_1,\gamma_2,C,\tau')>0$ given by Lemma \ref{Lem:Pliss-Like},
and take $L>0$ such that 
$$\mu\left(\left\{x\in M:e^{-\varepsilon L/2}<\|D\varphi^L|_{E^c(x)}\|<e^{\varepsilon L/2} \right\} \right)>1-\frac{\rho}{2}.$$
Since this latter set is open, $\exists N_2>N_1$ such that
$$\nu_i\left(\left\{x\in M:e^{-\varepsilon L/2}<\|D\varphi^L|_{E^c(x)}\|<e^{\varepsilon L/2} \right\}\right)>1-\rho,\ \ \forall i>N_2.$$
For each $x$, let $a_n(x):=\tfrac{1}{L}\log \|D\varphi^L|_{E^c(\varphi^{n-1}(x))}\|$.
By the Birkhoff ergodic theorem, for every $i>N_2$ there exists a set $\Lambda_i$  with $\nu_i(\Lambda_i)=1$ such that
$$\lim_{n\to \infty} \tfrac{1}{n} \# \left\{0\leq k<n: |a_k(x)|<\tfrac{\varepsilon}{2}\right\}>1-\rho,\ \ \forall x\in\Lambda_i.$$
Applying Lemma \ref{Lem:Pliss-Like}, for every $x\in \Lambda_i$ it holds
$$\lim_{n\to \infty} \frac{1}{n} \# \bigg\{0\leq k<n: -m\varepsilon<\sum_{j=0}^{m-1}a_{j+k}(x)<m\varepsilon,~\forall m>0\bigg\}>1-\tau'.$$
This implies that 
$$\nu_i\Big( \Big\{x\in M: e^{-mL\varepsilon}<\prod_{j=0}^{m-1} \|D\varphi^L|_{E^c(\varphi^j(x))}\| <e^{mL\varepsilon},~\forall m>0 \Big\} \Big)>
{1-\tau'}.$$
{Since $E^c$ has dimension one}, $\exists \ell_2:=\ell(L,C)$ such that
$$\ell_2^{-1} \cdot \|D\varphi^{m}|_{E^c(x)}\|^{L}\leq \prod_{j=0}^{m-1} \|D\varphi^L|_{E^c(\varphi^j(x))}\| \leq \ell_2 \cdot  \|D\varphi^{m}|_{E^c(x)}\|^{L},\ \ \forall m>0,$$
hence 
\begin{equation}\label{eq:c-direction}
		\nu_i\Big( \Big\{x\in M: \ell_2^{-1}  e^{-m\varepsilon}<\|D\varphi^m|_{E^c(x)}\| <\ell_2 e^{m\varepsilon},~\forall m>0 \Big\} \Big)>{1-\tau'}.
\end{equation}
Let $\ell=\max\{\ell_1,\ell_2\}$. 
Estimates \eqref{eq:su-direction}, \eqref{eq:c-direction} and \cite[Lemma 2.20]{BCS-SPR} then imply that for all $i>N_2$ it holds
$$\nu_i({\rm PES}_{\chi,\ell}^{\varepsilon})>1-\tfrac{100C}{\varepsilon} \tau'=1-\tfrac{\tau}{2}\cdot$$
This completes the proof of Proposition \ref{Prop:SPR-FLow}.
\end{proof}

\begin{proof}[Proof of Theorem \ref{Thm:finiteness-C-infty}]
By Proposition \ref{Prop:SPR-FLow}, there exists a Pesin block ${\rm PES}_{\chi,\ell}^{\varepsilon}$ such that $\mu({\rm PES}_{\chi,\ell}^{\varepsilon})>0.5$
for every ergodic measure  $\mu$ of maximal entropy.
By Proposition \ref{Prop:HC-PES}, there are finitely many pairwise non-homoclinically related ergodic measures of maximal entropy. 
By Theorem \ref{Thm:local-uniquess}, distinct ergodic measures of maximal entropy are not homoclinically related.
Hence $\vf$ has finitely many ergodic measures of maximal entropy.
\end{proof}

\medskip
\noindent
{\em{Acknowledgements.}} YL was supported by 
Instituto Serrapilheira grant
``Jangada Din\^{a}mica: Impulsionando Sistemas Din\^{a}micos na Regi\~{a}o Nordeste'', and
FAPESP grant number 2025/11400-7. DY was partially supported by National Key R\&D Program of China (2022YFA1005801), NSFC (12325106 \& 12526207), ZXL2024386 and Jiangsu Specially Appointed Professorship. CL is partially supported by NSFC 12501244.

\bigskip
\small
\bibliographystyle{plain-like-initial}
\bibliography{bib}

\bigskip
\bigskip
\bigskip

\hspace{-2.8cm}
\begin{tabular}{l l l l l}
\emph{J\'er\^ome Buzzi}
& &\emph{Sylvain Crovisier}
& &\emph{Yuri Lima}\\
Laboratoire de Math\'ematiques
&& Laboratoire de Math\'ematiques
&& Instituto de Matemática e Estatística\\
 d'Orsay, CNRS - UMR 8628
&&  d'Orsay, CNRS - UMR 8628
&&  Universidade de São Paulo (USP)\\
Universit\'e Paris-Saclay
&&  Universit\'e Paris-Saclay
&& Rua do Matão, 1010, Cid. Universitária\\
Orsay 91405, France
&& Orsay 91405, France
&& São Paulo -- SP, 05508-090, Brasil\\
\tt{jerome.buzzi}
&& \tt{sylvain.crovisier}
&&\tt{yurilima@gmail.com}\\
\; \tt{@universite-paris-saclay.fr}
&& \; \tt{@universite-paris-saclay.fr}
&&
\end{tabular}

\vspace{.5cm}
\begin{tabular}{l l l }
\emph{Chiyi Luo} &&\emph{Dawei Yang}\\
School of Mathematics and Statistics,
& & School of Mathematical Sciences\\
Jiangxi Normal University
& & Soochow University\\
Nanchang 330022, PR. China
& &
Suzhou 215006, PR. China\\
\tt{luochiyi98@gmail.com} && 
\tt{yangdw@suda.edu.cn} \\
\end{tabular}

\end{document}